% SIAM Article Template
\documentclass{article}

% Information that is shared between the article and the supplement
% (title and author information, macros, packages, etc.) goes into
% ex_shared.tex. If there is no supplement, this file can be included
% directly.

\usepackage{lipsum}
\usepackage{amsfonts}%
\usepackage{graphicx}
\usepackage{algorithm}
\usepackage{algpseudocode}  
\usepackage{amsmath}%
\usepackage{amssymb}%    
\usepackage{bbm}
\usepackage{color}
\usepackage{esint}
\usepackage{booktabs}
\usepackage{stmaryrd}
\usepackage{amsthm}
\usepackage{subcaption}%子标题宏包,并排图片有用

\usepackage{authblk}

% Prevent itemized lists from running into the left margin inside theorems and proofs
\usepackage{enumitem}
\setlist[enumerate]{leftmargin=.5in}
\setlist[itemize]{leftmargin=.5in}

% Add a serial/Oxford comma by default.

% Used for creating new theorem and remark environments
\newtheorem{remark}{Remark}

\newtheorem{assumption}{Assumption}
\newtheorem{theorem}{Theorem}
\newtheorem{definition}{Definition}
\newtheorem{lemma}{Lemma}
\newtheorem{corollary}{Corollary}
\newtheorem{proposition}{Proposition}

\usepackage{amsopn}

%自定义

\renewcommand{\(}{\left(}
\renewcommand{\)}{\right)}
  % Use Input in the format of Algorithm  
 % Use Output in the format of Algorithm 

\author{Chengyang Liu\thanks{Department of Mathematics, The University of Hong Kong ({cyliu00@connect.hku.hk}). C. Liu’s research was supported by a Postgraduate Scholarship (PGS).}, Michael K. Ng\thanks{Department of Mathematics, Hong Kong Baptist University ({michael-ng@hkbu.edu.hk}). M. Ng’s research was supported by the GDSTC: Guangdong and Hong Kong Universities “1+1+1” Joint Research Collaboration Scheme UICR0800008-24, National Key Research and Development Program of China under Grant 2024YFE0202900, RGC GRF 12300125 and Joint NSFC and RGC N-HKU769/21.}}

% Title. If the supplement option is on, then "Supplementary Material"
% is automatically inserted before the title.
\title{A Bijective Image Retargeting Algorithm Based on Conformal Energy\thanks{Submitted to the editors DATE.
% \funding{This work was funded by the Fog Research Institute under contract no.~FRI-454.}
}}
% The next statement enables references to information in the
% supplement. See the xr-hyperref package for details.

%\externaldocument[][nocite]{ex_supplement}

% FundRef data to be entered by SIAM
%<funding-group specific-use="FundRef">
%<award-group>
%<funding-source>
%<named-content content-type="funder-name"> 
%</named-content> 
%<named-content content-type="funder-identifier"> 
%</named-content>
%</funding-source>
%<award-id> </award-id>
%</award-group>
%</funding-group>
\providecommand{\keywords}[1]
{
  \small	
  \textbf{\text{Keywords: }} #1
}

\begin{document}

\captionsetup[figure]{labelfont={bf},name={Fig.},labelsep=period}

\maketitle
\keywords{Image retargeting, conformal energy, simplicial mapping, bijective mapping, convergence.}

% REQUIRED
\begin{abstract}
 Image retargeting, which resizes images to one with a prescribed aspect ratio by determining an optimal warping map, has gained substantial interest in imaging science. Despite significant advances, existing methods often fail to ensure bijective warping maps essential for preserving visual information. This paper introduces a novel bijective image retargeting model through conformal energy minimization of the deformation field. The proposed model establishes mathematical rigor by proving well-posedness for the optimal warping map in both continuous and discrete settings and showing that the discrete solutions converge to their continuous counterpart under mesh refinement. Numerical experiments corroborate the model's efficacy and the convergence of discrete solutions during progressive mesh subdivision processes, validating both theoretical guarantees and practical performance. 
\end{abstract}

% REQUIRED

\section{Introduction}
Nowadays, an increasing variety of electronic devices feature diverse screen aspect ratios. The problem of adapting images to various aspect ratios is called image retargeting. It may cause visual unnaturality and geometric information loss of the resulting images, by displaying one picture on different devices through simple stretching to the desired ratio. This situation gives rise to the need for more new image retargeting algorithms, which can resize images into different sizes while preserving some properties of the original images.

\subsection{Review and contributions}
\label{sec:review}
{
Image retargeting aims to adapt an image to a different aspect ratio while preserving its important content, limiting visual artifacts in the resulting media, and preserving internal structures of the original media \cite{rubinstein2010comparative}. Over the years, a multitude of algorithms have been proposed, which can be broadly classified into three primary paradigms: cropping-based, warping-based, and more recently, learning-based methods.}

{Cropping-based methods represent the most direct approach, achieving the target aspect ratio by removing pixels from the image and resetting the other pixels, e.g., Seam Carving algorithm \cite{avidan2007seam}, image retargeting algorithm using nonparametric semantic
segmentation \cite{razzaghi2015image}, shift-map image algorithm \cite{chen2016automatic}, and automatic rectangle cropping algorithm \cite{garg2022improved}.
 Taking Seam Carving as an example, it iteratively removes the least important seams. However, this process may destroy critical objects if seams inadvertently pass through essential areas. Fundamentally, the mapping in such methods is non-bijective, leading to irreversible information loss and the risk of removing important contextual elements.}

{Warping-based retargeting methods seek a continuous and smooth mapping, or a deformation field, to transform a source domain into a target one. 
A common strategy is to discretize the image domain with a quadrilateral \cite{wolf2007nonhomogeneous, niu2012image, kim2018quad} or triangular mesh \cite{lau2018image, xu2018content} and then compute the new vertex positions to approximate the desired warping map. 
Among these methods, conformal and quasi-conformal mappings are particularly effective for retargeting, as they provide excellent control over local geometric distortion. Recent quasi-conformal approaches \cite{lau2018image, xu2018content} rely on the linear Beltrami solver to efficiently compute the warping map from its associated Beltrami coefficient by solving a linear system. Despite its computational efficiency, this solver does not theoretically guarantee the bijectivity of the warping map, potentially leading to severe artifacts such as self-intersections. Therefore, a central challenge in warping-based retargeting remains the formulation of a model that is both computationally tractable and theoretically guarantees bijectivity.}

{
Learning-based methods mark a significant evolution in image retargeting. Their primary advantage lies in the ability to generate novel, contextually coherent pixels.  DeepIR \cite{lin2019deepir} performs pixel generating within a deep feature space. InGAN \cite{shocher2019ingan} demonstrates this by learning an image's internal patch distribution to synthesize new content. Vision Transformers like NaViT \cite{dehghani2023navit} introduce a "Patch n' Pack" mechanism that enables the network to process batches of images with arbitrary native resolutions and aspect ratios. However, this generative power comes with the significant drawback of potential "artificial hallucinations".
}

{
To address the central challenge of guaranteeing bijectivity in warping-based methods, this paper proposes a bijective image retargeting algorithm. 
In this paper, we propose a bijective image retargeting algorithm that can minimize the conformal energy of warping maps in the retargeting process while preserving the shapes of regions of interest and line structures in images. 
Then, the output of the energy-minimizing algorithm will be smoothed by the bijection correction algorithm, so the final warping map is bound to be bijective. Our contributions include a detailed theoretical analysis of the proposed model, establishing its mathematical soundness.
}

\subsection{Notation and outline}
The following notation is used in the paper. Other notation will be clearly defined whenever they appear.
\begin{itemize}
    \item Mathcal font letters, e.g., $\mathcal{D}_1$, $\mathcal{V}$, $\mathcal{O}$, denote various sets;
    \item Bold letters, e.g., $\mathbf{x}$, $\mathbf{v}$, $\mathbf{t}_{\mathcal{O}_i}$, denote vectors.
    \item $\tau = \left[ \mathbf{v}_0, ..., \mathbf{v}_l\right]$ denotes the $l$-simplex.
    \item $\left( L \right)_{i,j}$ denotes the $\left(i, j\right)$th entry of the matrix $L$.
\end{itemize}

This paper is organized as follows: In Section \ref{sec:1}, we build one mathematical model to illustrate the retargeting problem and introduce the conformal energy as the measure of local geometric distortions. In Section \ref{sec:2}, we discretize the retargeting problem into one optimization problem whose feasible region is in the simplicial mapping domain, and we also propose the bijection correction algorithm and an alternative initialization algorithm for the parameters and the boundary condition in the retargeting model. In Section \ref{sec:3}, we show the existence and uniqueness of the optimization problem in the continuous and discrete setting, and the optimal simplicial mapping in the discrete setting can be a promising approximation for the optimal mapping in the continuous setting if the mesh is dense enough; we also show that our main algorithm can ensure the final warping map is bijective. In Section \ref{sec:4}, we give some experimental results of our main algorithm and do some comparisons with other retargeting methods. All the proofs are put in appendices.

\section{Background}
\label{sec:1}
\subsection{Mathematical modeling}
We build up one mathematical model to illustrate the image retargeting problems.

Regions of interest (ROIs), usually the main objects in one image, are the critical parts that attract most people's attention at first sight.
Therefore, preserving the shapes of those objects entirely during the retargeting process is reasonable. 
Line structures are another important factor in images. The human visual system is sensitive to distortions except stretching, so we set the warping map on line structures as a simple stretching mapping with translation.

In this work, we focus on adjusting the image width to achieve the target aspect ratio. Denote that the region of original image is $\mathcal{D}_1 = [0,a] \times [0,b] \subset \mathbb{R}^2$,
and the region of output of retargeting algorithm is $\mathcal{D}_2 = [0,wa] \times [0,b]\subset \mathbb{R}^2$, where $w$ is the resizing ratio.
Suppose an image is defined as a mapping $I_1: \mathcal{D}_1 \rightarrow \mathbb{R}$. 
The target image is $I_2: \mathcal{D}_2 \rightarrow \mathbb{R}$.
The ROIs in $\mathcal{D}_1$ are $\{ \mathcal{O}_i \subseteq \mathcal{D}_1 \}^K_{i=1} $ and line structures are $\{ \mathcal{L}_j \subseteq \mathcal{D}_1\}^L_{j=1}$. 
Denote $\mathcal{O} := \bigcup_{i=1}^k \mathcal{O}_i$, $\mathcal{L} := \bigcup_{j=1}^l \mathcal{L}_j$, 
and $\Omega := \operatorname{int}\(\mathcal{D}_1 \backslash \(\mathcal{O} \cup \mathcal{L}\)\)$. 
Assume that $\mathcal{O}_1, \mathcal{O}_2, \dots, \mathcal{O}_K, \mathcal{L}_1, \dots, \mathcal{L}_L, \partial \mathcal{D}_1$ are disjoint sets.

The retargeting map is a bijective warping map $f: \mathcal{D}_1 \rightarrow \mathcal{D}_2$. The ROIs and line constraints can be illustrated in two formulas below,
\begin{equation}
  \begin{aligned}
    f \mid_{\mathcal{O}_i}\(\mathbf{x}\) &= r_{\mathcal{O}} \mathbf{x} + \mathbf{t}_{\mathcal{O}_i} \in \mathcal{D}_2,\\
    f \mid_{\mathcal{L}_j}\(\mathbf{x}\) &= R_{\mathcal{L}_j} \mathbf{x} + \mathbf{t}_{\mathcal{L}_j} \in \mathcal{D}_2,
  \end{aligned}
\end{equation}
in which $r_{\mathcal{O}}\in \mathbb{R}^+$, 
$R_{\mathcal{L}_j} = \(\begin{matrix}
  r^x_{\mathcal{L}_j} & \\
     &  r^y_{\mathcal{L}_j}
\end{matrix} \)$ is a 2$\times$2 diagonal matrix, whose diagonal elements are all positive, and $\mathbf{t}_{\mathcal{O}_i} = \(\begin{matrix}{t}^x_{\mathcal{O}_i} \\ {t}^y_{\mathcal{O}_i}\end{matrix}\), \mathbf{t}_{\mathcal{L}_j} = \(\begin{matrix}{t}^x_{\mathcal{L}_j}\\ {t}^y_{\mathcal{L}_j}\end{matrix}\) \in \mathbb{R}^2$, $i=1, 2, \dots, k$, $j=1, 2, \dots, l$.
These parameters are given in advance. For the boundary constraint, the boundary condition $g$ should be a bijective continuous map, which satisfies the boundary conditions $g\(0, 0\) = \(0, 0\), g\(a, 0\) = \(wa, 0\), g\(0, b\) = \(0, b\)$, and $g\(a, b\) = \(wa, b\)$.

Since the geometric distortion in $\Omega$ is invertible, we want to make the distortion distribution over $\Omega$ as even as possible.
Therefore, we introduce the conformal energy to our model to achieve this goal.

\subsection{Conformal Energy}
Given a bijective map $f: \mathcal{D}_1 \rightarrow \mathcal{D}_2,\, \left(x,y\right)\mapsto \left(u,v\right)$ such that $f\left(\mathcal{D}_1\right)=\mathcal{D}_2$ and $f\left(\partial \mathcal{D}_1\right) = \partial \mathcal{D}_2$, conformal energy functional $E^C\left( f\right)$ is the measure of local geometric distortion of function $f$. 
Conformal energy is defined by
\begin{equation}
  \begin{aligned}
  E^C\(f\) & =\frac{1}{2} \int_{\mathcal{D}_1}\left(\frac{\partial u}{\partial x}-\frac{\partial v}{\partial y}\right)^2+\left(\frac{\partial u}{\partial y}+\frac{\partial v}{\partial x}\right)^2 \\
  =&\frac{1}{2} \int_{\mathcal{D}_1}{\|\nabla f\|^2}-\operatorname{Area}\(f\(\mathcal{D}_1\)\)\\
  =& E^D\(f\) - \operatorname{Area}\(\mathcal{D}_2 \)\ge 0, 
  \end{aligned}
\end{equation}
where {$\|\nabla f\|$ represents the Frobenius norm of $\nabla f$,} $E^D \left(f\right) $ is the Dirichlet energy of $f$, and $\operatorname{Area}:  \omega \mapsto \int_{\omega}{dxdy}$ is the operator to map the set $\omega$ to its area. 
Especially, $E^C(f) = 0$ if and only if $f$ is a conformal mapping.

\section{The Bijective Retargeting Algorithm}
\label{sec:2}
\subsection{Minimizer in the continuous setting}
In the retargeting problem, the feasible region $\mathcal{R}$ of $E^C$ is set to be $\mathcal{R} =\{f \in H^1\(\mathcal{D}_1\) \mid f|_{\partial \mathcal{D}_1}(\mathbf{x}) = g(\mathbf{x});\, f \mid_{\mathcal{O}_i}\(\mathbf{x}\) = r_{\mathcal{O}} \mathbf{x} + \mathbf{t}_{\mathcal{O}_i}, i=1,\dots, k;\,f \mid_{\mathcal{L}_j}\(\mathbf{x}\) = R_{\mathcal{L}_j} \mathbf{x} + \mathbf{t}_{\mathcal{L}_j}, j=1,\dots,l\}$.
The parameters $r_{\mathcal{O}}, \mathbf{t}_{\mathcal{O}_i}, R_{\mathcal{L}_j}, \mathbf{t}_{\mathcal{L}_j}, g$ are all determined in advance. 
The optimization problem can be illustrated by the following formulas:
\begin{equation}
  \label{op:continuous}
  \begin{aligned}
    \min _{f\in H^1\left( \mathcal{D}_1 \right)}& -\operatorname{Area} \left( \mathcal{D}_2 \right)+\frac{1}{2}\int_{\mathcal{D}_1}{{\|\nabla f\|}^2}\\
    \text{s.t. } &f\mid _{\partial \mathcal{D}_1}=g,\\
    &f\mid _{\mathcal{O}_i}\left( \mathbf{x} \right) =r_{\mathcal{O}}\mathbf{x}+\mathbf{t}_{\mathcal{O}_i},\, i = 1, 2, \dots, k,\\
    &f\mid _{\mathcal{L}_j}\left( \mathbf{x} \right) =R_{\mathcal{L}_j}\mathbf{x}+\mathbf{t}_{\mathcal{L}_j},\, j = 1, 2, \dots, l.\\
  \end{aligned}
\end{equation}
Denote the minimizer in the continuous setting as $f^* \in \mathcal{R}$.

\subsection{Discretization}
In the image retargeting problem, $\mathcal{D}_1$ and $\mathcal{D}_2$ are two rectangles with different aspect ratios.
We discretize $\mathcal{D}_1$ with simply connected Delaunay triangulation mesh $M = \left<\mathcal{V}, \mathcal{E}, \mathcal{F}\right>$ to make it computable \cite{gu2008computational}.
$$
\mathcal{V}(M)=\left\{\mathbf{v}_{\ell}=\left(x_{\ell}, y_{\ell}\right) \in \mathcal{D}_1,\, \ell = 1, 2, \dots, n_v \right\}
$$
is the set of vertices of the mesh, $\mathcal{E}(M)$ is defined to be the edges set, and
$$
\mathcal{F}(M)=\left\{ \tau_{\ell} = \left[\mathbf{v}_{i_{\ell}}, \mathbf{v}_{j_{\ell}}, \mathbf{v}_{k_{\ell}}\right] \subseteq \mathcal{D}_1,\, \ell = 1, 2, \dots, n_f \right\}
$$
denotes the set of the simplices in the mesh, where 
$$
\begin{aligned}
   \left[\mathbf{v}_{i_{\ell}}, \mathbf{v}_{j_{\ell}}, \mathbf{v}_{k_{\ell}}\right]:= 
  \left\{\mathbf{v}^{\mathbf{\alpha}}_{\ell} = \alpha_i \mathbf{v}_{i_{\ell}}+\alpha_j \mathbf{v}_{j_{\ell}}+\alpha_k \mathbf{v}_{k_{\ell}} \mid
  \right.\left.
   \alpha_i+\alpha_j+\alpha_k=1; \alpha_i, \alpha_j, \alpha_k \geq 0 \right\}.
\end{aligned}
$$
The triangular faces cover the domain of mapping $f$, which means $\mathcal{D}_1 = \left|\sum\limits_{\ell=1}^{n_f} \tau_{\ell}\right|$ and $\partial \mathcal{D}_1 = |\partial M|$, where $|C|:=\{x\mid x\in\tau, \,\tau \text{ is a simplex in complex } C\}$.

Next, we define the discretized version of $\mathcal{O}$, $\mathcal{L}$, $\partial \mathcal{D}_1$, and $\Omega$. For ROIs part, we denote $$\mathcal{O}_i\(M\) = \left\{\mathbf{v}_{\ell} \mid \mathbf{v}_{\ell}\in \tau, \tau \in \mathcal{F}\(M\), \tau \cap \mathcal{O}_i \neq \emptyset \right\},$$ and 
$$\mathcal{O}\(M\) = \bigcup_{i=1}^k \mathcal{O}_i\(M\).$$
As for line structures, 
$$
\begin{aligned}
  \mathcal{L}_j\(M\) = \{\mathbf{v}_{\ell} \mid \mathbf{v}_{\ell}\in \tau \in \mathcal{F}\(M\), \operatorname{int}\(\tau\) \cap \mathcal{L}_j \neq \emptyset\}\\
  \cup \{\mathbf{v}_{\ell} \mid \mathbf{v}_{\ell}\in \left[\mathbf{v}_{\ell}, \mathbf{v}_{k}\right]\in \mathcal{E}\(M\), \left[\mathbf{v}_{\ell}, \mathbf{v}_{k}\right] \cap \mathcal{L}_j \neq \emptyset\},
\end{aligned}
$$ 
and 
$$\mathcal{L}\(M\) = \bigcup_{j=1}^L \mathcal{L}_j\(M\).
$$
The set of boundary vertices is $$\mathcal{B}\(M\) = \{\mathbf{v}_{\ell}\mid \mathbf{v}_{\ell}\in \partial \mathcal{D}_1\}.
$$
The set of remaining vertices is
$$\mathcal{I}\(M\) = \mathcal{V}\(M\)\setminus\(\mathcal{B}\(M\)\cup\mathcal{O}\(M\)\cup\mathcal{L}\(M\)\).$$
Suppose $$\mathcal{O}_1\(M\), \dots, \mathcal{O}_K\(M\), \mathcal{L}_1\(M\), \dots, \mathcal{L}_L\(M\), \mathcal{B}\(M\)$$ are disjoint sets. To visualize these conceptions, we provide a diagram in Figure~\ref{fig:dig}.
\begin{figure*}[htbp]
  \centering
  \label{fig:dig}
  \includegraphics[width=0.97\textwidth, keepaspectratio]{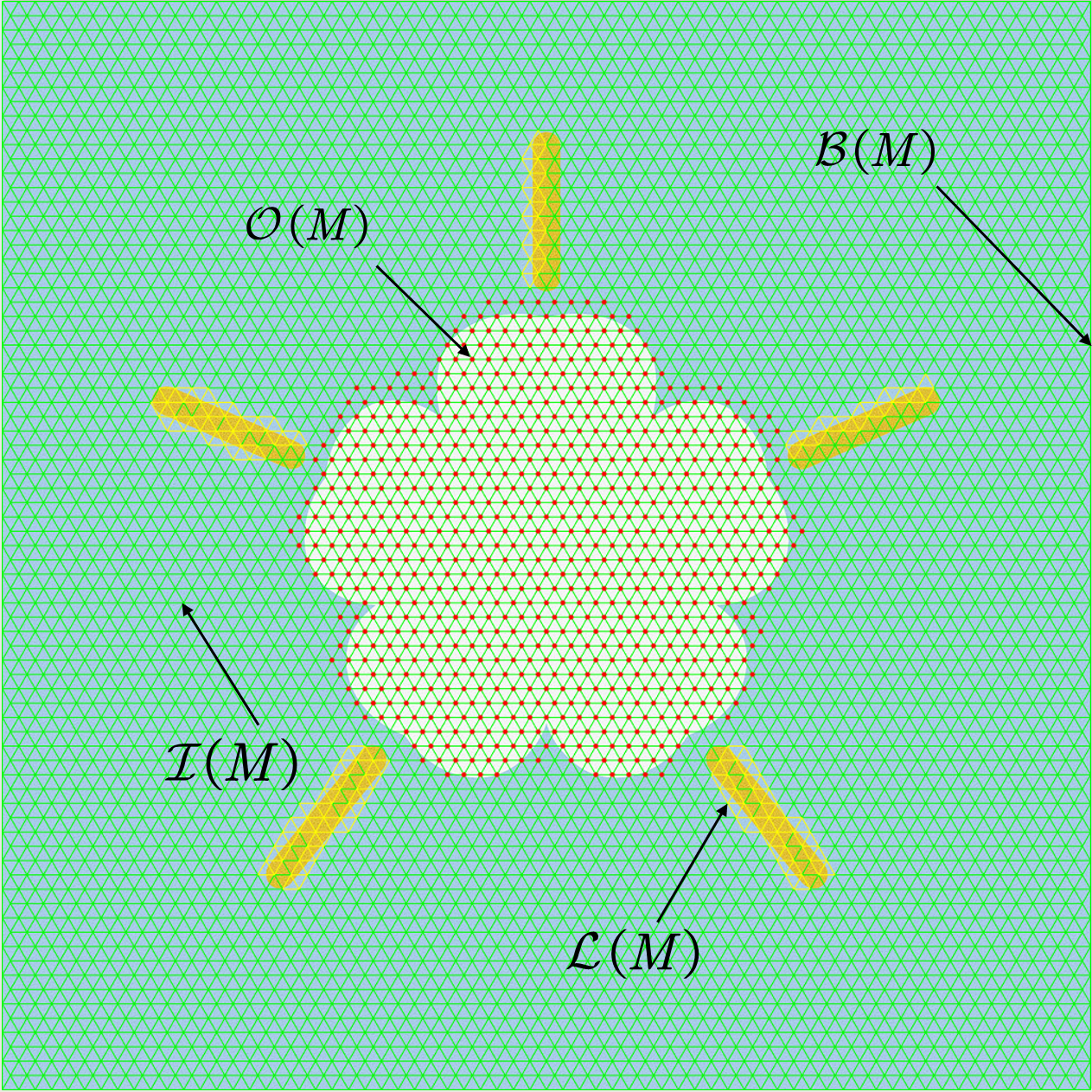}
\caption{A diagram to show the conceptions defined on mesh $M$.}

\end{figure*}
Therefore, we have the ROI, line structure, and boundary constraints on mesh:
\begin{equation}
  \label{equ:con}
  \begin{aligned}
    f \(\mathbf{v}_\ell \) &= r_{\mathcal{O}} \mathbf{v}_\ell  + \mathbf{t}_{\mathcal{O}_i}, \mathbf{v}_\ell \in \mathcal{O}_i\(M\)\\
    f \(\mathbf{v}_\ell \) &= R_{\mathcal{L}_j} \mathbf{v}_\ell  + \mathbf{t}_{\mathcal{L}_j}, \mathbf{v}_\ell \in \mathcal{L}_j\(M\)\\
    f \(\mathbf{v}_\ell \) &= g \(\mathbf{v}_\ell\) , \mathbf{v}_\ell \in \mathcal{B}\(M\)\\
  \end{aligned}
\end{equation}

Let the simplicial mapping $\Pi_Mf: \mathcal{D}_1 \rightarrow \mathcal{D}_2$ be the affine interpolant of $f$ on mesh $M$:
  \begin{equation}
    \begin{aligned}
      \Pi_M f\( \mathbf{v}^{\alpha}_{\ell} \) = \sum\limits_{p=i,j,k}{\alpha _p f\left( \mathbf{v}_{p_{\ell}}\right)} = \sum\limits_{p=i,j,k}{\alpha _p \(u_{p_\ell}, v_{p_\ell}\)}, \\
      \mathbf{v}^{\alpha}_{\ell} \in \tau_\ell \in \mathcal{F}\(M\), \(u_{p_\ell}, v_{p_\ell}\) = f\left( \mathbf{v}_{p_{\ell}}\right) \in \mathcal{D}_2.
    \end{aligned}
\end{equation}
  In other words, for the simplex ${\tau_\ell} = \left[ \mathbf{v}_{i_\ell}, \mathbf{v}_{j_\ell}, \mathbf{v}_{k_\ell} \right]$, 
  $\Pi_Mf\left(\mathbf{v}_{p_{\ell}}\right) = f\(\mathbf{v}_{p_{\ell}}\)$, where $p = i, j, k$
  and $\left.\Pi_M f\right|_{\tau_\ell} (x, y)=\left(\begin{array}{l}\partial_xu\mid_{\tau_\ell}  x + \partial_yu\mid_{\tau_\ell}  y + t^x_{\tau_\ell}  \\ \partial_xv\mid_{\tau_\ell}  x + \partial_yv\mid_{\tau_\ell}  y+t^y_{\tau_\ell} \end{array}\right)$, 
  in which $t^x_{\tau_\ell}, t^y_{\tau_\ell} \in \mathbb{R}$ are translation coefficients of $\tau_{\ell}$ and
  \begin{equation}
    \begin{aligned}
    \left( \begin{matrix}
      \partial_xu\mid_{\tau_\ell}&		\partial_xv\mid_{\tau_\ell}\\
      \partial_yu\mid_{\tau_\ell}&		\partial_yv\mid_{\tau_\ell}\\
    \end{matrix} \right) :=&\,\,\nabla \Pi _{M}f\mid_{\tau_\ell} = \left( \begin{matrix}
      \sum\limits_{p=i,j,k}{A_{p}^{\tau_\ell}u_{p_\ell}}&		\sum\limits_{p=i,j,k}{A_{p}^{\tau_\ell}v_{p_\ell}}\\
      \sum\limits_{p=i,j,k}{B_{p}^{\tau_\ell}u_{p_\ell}}&		\sum\limits_{p=i,j,k}{B_{p}^{\tau_\ell}v_{p_\ell}}\\
    \end{matrix} \right)
  \end{aligned}
  \end{equation}
  is the first derivative of $\Pi _{M}f$, where
  \begin{equation}
    \begin{aligned}
      &A_i^{\tau_\ell}=\left(y_j-y_k\right) / 2 \operatorname{Area}\({\tau_\ell}\),A_j^{\tau_\ell}=\left(y_k-y_i\right) / 2 \operatorname{Area}\({\tau_\ell}\),
      A_k^{\tau_\ell}=\left(y_i-y_j\right) / 2 \operatorname{Area}\({\tau_\ell}\); \\
      &B_i^{\tau_\ell}=\left(x_k-x_j\right) / 2 \operatorname{Area}\({\tau_\ell}\),
      B_j^{\tau_\ell}=\left(x_i-x_k\right) / 2 \operatorname{Area}\({\tau_\ell}\),  B_k^{\tau_\ell}=\left(x_j-x_i\right) / 2 \operatorname{Area}\({\tau_\ell}\).
    \end{aligned}
  \end{equation}

Discrete conformal energy on mesh $M$ is defined as 
\begin{equation}
  \begin{aligned}
    E^C_{M}(f) &= E^C\(\Pi_{M}f\) = -\operatorname{Area}(\mathcal{D}_2) + \tfrac{1}{2}\sum\limits_{\tau _{\ell}\subseteq \mathcal{D}_1}{\| \nabla \Pi _{M}f|_{{\tau_\ell}} \|^2\operatorname{Area} \left( \tau_{\ell} \right)}\ge 0.
  \end{aligned}
\end{equation}

\subsection{Minimizer in the discrete setting}
In the discrete setting, we want to get the optimal simplicial mapping on mesh $M$.
Denote the discrete feasible region as $\mathcal{S}_{M} = \{f\in H^1\(\mathcal{D}_1\) \mid f = \Pi_{M}f;\,\,
  f \(\mathbf{v}_\ell \) = r_{\mathcal{O}} \mathbf{v}_\ell  + \mathbf{t}_{\mathcal{O}_i},\, \mathbf{v}_\ell \in \mathcal{O}_i\(M\), \, i = 1, 2, \dots, k;\,\,
  f \(\mathbf{v}_\ell \) = R_{\mathcal{L}_j} \mathbf{v}_\ell  + \mathbf{t}_{\mathcal{L}_j},\, \mathbf{v}_\ell \in \mathcal{L}_j\(M\),\, j = 1, 2, \dots, l;\,\,
  f \(\mathbf{v}_\ell \) = g \(\mathbf{v}_\ell\) ,\, \mathbf{v}_\ell \in \mathcal{B}\(M\) \}$.
  Denote that $n_v = \operatorname{Card} \left(\mathcal{V}\left(M\right)\right)$, and $n_i = \operatorname{Card} \left(\mathcal{I}\left(M\right)\right)$.
The simplicial mapping $f\in\mathcal{S}_{M}$ can be written in matrix form: 
$\mathbf{f} = \left( f\left( \mathbf{v}_1 \right)^{\top}, f\left( \mathbf{v}_2 \right)^{\top}, \dots, f\left( \mathbf{v}_{n_{i}} \right)^{\top}, \dots, f\left( \mathbf{v}_{n_{v}} \right)^{\top} \right)^{\top}$,
in which $ f\( \mathbf{v}_i\) = \(u_i, v_i\)$, $i = 1, 2, \dots, n_v$; $\mathbf{v}_1, \mathbf{v}_2, \dots, \mathbf{v}_{n_i} \in \mathcal{I}\(M\)$ and $\mathbf{v}_{n_i+1}, \dots, \mathbf{v}_{n_v} \in \mathcal{V}\(M\) \setminus \mathcal{I}\(M\)$.

In fact, discrete conformal energy $E^C_{M}(f)$ is a quadratic polynomial with respect to $\{\(u_{\ell},v_{\ell}\) = f\(\mathbf{v}_{\ell}\)\mid \mathbf{v}_{\ell}\in \mathcal{I}\(M\)\}$,
which means we can solve a system of linear equations to find the optimized simplicial mapping.
This ensures that the computational complexity of the algorithm is acceptable for real-time computing.

The linear equations are listed below:
\begin{equation}
  \begin{aligned}
  &\left\{ \begin{array}{r}
    0 = \tfrac{\partial E_{M}^{C}(\mathbf{f})}{\partial u_i}=\sum\limits_{\tau \in \mathcal{N} _i}{\operatorname{Area} \left( \tau \right) \(A_{i}^{\tau}\partial _xu\mid _{\tau}+B_{i}^{\tau}\partial _yu\mid _{\tau}\)}\\
    0 = \tfrac{\partial E_{M}^{C}(\mathbf{f})}{\partial v_i}=\sum\limits_{\tau \in \mathcal{N} _i}{\operatorname{Area} \left( \tau \right) \(A_{i}^{\tau}\partial _xv\mid _{\tau}+B_{i}^{\tau}\partial _yv\mid _{\tau}\)}\\
  \end{array} \right., \\
  &\text{for } \mathbf{v}_i = \(u_i, v_i\) \in \mathcal{I}\(M\), i = 1, 2, \dots, n_i,
\end{aligned}
\end{equation}
where $\mathcal{N}_i = \{\tau \in \mathcal{F}\(M\)\mid \mathbf{v}_i\in\tau\}$.

Then, the equations can be presented as follows:
\begin{equation}
  L \mathbf{f} = \mathbf{O},
\end{equation}
where $L$ is a $n_i \times n_v$ matrix, whose entries satisfy
\begin{equation}
  \left(L \right)_{i, j} = \left\{ \begin{aligned} 
    \sum\limits_{\tau \in \mathcal{N}_i \text{ and } \left[i, j \right]\subset \tau }{\operatorname{Area} \left( \tau \right) \left( A_{i}^{\tau}A_{j}^{\tau}+B_{i}^{\tau}B_{j}^{\tau} \right)},  \quad &\text{if }\left[i, j\right] \in \mathcal{E}, j \neq i; \\ 
     \sum\limits_{\tau \in \mathcal{N}_i}{\operatorname{Area} \left( \tau \right) \left( A_{i}^{\tau} A_{i}^{\tau} + B_{i}^{\tau} B_{i}^{\tau} \right)}, \quad &\text { if } j=i; \\
     0, \quad &\text { otherwise; }\end{aligned}\right.
\end{equation}
and $\mathbf{O}$ is a $n_i \times 2$ zero matrix.

Denote that $\mathbf{f} = \(\mathbf{f}_a^{\top}, \mathbf{f}_b^{\top}\)^\top$, where $\mathbf{f}_a = \(f\left( \mathbf{v}_1 \right)^{\top}, f\left( \mathbf{v}_2 \right)^{\top}, \dots, f\left( \mathbf{v}_{n_i} \right)^{\top}\)^{\top}$
and $\mathbf{f}_b = \(f\left( \mathbf{v}_{n_i+1} \right)^{\top}, f\left( \mathbf{v}_{n_i+2} \right)^{\top}, \dots, f\left( \mathbf{v}_{n_v} \right)^{\top}\)^{\top}$.
Correspondingly, the matrix $L$ can be written in the form of a block matrix:
$
  L = \left[\begin{matrix}
	L^a&		L^b\\
\end{matrix} \right]
$,
where $L^a$ is the first $n_i$ columns of $L$,
so we have
\begin{equation}
\label{eq:linear}
  L^a \mathbf{f}_a = -  L^b \mathbf{f}_b.
\end{equation}
If we can prove $L^a$ is inevitable, we can ensure the uniqueness and existence of the minimizer in the discrete setting, which will be talked about later. Denote the minimizer in the discrete setting as $f^*_{M} \in \mathcal{S}_{M}$.

\subsection{Bijection correction algorithm}
In some cases, particularly with complex constraints or extreme ratios, the optimal simplicial mapping $f_{M}$ may not be a bijective function due to low mesh accuracy or extreme constraints.
To ensure the warping map is bijective and orientation-preserving, we introduce the bijection correction of simplicial mapping, 
which will relax the ROI and line constraints to ensure the bijection at the expense of the shape of ROIs and line structures.

We initialize the constraints relaxation set as $\mathcal{N}^0 = \{\mathbf{v} \in \mathcal{V}(M)\cap \tau \mid \det \left( \nabla f_M \mid _{\tau} \right) < 0\}$ 
and the iterative step is $\mathcal{N}^{k+1} = \{\mathbf{v} \in \mathcal{V}(M)\mid \left[ \mathbf{v},\, \mathbf{w}\right]\in \mathcal{E}(M), \mathbf{w}\in \mathcal{N}^{k}\}$. 
If the output of the conformal energy minimizer $f^*_M$ is not orientation-preserving, we will start our iterative bijection correction algorithm. 
The corrected function $\phi^k_M$ is generated by equations:
\begin{equation}
  \label{equ:bi}
  \left\{ 
    \begin{aligned}
      \phi^k_M \(\mathbf{v}_\ell \) = r_{\mathcal{O}} \mathbf{v}_\ell  + \mathbf{t}_{\mathcal{O}_i},&\,   \text{ for } \mathbf{v}_\ell \in \mathcal{O}_i\(M\)\setminus\mathcal{N}^k\\
      \phi^k_M \(\mathbf{v}_\ell \) = R_{\mathcal{L}_j} \mathbf{v}_\ell  + \mathbf{t}_{\mathcal{L}_j},&\,  \text{ for } \mathbf{v}_\ell \in \mathcal{L}_j\(M\)\setminus\mathcal{N}^k\\
      \phi^k_M \(\mathbf{v}_\ell \) = g \(\mathbf{v}_\ell\) ,&\, \text{ for } \mathbf{v}_\ell \in  \mathcal{B}\(M\)\\
      \mathbf{0} = \tfrac{\partial E^C_M}{\partial \phi^k_M (\mathbf{v}_\ell)},&\, \text{ otherwise}.
    \end{aligned}\right.
\end{equation}
Then, we will check whether $\phi^k_M$ is orientation-preserving. If so, we will set it as the final output; otherwise, we will go to another step of the iterative method.

\subsection{ROIs, line structures and boundary condition}
ROIs and line structures can be either manually marked or automatically detected. 
For ROI detection, the saliency map is a powerful tool \cite{itti1998model, harel2006graph, ren2009image}. We can set a threshold to segment regions with saliency values exceeding it as ROIs. \cite{mukhopadhyay2015survey}. Using a more advanced network (e.g., \cite{ravi2024sam}) for ROI detection gives better results.
Line structures can be realized by Hough transform \cite{mukhopadhyay2015survey} or other line-detecting neural network (e.g., \cite{zhou2019end}).

The parameters in the model $r_{\mathcal{O}}, R_{\mathcal{L}_j}, \mathbf{t}_{\mathcal{O}_i}, \mathbf{t}_{\mathcal{L}_j}$ and boundary condition $g$ can be fixed in advance or calculated by some algorithm.
Here, we give an initialization method to determine these parameters.

We can use the least square method to find the constrained map, which is the closest one to the discrete harmonic map. 
In other words, we can initialize these parameters by computing the approximate solution to the overdetermined system: 

\begin{equation}
  \label{equ:roi}
  \begin{aligned}
  &\left\{ \begin{array}{l}
    \mathbf{0} = \tfrac{\partial E_{M}^{C}(\mathbf{f})}{\partial f\(\mathbf{v}_\ell\) }, \,\text{ for } \mathbf{v}_\ell \in \mathcal{V}\( M\)\setminus \mathcal{B}\(M\);\\
    0 = \tfrac{\partial E_{M}^{C}(\mathbf{f})}{\partial f\(\mathbf{v}_\ell\) } \cdot \mathbf{n}\(f\(\mathbf{v_\ell}\)\), \,\text{for } \mathbf{v}_\ell \in \mathcal{B}\(M\)\setminus \{\(0, 0\),  \(a, 0\), \(0, b\), \(a, b\)\};\\
    f\(0, 0\) = \(0, 0\), \,f\(a, 0\) = \(wa, 0\), \,f\(0, b\) = \(0, b\),\, f\(a, b\) = \(wa, b\);\\
  \end{array} \right.
  \end{aligned}
\end{equation}
where $\mathbf{n} \(f \( \mathbf{v}_\ell \)\)$ is the normal vector of $\partial \mathcal{D}_2$ at $f\(\mathbf{v}_\ell\)$, and
$$
\left\{
\begin{aligned}
  f \(\mathbf{v}_\ell\) &= r_{\mathcal{O}} \mathbf{v}_\ell + \mathbf{t}_{\mathcal{O}_i},\, \text{ for }  \mathbf{v}_\ell\in \mathcal{O}_i\(M\),\\
  f \(\mathbf{v}_\ell\) &= R_{\mathcal{L}_j} \mathbf{v}_\ell + \mathbf{t}_{\mathcal{L}_j}, \,\text{ for } \mathbf{v}_\ell\in \mathcal{L}_j\(M\).\\
\end{aligned}
\right.
$$
then, we can set the boundary condition as $g \(\mathbf{v}_\ell \) = f\(\mathbf{v}_\ell\), \mathbf{v}_\ell \in \mathcal{B}\(M\)$.

\subsection{The main algorithm}
The body of our bijective retargeting algorithm is as follows: 
Firstly, we use simplicial mapping to approximate the conformal energy minimizer in the continuous setting. 
Next, we need to do the bijection correction to the simplicial mapping if it is not orientation-preserving. 
Then we can get the bijective warping map we want. 
In the case the parameters $r_{\mathcal{O}}, \mathbf{t}_{\mathcal{O}_i}, R_{\mathcal{L}_j}, \mathbf{t}_{\mathcal{L}_j}, g$ are not given, we can initialize these parameters by solving (\ref{equ:roi}).
\begin{algorithm}[htbp]  
  \caption{The main algorithm}  
  \label{alg:Main}
  \begin{algorithmic}[1]
    \Require  
    Initial image $I_1$; domain $\mathcal{D}_1$ and codomain $\mathcal{D}_2$; ROIs $\mathcal{O}_i, i = 1, 2, ..., k$; line structures $\mathcal{L}_j, j=1, 2, \dots, l$; parameters $r_{\mathcal{O}}, \mathbf{t}_{\mathcal{O}_i}, R_{\mathcal{L}_j}, \mathbf{t}_{\mathcal{L}_j}$ and bijective boundary condition $g$ (optimal).
    \Ensure  
    Resized image $I_2$ with prescribed ratio.
    \If{$r_{\mathcal{O}}, \mathbf{t}_{\mathcal{O}_i}, R_{\mathcal{L}_j}, \mathbf{t}_{\mathcal{L}_j}, g$ are not given}
      \State Initialize them by solving (\ref{equ:roi});
    \EndIf
    \State Compute the simplicial approximation $\mathbf{f}^*_{M}$ of $f^*$ on mesh $M$;
    \If {the approximation $\mathbf{f}^*_{M}$ is not orientation-preserving}
      \State Compute $\phi_M^{0}$ by solving (\ref{equ:bi});
      \State Set $k = 1$;
      \While{$\phi^{k-1}_M$ is not orientation-preserving}
          \State Compute $\phi^{k}_M$ by solving (\ref{equ:bi});
          \State $k = k + 1$;
      \EndWhile
    \EndIf
    \State Denote the output of bijection correction algorithm as $\phi^*_M$;
    \State Compute $I_2$ by $I_2 = I_1 \circ \(\phi^*_{M}\)^{-1}$;\\
    \Return The image $I_2$.
  \end{algorithmic}
\end{algorithm}

\section{Analysis of Model}
\label{sec:3}
\subsection{Preliminaries}
\subsubsection{Simplicial interpolation in Sobolev space}
\begin{theorem}[Theorem 1 in \protect\cite{van2014approximation}]
  \label{thm:app}
  Let $u \in L^q\left(\mathbb{R}^n\right)$. If $u$ is weakly differentiable and if $D u \in L^p\left(\mathbb{R}^n\right)$, then for every $\varepsilon>0$ there exists a triangulation $M$ of $\mathbb{R}^n$ such that
$$
\int_{\mathbb{R}^n}\left|D\left(u-\Pi_{M} u\right)\right|^p+\int_{\mathbb{R}^n}\left|u-\Pi_{M} u\right|^q \leq \varepsilon.
$$
\end{theorem}
This theorem also applies to vector-valued functions.
It shows that the simplicial interpolation can be a promising approximation for the mappings in $H^1$ space if the mesh is appropriately selected, which gives the theoretical foundation of triangular mesh interpolation of warping maps. In other words, there exists a sequence of meshes $\{M_k\}_{k\in\mathbb{N}}$ such that $\lim_{k\rightarrow \infty}{\Pi_{M_k}{f}} = f$ with respect to $H^1$ norm for any $f\in \mathcal{R}$.

\subsubsection{Subdivision of simplicial complex}
\begin{definition}[Subdivision of simplicial complex, \cite{rourke2012introduction}]
  Simplicial complex $J$ is a subdivision of another simplicial complex $K$, written $J \triangleleft K$, if $|J|=|K|$ and each cell of $J$ is contained in a cell of $K$.
\end{definition}

Subdivision is partitioning simplices of the original complex into smaller simplices. All the vertices of the original mesh are also included in the subdivision mesh. The most commonly used subdivision is the barycentric subdivision.

Denote the original mesh be $M_0 := M$. We can have a sequence of subdivision meshes $\{M_k\}_{k \in \mathbb{N}}$ such that $M_{k+1} \triangleleft M_{k}$, $k \in \mathbb{N}$.

\subsubsection{Assumptions}
In order to analyze the model, we shall impose some assumptions first.
\begin{assumption}
  \label{ass:omega}
$\Omega$ is a bounded set with Lipschitz boundary.
\end{assumption}
\begin{remark}
  This assumption always holds if the boundaries of ROIs are sufficiently regular.
\end{remark}

\begin{assumption}
  \label{ass:mk}
  For $k \in \mathbb{N}^+$, $M_k$ is a simply connected Delaunay mesh.
\end{assumption}

\begin{remark}
This assumption is related to the existence and uniqueness of discrete minimizer on the mesh $M_k$.
\end{remark}

\begin{assumption}
  Suppose for any $k\in\mathbb{N}$, $M_k$ and boundary condition $g$ satisfy the following conditions:\\
  $$
  \left\{
    \begin{aligned}
      \mathcal{O}_i = \left|\sum\limits_{\tau \in \mathcal{O}_i\(\mathcal{F}\(M_k\) \)}{\tau} \right|, \,&i = 1, 2, \dots, k;\\
      \mathcal{L}_j = \left|\sum\limits_{\left[ \mathbf{v}_\ell,\mathbf{v}_m\right]\in \mathcal{L}_j\(\mathcal{E}\(M_k\)\)}{\left[ \mathbf{v}_\ell,\mathbf{v}_m\right]} \right|,  \, &j = 1, 2, \dots, l;\\
      g\(\alpha_1 \mathbf{v}_l + \alpha_2 \mathbf{v}_m \) = \alpha_1 g\(\mathbf{v}_l\) + \alpha_2 g\(\mathbf{v}_m\),\, &\left[\mathbf{v}_l,\mathbf{v}_m\right] \in \mathcal{E} \(M_k\) \cap \partial \mathcal{D}_1, \alpha_1, \alpha_2\ge0, \alpha_1 + \alpha_2 =1 ;
    \end{aligned}
  \right.
  $$
  where $\mathcal{O}_i\(\mathcal{F}\(M_0\) \) :=\{\tau= \left[\mathbf{v}_l,\mathbf{v}_m,\mathbf{v}_n\right] \mid \mathbf{v}_l, \mathbf{v}_m, \mathbf{v}_n\in \mathcal{O}_i\(M_k\)\}$ 
  and $\mathcal{L}_j\(\mathcal{E}\(M_k\)\) := \{\left[ \mathbf{v}_\ell,\mathbf{v}_m\right] \in\mathcal{E}\(M_k \) \mid \mathbf{v}_\ell, \mathbf{v}_m\in \mathcal{L}_j\(M_k\)\}$.
  \label{ass:roi}
\end{assumption}

\begin{remark}
If this assumption holds, we can have $\Pi_{M_k}f \in \mathcal{S}_{M_k}$, so we obtain an embedding from the set $\mathcal{R}$ to the set $\mathcal{S}_{M_k}$ in the sense of interpolation equivalence classes.
\end{remark}

By Theorem \ref{thm:app}, we can know that for any mappings in $\mathcal{R}$, there exists a simplicial interpolation which is close enough to itself with respect to $H^1$ distance. 
If the mesh $M_0$ and the subdivision method is appropriately selected, the following assumption is reasonable.
\begin{assumption}
  \label{ass}
  The minimizer in the continuous setting $f^*$ satisfies $f^* \in \overline{\mathcal{S}}$, in which $$\mathcal{S} := \bigcup_{k\in\mathbb{N}} \mathcal{S}_k := \bigcup_{k\in\mathbb{N}} \mathcal{S}_{M_k}.$$
\end{assumption}

\begin{assumption}
  \label{ass:co}
  No quadrilateral of $M$ has cocircular vertices.
\end{assumption}

\begin{remark}
    The assumption above means for any two triangles sharing a common edge, the sum of their opposite angles is not equal to $\pi$. All meshes generated by the Delaunay triangulation that contain no right angles satisfy the assumption, including the most widely used isometric grid.
    
\end{remark}

\subsection{Existence and uniqueness of the minimizer}
Since $f\(\mathcal{D}_1\)$, $f\mid_{\mathcal{O}}$, and $f\mid_{\mathcal{L}}$ are fixed, the constrained optimization problem (\ref{op:continuous}) can be translated into another form:
\begin{equation}
  \begin{aligned}
    \min_{f\in H^1\(\Omega\)} &{E^D_\Omega\(f\):=\frac{1}{2} \int_{\Omega}{\|\nabla f\|^2}}\\
    \text{s.t. }&f\mid _{\partial \mathcal{D}_1}=g,\\
    &f\mid _{\partial \mathcal{O}_i}\left( \mathbf{x} \right) =r_{\mathcal{O}}\mathbf{x}+\mathbf{t}_{\mathcal{O}_i},\, i = 1, 2, \dots, k,\\
    &f\mid _{\partial \mathcal{L}_j}\left( \mathbf{x} \right) =R_{\mathcal{L}_j}\mathbf{x}+\mathbf{t}_{\mathcal{L}_j},\, j = 1, 2, \dots, l,
  \end{aligned}
\end{equation}
where $E^D_\Omega$ equals $E^C$ plus a constant.

\begin{theorem}[Alaoglu's Theorem, \cite{conway2019course}]
  \label{thm:banach}
If $X$ is a normed space, then the closed unit ball of $X^*$ is weak-star compact.
\end{theorem}

\begin{theorem}[Rellich-Kondrachov theorem, \cite{evans2010partial}]
  \label{thm:rel}
  Let $\Omega \subset \mathbb{R}^N, N\in \mathbb{N}$ be a bounded Lipschitz domain, the embedding $H^1(\Omega) \hookrightarrow L^2(\Omega)$ is compact and continuous.
\end{theorem}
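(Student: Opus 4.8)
The plan is to prove the two assertions separately, treating continuity as immediate and reserving the real work for compactness. Since $\Omega$ is bounded and $\|u\|_{L^2(\Omega)}^2 \le \|u\|_{L^2(\Omega)}^2 + \|\nabla u\|_{L^2(\Omega)}^2 = \|u\|_{H^1(\Omega)}^2$, the inclusion map is bounded with operator norm at most $1$, which already establishes continuity. The substantive claim is that the inclusion is \emph{compact}, i.e. that every sequence $\{u_n\} \subset H^1(\Omega)$ with $\|u_n\|_{H^1(\Omega)} \le M$ admits a subsequence converging in $L^2(\Omega)$.

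To handle compactness I would pass to the whole space and invoke the Fr\'echet--Kolmogorov--Riesz criterion. First, using that $\Omega$ is a bounded Lipschitz domain, I would invoke a bounded linear extension operator $E : H^1(\Omega) \to H^1(\mathbb{R}^N)$ satisfying $(Eu)|_\Omega = u$ and $\|Eu\|_{H^1(\mathbb{R}^N)} \le C \|u\|_{H^1(\Omega)}$, then multiply by a fixed cutoff $\chi \in C_c^\infty(\mathbb{R}^N)$ equal to $1$ on $\Omega$ so that the extended functions $\tilde u_n := \chi\, Eu_n$ all have support in a fixed ball $B \supset \Omega$ while retaining the uniform bound $\|\tilde u_n\|_{H^1(\mathbb{R}^N)} \le C' M$. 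The family $\{\tilde u_n\}$ is then uniformly bounded in $L^2(B)$ and, since every support lies in $B$, automatically tight.

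The remaining ingredient is equicontinuity of translations, which I would extract from the gradient bound. For $v \in C_c^\infty(\mathbb{R}^N)$ the fundamental theorem of calculus gives $v(x+h)-v(x) = \int_0^1 \nabla v(x+th)\cdot h \, \dif t$, whence by Cauchy--Schwarz, Fubini, and translation invariance of Lebesgue measure one obtains $\|v(\cdot+h)-v\|_{L^2(\mathbb{R}^N)} \le |h|\, \|\nabla v\|_{L^2(\mathbb{R}^N)}$; density of $C_c^\infty$ in $H^1(\mathbb{R}^N)$ then extends this to each $\tilde u_n$. Hence $\sup_n \|\tilde u_n(\cdot + h) - \tilde u_n\|_{L^2} \le |h|\, C' M \to 0$ as $h \to 0$, which is exactly the uniform smallness of translates demanded by Fr\'echet--Kolmogorov. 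Applying that criterion yields relative compactness of $\{\tilde u_n\}$ in $L^2(B)$; extracting a convergent subsequence and restricting to $\Omega$ produces the desired $L^2(\Omega)$-convergent subsequence, completing the argument.

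I anticipate the \textbf{main obstacle} to be the construction of the bounded extension operator $E$: this is precisely the point at which the Lipschitz regularity of $\partial\Omega$ is indispensable, since a merely bounded open set need not admit such an extension and the conclusion can then fail. Establishing $E$ rigorously requires the Stein/Calder\'on reflection-and-flattening machinery near the boundary, so if a self-contained proof is wanted I would either cite this extension theorem or replace the extension step by a direct mollification argument: approximate each $\tilde u_n$ by $\tilde u_n * \rho_\varepsilon$, use that the mollified family is equi-Lipschitz and uniformly bounded on $B$ to invoke Arzel\`a--Ascoli, and control the error $\|\tilde u_n * \rho_\varepsilon - \tilde u_n\|_{L^2} \le \varepsilon\, \|\nabla \tilde u_n\|_{L^2}$ uniformly in $n$ via the same translation estimate.
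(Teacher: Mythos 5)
The paper offers no proof of this statement: Rellich--Kondrachov is quoted as a classical background theorem (alongside Banach--Alaoglu, Theorem \ref{thm:banach}) and is used only as an ingredient in the proof of Theorem \ref{thm:con}, so there is no internal argument to compare yours against. Judged on its own, your proposal is the standard textbook proof and is essentially correct: continuity is indeed immediate from $\|u\|_{L^2(\Omega)} \le \|u\|_{H^1(\Omega)}$, and your compactness chain --- a Calder\'on/Stein extension operator for Lipschitz domains, a cutoff forcing all supports into a fixed ball, the translation estimate $\|v(\cdot+h)-v\|_{L^2} \le |h|\,\|\nabla v\|_{L^2}$ proved on $C_c^\infty$ and extended by density, and finally the Fr\'echet--Kolmogorov--Riesz criterion --- is precisely how the result is established in standard references (Brezis, Adams, Evans). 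You are also right to flag that the Lipschitz hypothesis enters only through the extension operator and that compactness can genuinely fail for a rough bounded open set, which is the honest weak point of any self-contained treatment. One small inaccuracy in your closing remark: the mollification-plus-Arzel\`a--Ascoli fallback does not ``replace the extension step''; it replaces the Fr\'echet--Kolmogorov criterion, since the functions $\tilde u_n$ it mollifies are still the extended ones. The extension operator (or a retreat to $H_0^1$, where extension by zero costs nothing) remains indispensable in either variant, so that citation cannot be dispensed with.
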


\begin{theorem}[Existence and uniqueness of the minimizer in the continuous setting]
  \label{thm:con}
If Assumption \ref{ass:omega} holds, there exists one and only one minimizer $f^* \in \mathcal{R}$ of conformal energy $E^C$.
\end{theorem}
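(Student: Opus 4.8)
The plan is to apply the direct method of the calculus of variations to the reformulated problem on $\Omega$, using exactly the three ingredients already assembled: strong convexity (Proposition \ref{pro:strong}), weak-$*$/weak compactness of bounded sets (Theorem \ref{thm:banach}), and the compact embedding $H^1(\Omega)\hookrightarrow L^2(\Omega)$ (Theorem \ref{thm:rel}). Because $E^C$ and $E^D_\Omega$ differ only by the fixed constant $\mathcal{A}(D_2)$ once $f|_{\mathcal{O}}$ and $f|_{\mathcal{L}}$ are prescribed, a minimizer of one over $\mathcal{R}$ is a minimizer of the other; so it suffices to produce a unique minimizer of the reformulated functional $E^D_\Omega(f)=\tfrac12\int_\Omega|\nabla f|^2$ over the feasible set.

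First I would record the structure of $\mathcal{R}$. The defining conditions $f|_{\partial D_1}=g$, $f|_{\partial O_i}(\mathbf{x})=r_O\mathbf{x}+\mathbf{t}_{O_i}$, and $f|_{\partial l_j}(\mathbf{x})=R_{l_j}\mathbf{x}+\mathbf{t}_{l_j}$ are all prescriptions of the trace of $f$ on the various pieces making up $\partial\Omega$, so $\mathcal{R}$ is an affine subset of $H^1(\Omega)$, hence convex, and it is nonempty since the prescribed data are mutually compatible and admit a common admissible $H^1$ competitor. Uniqueness is then immediate: by Proposition \ref{pro:strong}, $E^D_\Omega$ is strongly convex on the convex set $\mathcal{R}$, and a strongly convex functional has at most one minimizer on a convex set.

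For existence I would run the direct method. Take a minimizing sequence $\{f_n\}\subset\mathcal{R}$ with $E^D_\Omega(f_n)\to\inf_{\mathcal{R}}E^D_\Omega$. The energy bound controls $\|\nabla f_n\|_{L^2(\Omega)}$ uniformly; writing $f_n=f_0+h_n$ with $f_0$ a fixed admissible function and $h_n$ vanishing on the Dirichlet portion of $\partial\Omega$, a Poincaré--Friedrichs inequality (available under Assumption \ref{ass:omega}) upgrades this to a uniform bound on $\|f_n\|_{H^1(\Omega)}$, i.e. coercivity. Since $H^1(\Omega)$ is a reflexive Hilbert space, Theorem \ref{thm:banach} together with reflexivity yields a subsequence with $f_{n_k}\rightharpoonup f^*$ weakly in $H^1(\Omega)$. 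As the trace operator is bounded and linear, it is weak-to-weak continuous, so the constant sequence of prescribed boundary/ROI/line data passes to the limit and $f^*\in\mathcal{R}$; Theorem \ref{thm:rel} moreover gives $f_{n_k}\to f^*$ strongly in $L^2(\Omega)$, which can be used to corroborate feasibility. Finally, $E^D_\Omega$ is convex and norm-lower-semicontinuous, hence weakly lower semicontinuous, so $E^D_\Omega(f^*)\le\liminf_k E^D_\Omega(f_{n_k})=\inf_{\mathcal{R}}E^D_\Omega$, forcing $f^*$ to be the minimizer.

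The step I expect to be the main obstacle is coercivity: the functional sees only the gradient seminorm, so the uniform $H^1$ bound rests entirely on converting the fixed Dirichlet-type data into genuine control of $\|f_n\|_{L^2}$ via a Poincaré-type inequality on the non-convex, multiply punctured domain $\Omega$. Confirming that Assumption \ref{ass:omega} really supports such an inequality, and that the weak limit's trace matches the prescribed value on each separate component $\partial D_1,\partial O_i,\partial l_j$, is where the care lies; the convexity, lower-semicontinuity, and compactness arguments are then routine.
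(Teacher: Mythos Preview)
Your proposal is correct and follows essentially the same route as the paper: the direct method with Poincar\'e for coercivity, Theorem~\ref{thm:banach} for weak compactness, Theorem~\ref{thm:rel} for the compact embedding, and Proposition~\ref{pro:strong} for uniqueness. The only visible difference is in the final step: you invoke weak lower semicontinuity of $E^D_\Omega$ and weak-to-weak continuity of the trace to place the weak limit in $\mathcal{R}$, whereas the paper bootstraps the subsequence to strong $H^1$ convergence (arguing that convergence of the energies forces $\nabla u_{k_p}\to\nabla u^*$ in $L^2$) and then uses that $\mathcal{R}$ is closed and $E^D_\Omega$ is $H^1$-continuous. Your version is the more standard packaging and sidesteps the delicate point of justifying $E^D_\Omega(u_{k_p})\to E^D_\Omega(u^*)$ before knowing $u^*$ is the minimizer; the paper's version, once that step is made rigorous, yields the slightly stronger conclusion that the minimizing subsequence actually converges strongly in $H^1$.
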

\begin{proof}
  In the bounded set $\Omega$ with Lipschitz boundary, because $E^D_\Omega \ge 0$, for any $\epsilon\ge 0$ there exists a sequence of functions $\{u_k\}_{k\in\mathbb{N}}$ such that $E^D_\Omega\(u_k\) \le \epsilon + \inf_{f\in\mathcal{R}}\{E^D_\Omega\(f\)\} $ and $\lim_{k\rightarrow \infty}{E^D_\Omega\(u_k\)}=\inf_{f\in\mathcal{R}}\{E^D_\Omega\(f\)\}$, 
  so there exists $r>0$ such that $\{u_k\}_{k\in\mathbb{N}}\subset B_r^{H^1\(\Omega\)}:=\{u\in H^1\(\Omega\)\mid\left|u\right|_{H^1\(\Omega\)}\le r\}$ because of Poincaré inequality.
  By Theorem \ref{thm:banach}, we can have $\{ u_{k_p} \}_{p\in\mathbb{N}}\subseteq \{u_k\}_{k\in\mathbb{N}}$ which satisfies $w\text{-}\lim_{p\rightarrow \infty}{u_{k_p}} = u$. 
  By Theorem \ref{thm:rel}, $\lim_{p\rightarrow \infty}{u_{k_p}} = u^*$ in the sence of strongly convergence in $L^2\(\Omega\)$. 
  Due to the fact that $\lim_{p\rightarrow \infty}{E\(u_{k_p}\)} = E\(u^*\)$, we can get $\lim_{p\rightarrow \infty}{\nabla u_{k_p}} = \nabla u^*$ with respect to $L^2\(\Omega\)$ norm.
  $|f|_{H^1\(\Omega\)} \le |f|_{L^2\(\Omega\)} + |\nabla f|_{L^2\(\Omega\)}$, so  $\lim_{p\rightarrow \infty}{u_{k_p}} = u^*$ in the sence of strongly convergence in $H^1\(\Omega\)$. 
  $\mathcal{R}$ is a close set in $H^1\(\Omega\)$, so we can have $u^*\in\mathcal{R}$.
  For any $f_1, f_2\in H^1\(\Omega\)$, $|E^D_\Omega\(f_1\) - E^D_\Omega\(f_2\)| \le \tfrac{1}{2}|\nabla f_1 - \nabla f_2|_{L^2\(\Omega\)} \le \tfrac{1}{2}|f_1 - f_2|_{H^1\(\Omega\)}$, so $E^D_\Omega$ is continuous in $H^1\(\Omega\)$. 
  Therefore, $E^D_\Omega\(u^*\) = \lim_{p\rightarrow \infty}{E\(u_{k_p}\)}\le \inf_{f\in\mathcal{R}}\{E^D_\Omega\(f\)\}+\epsilon$. 
  Due to arbitrariness of $\epsilon>0$, we have $E^D_\Omega\(u^*\) = \inf_{f\in\mathcal{R}}\{E^D_\Omega\(f\)\}$. 
  Therefore, we can get $E^D_\Omega\(u^*\) = \inf_{f\in\mathcal{R}}\{E^D_\Omega\(f\)\}$. The existence of the minimizer holds.

  It is obvious that $E^D_\Omega$ is a strongly convex functional, so the uniqueness of the minimizer is also ensured.

  Because $E^D_\Omega$ equals $E^C$ plus a constant, the unique minimizer of $E^D_\Omega$ is also the minimizer of $E^C$.
\end{proof}
Therefore, we can claim the existence and uniqueness of the minimizer in the continuous setting. Denote the minimizer as $f^*$ in $\mathcal{R}$.

Then, we need to ensure the existence and uniqueness of the conformal energy minimizer in the discrete setting. {In order to analyze the properties of the linear system (as in \ref{eq:linear}), we first need to clarify the nature of the matrix $L^a$. In fact, the matrix $L$ derived from the minimization of the discrete conformal energy is the well-known cotangent Laplacian matrix for the mesh $M$ \cite{yueh2017efficient}. The matrix $L^a$ is the principal submatrix of this global Laplacian matrix $L$ that corresponds to the set of interior, unconstrained vertices $\mathcal{I}\left( M \right)$. The following lemma details its specific form and key properties.}
\begin{lemma}
  \label{lemma:<0}
$L^a$ is a principal submatrix of the discrete cotangent Laplacian matrix 
\begin{equation}
  \label{Lap}
  \left(L \right)_{i, j}= \begin{cases}-\frac{1}{2}\left(\cot \alpha_{i j}+\cot \alpha_{j i}\right) & \text { if }[i, j] \text { is an edge, } j \neq i \\ -\sum\limits_{k \neq i} L_{i, k} & \text { if } j=i \\ 0 & \text { otherwise, }\end{cases} 
\end{equation} 
where $\alpha_{i j}$ and $\alpha_{j i}$ are the two angles opposite to the edge $[i, j]$ connecting vertices $i$ and $j$ on the mesh $M$; Crucially, if the mesh $M$ is a Delaunay triangulation, The off-diagonal entries of matrix $L$ are non-positive, i.e. $(L)_{i,j} \le 0, \, i\neq j$.
\end{lemma}
\begin{proof}
  Suppose there are two adjacent simplices $\tau_1 = \left[\mathbf{v}_i, \mathbf{v}_j, \mathbf{v}_k \right], \tau_2 = \left[ \mathbf{v}_i, \mathbf{v}_\ell, \mathbf{v}_j \right] \in \mathcal{F}\(M\)$, then the corresponding entry of edge $\left[i, j \right]$ in $L^a$ is 
  \begin{equation}
    \begin{aligned}
    \(L^a\)_{i, j} = &\sum\limits_{\tau \in \mathcal{N}_i \text{ and } \left[i, j \right]\subset \tau }{\operatorname{Area} \left( \tau \right) \( A_{i}^{\tau}A_{j}^{\tau}+B_{i}^{\tau}B_{j}^{\tau} \)}\\
                   = &\sum\limits_{p = 1,2}{\operatorname{Area} \left( \tau_p \right) \( A_{i}^{\tau_p}A_{j}^{\tau_p}+B_{i}^{\tau_p}B_{j}^{\tau_p} \)}\\
                   = &- \frac{1}{2} \left( \frac{\(\mathbf{v}_j -\mathbf{v}_k\) \cdot \(\mathbf{v}_i -\mathbf{v}_k\)}{2\operatorname{Area}\(\tau_1\)} +\frac{\(\mathbf{v}_j -\mathbf{v}_\ell\) \cdot \(\mathbf{v}_i -\mathbf{v}_\ell\)}{2\operatorname{Area}\(\tau_2\)}\right)\\
                   = &-\tfrac{1}{2}\(\cot\(\alpha_1\) +\cot\(\alpha_2\)\) \\
                   = &\left(L \right)_{i, j}.
    \end{aligned}
  \end{equation}
  For off-diagonal entries of matrix $L$, the values are either $0$ or $-\tfrac{1}{2}\left( \cot \alpha_{i j}+\cot \alpha_{j i} \right)$, where $\alpha_{i j}$ and $\alpha_{j i}$ are the two angles opposite to the edge $[i, j]$. Because $M$ is a Delaunay rectangular mesh, we have $\alpha_1 + \alpha_2 \le \pi$, which means $-\left( \cot \alpha_{i j} + \cot \alpha_{j i}\right) \le 0$. Therefore, $(L)_{i,j} \le 0, \, i\neq j$.
\end{proof}

\begin{definition}
  (i) A matrix $A \in \mathbb{R}^{m \times n}$ is said to be nonnegative(positive) if all entries of $A$ are nonnegative(positive).\\
  (ii) A matrix $A \in \mathbb{R}^{n \times n}$ is said to be an M-matrix if $A=s I-B$, where $B$ is nonnegative and $s \geq \rho(B)$.\\
  {
  (iii) A matrix $A \in \mathbb{R}^{n \times n}$ is a symmetric positive definite matrix with nonpositive off-diagonal entries}
\end{definition}

\begin{lemma}[\cite{berman1994nonnegative}]
\label{lemma:sm}
{
  A symmetric nonsingular M-matrix is a Stieltjes matrix.
  }
\end{lemma}

\begin{lemma}[Theorem 1.4.10 in \cite{molitierno2016applications}]
  \label{lemma:submatrix}
  Suppose $A \in \mathbb{R}^{n \times n}$ is a singular, irreducible M-matrix. Then, each principal submatrix of $A$ other than $A$ itself is a nonsingular $M$-matrix.
\end{lemma}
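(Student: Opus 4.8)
The plan is to reduce the statement to the Perron--Frobenius theory of nonnegative matrices. Write $A = sI - B$ with $B \ge 0$ and $s \ge \rho(B)$, as in the definition of an M-matrix. First I would pin down the scalar $s$: since $A$ is singular, $0$ is an eigenvalue of $A = sI - B$, so $s$ is an eigenvalue of $B$; because $s \ge \rho(B) \ge 0$ while any eigenvalue of $B$ has modulus at most $\rho(B)$, this forces $s = \rho(B)$. Moreover $A$ and $B$ differ only on the diagonal (by $sI$), so they share the same off-diagonal zero pattern, and the irreducibility of $A$ is exactly the irreducibility of $B$. Thus we are reduced to a nonnegative irreducible $B$ with $A = \rho(B)\,I - B$.

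Next I would pass to a proper principal submatrix. Fix an index set $\alpha \subsetneq \{1,\dots,n\}$ with $\alpha \ne \emptyset$, and let $A[\alpha]$ and $B[\alpha]$ denote the corresponding principal submatrices. Then $A[\alpha] = s\,I_{|\alpha|} - B[\alpha]$ with $B[\alpha] \ge 0$, so $A[\alpha]$ already has the structural form $sI - (\text{nonnegative matrix})$ required of an M-matrix; the entire problem is to show the shift strictly exceeds the spectral radius, i.e. $\rho(B[\alpha]) < s = \rho(B)$. Granting this, the eigenvalues of $A[\alpha]$ are exactly $s - \mu$ with $\mu$ ranging over the spectrum of $B[\alpha]$; since every such $\mu$ satisfies $|\mu| \le \rho(B[\alpha]) < s$, we get $s - \mu \ne 0$, so $A[\alpha]$ is nonsingular, and $A[\alpha] = s\,I_{|\alpha|} - B[\alpha]$ with $s > \rho(B[\alpha])$ is by definition a nonsingular M-matrix.

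The crux, and the step I expect to be the main obstacle, is the \emph{strict} spectral inequality $\rho(B[\alpha]) < \rho(B)$ for a proper principal submatrix of an irreducible nonnegative matrix. I would obtain it from the strict monotonicity form of the Perron--Frobenius theorem: if $B$ is irreducible and nonnegative, $0 \le C \le B$ entrywise and $C \ne B$, then $\rho(C) < \rho(B)$. To apply it, embed $B[\alpha]$ back into an $n \times n$ matrix $\widehat{B}$ by placing $B[\alpha]$ in the $\alpha \times \alpha$ block and zeros elsewhere; then $0 \le \widehat{B} \le B$, and $\widehat{B} \ne B$ because irreducibility of $B$ guarantees a strictly positive entry linking $\alpha$ to its complement (the directed graph of $B$ is strongly connected, hence has an arc from $\alpha$ into its complement). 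The block structure gives $\rho(\widehat{B}) = \rho(B[\alpha])$, so the monotonicity theorem yields $\rho(B[\alpha]) = \rho(\widehat{B}) < \rho(B) = s$, which is precisely what the second paragraph needs.

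For completeness I would note why strictness, rather than the easy inequality $\rho(\widehat{B}) \le \rho(B)$, is the delicate point: the submatrix $B[\alpha]$ need not itself be irreducible, so its Perron vector may vanish on part of $\alpha$, and a naive left-eigenvector pairing only yields $\le$. The strict conclusion is exactly where irreducibility of the \emph{full} matrix $B$ enters, and invoking the strict Perron--Frobenius monotonicity (equivalently, using that $(I+B)^{\,n-1}$ is entrywise positive while $(I+\widehat{B})^{\,n-1}$ inherits the zero pattern of the cut) is what closes the gap. Everything else is the routine bookkeeping carried out in the two paragraphs above.
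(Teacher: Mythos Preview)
Your argument is correct and is the standard Perron--Frobenius route to this result. The paper, however, does not supply a proof of this lemma at all: it is quoted verbatim as Theorem~1.4.10 of the cited reference and used as a black box in the proof of Theorem~\ref{thm:diss}. So there is no ``paper's own proof'' to compare against; you have simply filled in what the authors chose to cite.

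One small remark on presentation: your reduction to the strict spectral inequality $\rho(B[\alpha])<\rho(B)$ is clean, and the embedding $\widehat{B}$ together with the strict monotonicity form of Perron--Frobenius (Wielandt's theorem, or the version in Berman--Plemmons) is exactly the right tool. The only place one might ask for a word more is the claim $\widehat{B}\ne B$: you correctly note that irreducibility of $B$ forces an arc between $\alpha$ and its complement, but it is worth stating explicitly that this gives a nonzero entry of $B$ lying \emph{outside} the $\alpha\times\alpha$ block, since that is what distinguishes $\widehat{B}$ from $B$. Everything else is in order.
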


\begin{lemma}[\cite{yueh2017efficient}]
  \label{lemma:L}
  The $n_v \times n_v$ Laplacian matrix $L$ defined in (\ref{Lap}), is a singular M-matrix.
\end{lemma}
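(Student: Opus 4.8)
The plan is to exhibit $L$ directly in the M-matrix form $L = sI - B$ with $B$ nonnegative and $s = \rho(B)$, and then to read off singularity from the vanishing row sums; the point is that $L$ is essentially a weighted graph Laplacian whose off-diagonal weights are nonpositive by the Delaunay condition, so the conclusion reduces to standard Perron--Frobenius bookkeeping.

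First I would record the sign pattern of $L$. By the definition in \ref{Lap} together with Lemma \ref{lemma:<0}, every off-diagonal entry satisfies $L_{i,j} \le 0$: for a genuine edge $[i,j]$ the Delaunay property forces the two opposite angles to obey $\alpha_{ij} + \alpha_{ji} < \pi$, hence $\cot\alpha_{ij} + \cot\alpha_{ji} > 0$ and $L_{i,j} = -\frac{1}{2}\left(\cot\alpha_{ij} + \cot\alpha_{ji}\right) < 0$, while all remaining off-diagonal entries vanish. Moreover the diagonal is defined precisely so that each row sums to zero, i.e. $\sum_{j} L_{i,j} = 0$, equivalently $L\mathbf{1} = \mathbf{0}$ for the all-ones vector $\mathbf{1}$.

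Next I would build the splitting. Set $s := \max_i L_{i,i}$ and $B := sI - L$. The off-diagonal entries of $B$ equal $-L_{i,j} \ge 0$ and its diagonal entries equal $s - L_{i,i} \ge 0$, so $B$ is nonnegative and $L = sI - B$ has the required structure. It remains to verify $s \ge \rho(B)$, and in fact I would prove equality. Each row sum of $B$ equals $\sum_{j}\left(s\delta_{ij} - L_{i,j}\right) = s - \sum_{j} L_{i,j} = s$, so $B$ has constant row sums all equal to $s$. For a nonnegative matrix the spectral radius lies between the minimum and maximum row sums, which here coincide, forcing $\rho(B) = s$. Hence $L = sI - B$ with $B \ge 0$ and $s = \rho(B) \ge \rho(B)$, so $L$ meets the definition of an M-matrix.

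Finally, singularity is immediate. Since $\rho(B)$ is itself an eigenvalue of the nonnegative matrix $B$ (Perron--Frobenius), $s \in \sigma(B)$, so $0 = s - s \in \sigma(L)$ and $\det L = 0$; alternatively $L\mathbf{1} = \mathbf{0}$ already exhibits $\mathbf{1}$ as a null vector. Combining the two parts, $L$ is a singular M-matrix. The only geometric input anywhere is the nonpositivity of the off-diagonal entries, so I expect the main obstacle to be cleanly justifying the Delaunay angle inequality underlying $L_{i,j} \le 0$ (already handled in Lemma \ref{lemma:<0}); everything else is the standard linear algebra of graph-Laplacian-type matrices, with the constant-row-sum computation giving $\rho(B) = s$ as the technical heart.
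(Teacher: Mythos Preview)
The paper does not supply its own proof of this lemma: it is quoted verbatim as a cited result from \cite{yueh2017efficient}, so there is no in-paper argument to compare your proposal against.

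Your argument is the standard and correct one for showing that a cotangent Laplacian with nonpositive off-diagonal weights is a singular M-matrix. The splitting $L=sI-B$ with $s=\max_i L_{i,i}$, the observation that the zero row sums of $L$ translate into constant row sums $s$ for $B$, and the resulting identification $\rho(B)=s$ are exactly the right mechanism; the null vector $\mathbf{1}$ then gives singularity directly. One small caution: Lemma~\ref{lemma:<0} as stated only treats interior edges (those with two adjacent triangles), whereas the full matrix $L$ also has entries corresponding to boundary edges, where only a single cotangent appears and the Delaunay opposite-angle inequality is not immediately available. You already flag the sign condition on the off-diagonals as the one geometric input to be justified carefully, so you are aware of where the work lies; just make sure your write-up covers the boundary case explicitly (or invokes whatever boundary hypothesis the cited reference uses).
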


\begin{theorem}[Existence and uniqueness of the minimizer in the discrete setting]
  \label{thm:diss}
  $L^a$ is a Stieltjes matrix, and there is one and only one minimizer $f^*_M$ of $E^C_{M}\(\mathbf{f}\)$, whose matrix form is:
  \begin{equation}
    \mathbf{f}^*_{M} = \left[ \begin{matrix}-\(L^a\)^{-1} L^b \mathbf{f}_b\\ \mathbf{f}_b\end{matrix}\right].
  \end{equation}
\end{theorem}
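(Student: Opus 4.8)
The plan is to establish the three assertions in sequence: that $L^a$ is a Stieltjes matrix, that this forces a unique minimizer of the stated form, and finally the complexity bound. The algebra is largely handled by the cited lemmas, so the proof is mainly a matter of assembling them and checking the one hypothesis they require.

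\textbf{Step 1 ($L^a$ is Stieltjes).} First I would identify $L^a$ with the principal submatrix of the full cotangent Laplacian $L$ indexed by the interior vertices $\mathcal{I}(\mathcal{M})$, which is precisely the content of Lemma \ref{lemma:<0}. By Lemma \ref{lemma:L} this Laplacian is a singular M-matrix. To invoke Lemma \ref{lemma:submatrix} I additionally need $L$ to be \emph{irreducible}; this is where the geometry enters. Since $\mathcal{M}$ is a simply connected (hence connected) triangulation, the adjacency graph underlying $L$ is connected, and connectedness of the graph is equivalent to irreducibility of $L$. Because $\mathcal{B}(\mathcal{M})\cup\mathcal{O}(\mathcal{M})\cup\mathcal{L}(\mathcal{M})$ is nonempty — the boundary alone guarantees this — we have $n_i<N_v$, so $L^a$ is a \emph{proper} principal submatrix of $L$. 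Lemma \ref{lemma:submatrix} then gives that $L^a$ is a nonsingular M-matrix, and symmetry is inherited from the symmetry of the cotangent weights $L_{i,j}=L_{j,i}$. Hence $L^a$ is a symmetric nonsingular M-matrix, i.e. a Stieltjes matrix; in particular it is symmetric positive definite.

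\textbf{Step 2 (existence, uniqueness, and form).} Next I would read the discrete conformal energy $E^C_\mathcal{M}(\mathbf{f}) = -\mathcal{A}(D_2)+\tfrac12\sum_{\tau}\|\nabla\Pi_\mathcal{M}f|_\tau\|^2\mathcal{A}(\tau)$ as a quadratic function of the free unknowns $\mathbf{f}_a$, the values at interior vertices, with $\mathbf{f}_b$ fixed by the constraints defining $\mathcal{S}_\mathcal{M}$. Since $\|\nabla\Pi_\mathcal{M}f|_\tau\|^2$ splits as $|\nabla u|_\tau|^2+|\nabla v|_\tau|^2$, the energy separates into independent $u$- and $v$-contributions, so its Hessian in $\mathbf{f}_a$ is, up to reordering, block diagonal with both diagonal blocks equal to $L^a$. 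Step 1 makes this Hessian positive definite, so $E^C_\mathcal{M}$ is strictly convex in $\mathbf{f}_a$ and has exactly one critical point, its unique global minimizer. Setting the gradient to zero reproduces the first-order system $L\mathbf{f}=\mathbf{O}$, equivalently the reduced normal equation $L^a\mathbf{f}_a=-L^b\mathbf{f}_b$; nonsingularity of $L^a$ makes its solution unique, and assembling it with the fixed block $\mathbf{f}_b$ yields the stated block form of $\mathbf{f}^*_\mathcal{M}$.

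\textbf{Step 3 (complexity).} Finally, because $L^a$ is symmetric positive definite of size $n_i\times n_i$, the system $L^a\mathbf{f}_a=-L^b\mathbf{f}_b$ can be solved by a Cholesky factorization in $\mathcal{O}(n_i^3)$ operations, which dominates formation of the right-hand side; this gives the claimed bound.

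\textbf{Main obstacle.} The cited lemmas do almost all the work, so the one genuinely new point is verifying the hypotheses that let me apply Lemma \ref{lemma:submatrix}: that $L$ is irreducible and that $L^a$ is a proper principal submatrix. Both reduce to structural facts about the mesh — connectedness of $\mathcal{M}$ and nonemptiness of the constrained vertex set — so the care lies in stating these mesh hypotheses precisely rather than in any hard estimate. A secondary check is the decoupling of the $u$- and $v$-energies, which is exactly what upgrades the scalar Stieltjes property of $L^a$ to positive definiteness of the full Hessian and thus to strict convexity.
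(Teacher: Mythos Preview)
Your proposal is correct and follows essentially the same line as the paper's own proof: both obtain irreducibility of $L$ from connectedness of $\mathcal{M}$, invoke Lemma~\ref{lemma:submatrix} on the proper principal submatrix $L^a$, inherit symmetry to conclude the Stieltjes property, and finish with Cholesky for the $\mathcal{O}(n_i^3)$ bound. Your Step~2 is in fact more explicit than the paper, which jumps directly from ``$L^a$ is Stieltjes'' to ``there is one and only one minimizer'' without spelling out the strict convexity or the $u$/$v$ decoupling you describe.
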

\begin{proof}
  From Lemma \ref{lemma:L}, we know that $L$ is a singular M-matrix. 
  Because the mesh $M$ is connected, the adjacent matrix $A$ of graph $M$ is irreducible.
  Due to Lemma \ref{lemma:<0}, the matrix $L^a$ can be represented as $L = D + E$, 
  where $D$ is a diagonal matrix and $E$ is a negative matrix whose diagonal entries are all 0.
  Since $A$ and $E$ has the following relationship:
  \begin{equation}
    A_{i,j} =  \begin{cases} 
      0, & E_{i,j} = 0, \\ 
      1, & E_{i,j} \ne 0,\end{cases} 
  \end{equation}
  $E$ is is irreducible too, so $L$ is a irreducible M-matrix. 
  $L^a$ is a principal submatrix of $L$, so $L^a$ is a nonsingular M-matrix by Lemma \ref{lemma:submatrix}. Additionally, since $L$ is a symmetric matrix, 
  the principal submatrix $L^a$ is also symmetric. By Lemma \ref{lemma:sm}, we get that $L^a$ is a Stieltjes matrix, which is positive definite and invertible. Therefore, there is one and only one minimizer $f^*_M$ of $E^C_{M}\(\mathbf{f}\)$, whose matrix form is:
  \begin{equation}
    \mathbf{f}^*_M := \left[ \begin{matrix}-\(L^a\)^{-1} L^b \mathbf{f}_b\\ \mathbf{f}_b\end{matrix}\right].
  \end{equation}

For computational complexity of the discrete minimizer, denote $\operatorname{nnz}\left(A\right)$ as the number of nonzero elements in matrix $A$ and $\kappa$ is the average degree of vertices in the mesh $M$. The computational complexity of $\mathbf{h} = L^b \mathbf{f}_b$ is $O\left(\operatorname{nnz}\left(L^b\right) \right) = O\left(\kappa n_v\right)$. 
The primary computational cost is in solving the sparse linear system $L^a \mathbf{f_a} = \mathbf{h}$ using the Nested Dissection algorithm \cite{gilbert1986analysis}. This algorithm recursively partitions the graph using separators $S$ of size $|S| = \mathcal{O}(\sqrt{n_i})$. The complexity is derived from recurrence relations dominated by factoring the dense separator block at each level. The operation count follows the recurrence $T(n_i) = 2 T(n_i/2) + \mathcal{O}(|S|^3) = 2 T(n_i/2) + \mathcal{O}(n_i^{3/2})$, which solves to a total of $\mathcal{O}(n_i^{3/2})$ \cite{cormen2022introduction}. As the factorization cost is asymptotically dominant over the pre-processing and solve phases, the total complexity is $\mathcal{O}(n_i^{3/2})$.

\end{proof}
\label{sec:complexity}

\subsection{Convergence of sequence of minimizers}
If Assumption \ref{ass:mk} is true, the existence and uniqueness of $f^*_k := f^*_{M_k}$ is ensured.
In this subsection, we will talk about the convergence property of the sequence of minimizers $\{f^*_k\}_{k\in\mathbb{N}}$.

\begin{lemma}[Representation of $\mathcal{S}$ with $\mathcal{R}$]
  \label{lemma:rep}
  If Assumption \ref{ass:mk} and Assumption \ref{ass:roi} hold, $\mathcal{S}_k = \{f\mid f = \Pi_{M_k} \phi,\, \phi \in \mathcal{R} \}\subseteq \mathcal{R}$.
\end{lemma}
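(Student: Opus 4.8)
The plan is to prove the two assertions—the set equality $\mathcal{S}_k = \{\Pi_{\mathcal{M}_k}\phi : \phi\in\mathcal{R}\}$ and the inclusion of this set in $\mathcal{R}$—by reducing everything to the single fact that piecewise-affine (simplicial) interpolation reproduces affine functions exactly. Write $T:=\{\Pi_{\mathcal{M}_k}\phi : \phi\in\mathcal{R}\}$ for the right-hand set. I would first establish the key inclusion $\mathcal{S}_k\subseteq\mathcal{R}$; once this is in hand, both remaining parts are short.

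For $\mathcal{S}_k\subseteq\mathcal{R}$, take any $f\in\mathcal{S}_k$; by definition $f=\Pi_{\mathcal{M}_k}f$ is affine on each simplex and satisfies the vertex constraints (\ref{equ:con}). I must upgrade these vertex equalities to the continuous equalities defining $\mathcal{R}$. Fix an ROI $O_i$ and a point $\mathbf{x}\in O_i$. By the first clause of Assumption \ref{ass:roi}, $O_i$ is exactly the union of the simplices in $O_i(\mathcal{F}(\mathcal{M}_k))$, so $\mathbf{x}$ lies in some triangle $\tau=[\mathbf{v}_a,\mathbf{v}_b,\mathbf{v}_c]$ all of whose vertices belong to $O_i(\mathcal{M}_k)$; writing $\mathbf{x}=\alpha_a\mathbf{v}_a+\alpha_b\mathbf{v}_b+\alpha_c\mathbf{v}_c$ in barycentric coordinates and using $f(\mathbf{v}_p)=r_O\mathbf{v}_p+\mathbf{t}_{O_i}$ at each vertex, the affineness of $f|_\tau$ together with $\alpha_a+\alpha_b+\alpha_c=1$ gives $f(\mathbf{x})=r_O\mathbf{x}+\mathbf{t}_{O_i}$. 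The identical computation along an edge $[\mathbf{v}_a,\mathbf{v}_b]$ covering $l_j$ (second clause) yields $f|_{l_j}(\mathbf{x})=R_{l_j}\mathbf{x}+\mathbf{t}_{l_j}$, and along each boundary edge the third clause (affinity of $g$ on $\mathcal{E}(\mathcal{M}_k)\cap\partial D_1$) gives $f(\mathbf{x})=g(\mathbf{x})$. Hence $f$ satisfies all the continuous constraints and $f\in\mathcal{R}$.

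With $\mathcal{S}_k\subseteq\mathcal{R}$ secured, the equality $\mathcal{S}_k=T$ follows from two one-line inclusions. For $\mathcal{S}_k\subseteq T$: any $f\in\mathcal{S}_k$ satisfies $f=\Pi_{\mathcal{M}_k}f$ and, by the previous paragraph, $f\in\mathcal{R}$, so $f=\Pi_{\mathcal{M}_k}\phi$ with the admissible choice $\phi=f$. For $T\subseteq\mathcal{S}_k$: given $f=\Pi_{\mathcal{M}_k}\phi$ with $\phi\in\mathcal{R}$, the map $f$ is simplicial (so $\Pi_{\mathcal{M}_k}f=f$), and since $\Pi_{\mathcal{M}_k}\phi$ agrees with $\phi$ at every vertex, the continuous constraints on $\phi$ restrict to exactly the vertex constraints (\ref{equ:con}) on $f$—here one uses the alignment in Assumption \ref{ass:roi} to know each vertex of $O_i(\mathcal{M}_k)$, $l_j(\mathcal{M}_k)$, $\mathcal{B}(\mathcal{M}_k)$ lies in the corresponding continuous locus $O_i$, $l_j$, $\partial D_1$ where $\phi$ is pinned—so $f\in\mathcal{S}_k$. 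Combining, $\mathcal{S}_k=T$, and since $\mathcal{S}_k\subseteq\mathcal{R}$ we also obtain $T\subseteq\mathcal{R}$, which is the full claim.

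The substantive step is the upgrade $\mathcal{S}_k\subseteq\mathcal{R}$, and the only thing that makes it work is Assumption \ref{ass:roi}: it is precisely the mesh-alignment of $O_i$, $l_j$, and the boundary (together with the affinity of $g$ on boundary edges) that lets every point of a constraint locus be covered by a simplex on which $f$ is affine and pinned at all vertices. The delicate point I would be careful about is matching the two index conventions—the vertex set $O_i(\mathcal{M}_k)$ appearing in $\mathcal{S}_k$ versus the simplices $O_i(\mathcal{F}(\mathcal{M}_k))$ whose union is assumed to equal $O_i$—and verifying that these are compatible so that the vertices used in the barycentric expansion genuinely carry the constraint; this is exactly what Assumption \ref{ass:roi} is engineered to guarantee, and the affine-reproduction identity then does the rest.
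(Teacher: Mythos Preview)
Your proposal is correct and follows essentially the same approach as the paper: both arguments use Assumption~\ref{ass:roi} together with the barycentric-coordinate computation to upgrade vertex constraints to the continuous constraints of $\mathcal{R}$, and both use the idempotence $\Pi_{\mathcal{M}_k}f=f$ to close the set equality. The only difference is the order of presentation---you prove $\mathcal{S}_k\subseteq\mathcal{R}$ first and then derive both inclusions $\mathcal{S}_k\subseteq T$ and $T\subseteq\mathcal{S}_k$, whereas the paper first checks $T\subseteq\mathcal{S}_k$ directly and then establishes $\mathcal{S}_k\subseteq\mathcal{R}$---but the substance is the same.
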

\begin{proof}
  If Assumption \ref{ass:roi} holds, for any $\phi \in\mathcal{R}\subset H^1\(\mathcal{D}_1\)$, we have 
  \begin{equation}
    \left\{
    \begin{aligned}
      &\Pi_{M_k}\phi = \Pi_{M_k}\Pi_{M_k}\phi;\\
      &\(\Pi_{M_k}\phi\) \(\mathbf{v}_\ell \) = r_{\mathcal{O}} \mathbf{v}_\ell  + \mathbf{t}_{\mathcal{O}_i},\, \quad \text{for } \mathbf{v}_\ell \in \mathcal{O}_i\(M\)\subseteq \mathcal{O}_i, \, i = 1, 2, \dots, k;\\
      &\(\Pi_{M_k}\phi\) \(\mathbf{v}_\ell \) = R_{\mathcal{L}_j} \mathbf{v}_\ell  + \mathbf{t}_{\mathcal{L}_j},\, \quad \text{for } \mathbf{v}_\ell \in \mathcal{L}_j\(M\)\subseteq \mathcal{L}_j,\, j = 1, 2, \dots, l;\\
      &\(\Pi_{M_k}\phi\) \(\mathbf{v}_\ell \) = g \(\mathbf{v}_\ell\) ,\,\text{for } \mathbf{v}_\ell \in \mathcal{B}\(M\).
    \end{aligned}
    \right.
  \end{equation}
  Therefore, $\{f\mid f = \Pi_{M_k} \phi,\, \phi \in \mathcal{R}  \}\subseteq \mathcal{S}_k$.

  As for any simplicial mapping $f\in\mathcal{S}_k \subset H^1\(\mathcal{D}_1\)$, for $\mathbf{x} = \alpha_i\mathbf{v}_i + \alpha_j\mathbf{v}_j + \alpha_k\mathbf{v}_k \in \left[\mathbf{v}_i, \mathbf{v}_j, \mathbf{v}_\ell\right] \subseteq \mathcal{O}_i$,
  \begin{equation}
    \begin{aligned}
    f\(\mathbf{x}\) &= r_{\mathcal{O}} \( \alpha_i\mathbf{v}_i + \alpha_j\mathbf{v}_j + \alpha_k\mathbf{v}_\ell\) + \( \alpha_i + \alpha_j + \alpha_\ell\)\mathbf{t}_{\mathcal{O}_i} =  r_{\mathcal{O}} \mathbf{x}\ + \mathbf{t}_{\mathcal{O}_i};
    \end{aligned}
  \end{equation}
  For $\mathbf{x} = \alpha_i\mathbf{v}_i + \alpha_j\mathbf{v}_j \in \left[\mathbf{v}_i, \mathbf{v}_j\right] \subseteq \mathcal{L}_j$,
  \begin{equation}
    \begin{aligned}
    f\(\mathbf{x}\) &= R_{\mathcal{L}_j} \( \alpha_i\mathbf{v}_i + \alpha_j\mathbf{v}_j\) + \( \alpha_i + \alpha_j \)\mathbf{t}_{\mathcal{L}_j}
    =  R_{\mathcal{L}_j} \mathbf{x}\ + \mathbf{t}_{\mathcal{L}_j};
    \end{aligned}
  \end{equation}
  For $\mathbf{x} = \alpha_i\mathbf{v}_i + \alpha_j\mathbf{v}_j \in \left[\mathbf{v}_i, \mathbf{v}_j\right] \subseteq \partial \mathcal{D}_1$,
  \begin{equation}
    \begin{aligned}
    f\(\mathbf{x}\) &=  \alpha_i g \(\mathbf{v}_i\) + \alpha_j g\(\mathbf{v}_j\)
    = g\(\alpha_i\mathbf{v}_i + \alpha_j\mathbf{v}_j\)
    = g\(\mathbf{x}\). 
    \end{aligned}
  \end{equation}
  Therefore, we have $f\in \mathcal{R}$. 
  Noticing that $f = \Pi_{M_k} f $, we can get $\mathcal{S}_k = \{f\mid f = \Pi_{M_k} \phi,\, \phi \in \mathcal{R}\}\subseteq \mathcal{R}$.
\end{proof}

\begin{lemma}[Increasing sequence of sets]
  \label{lemma:increasing}
  If Assumption \ref{ass:mk} and Assumption \ref{ass:roi} hold, $\{\mathcal{S}_k\}_{k\in\mathbb{N}}$ is an increasing sequence of sets.
  Additionally, if Assumption \ref{ass} is true, the minimizer in the continuous setting $f^* \in \overline{\lim_{k\rightarrow \infty} {\mathcal{S}_k}}$.
\end{lemma}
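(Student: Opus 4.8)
The plan is to treat the two assertions separately: first the monotonicity of $\{\mathcal{S}_k\}$, which uses only the subdivision structure together with Lemma \ref{lemma:rep}, and then the density statement, which follows from Assumption \ref{ass} and the closedness of $\mathcal{R}$.

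For the monotonicity, I would start from the representation $\mathcal{S}_k = \{\Pi_{\mathcal{M}_k}\phi \mid \phi \in \mathcal{R}\} \subseteq \mathcal{R}$ supplied by Lemma \ref{lemma:rep}. Fix $f \in \mathcal{S}_k$; then $f \in \mathcal{R}$ and $f = \Pi_{\mathcal{M}_k} f$. The key observation is that because $\mathcal{M}_{k+1} \triangleleft \mathcal{M}_k$ is a subdivision, every cell of $\mathcal{M}_{k+1}$ lies inside a single cell of $\mathcal{M}_k$, on which $f$ is affine; hence $f$ is affine on each cell of $\mathcal{M}_{k+1}$ as well, i.e. $f = \Pi_{\mathcal{M}_{k+1}}f$. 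Since $f \in \mathcal{R}$, applying Lemma \ref{lemma:rep} at level $k+1$ gives $f = \Pi_{\mathcal{M}_{k+1}}f \in \mathcal{S}_{k+1}$. This proves $\mathcal{S}_k \subseteq \mathcal{S}_{k+1}$ for every $k$, so $\{\mathcal{S}_k\}$ is increasing and $\lim_{k\to\infty}\mathcal{S}_k = \bigcup_{k\in\mathbb{N}}\mathcal{S}_k = \mathcal{S}$.

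For the density, the inclusion $\bar{\mathcal{S}} \subseteq \mathcal{R}$ is immediate: each $\mathcal{S}_k \subseteq \mathcal{R}$ by Lemma \ref{lemma:rep}, so $\mathcal{S} \subseteq \mathcal{R}$, and $\mathcal{R}$ is closed in $H^1(D_1)$ (as already noted in the proof of Theorem \ref{thm:con}), whence $\bar{\mathcal{S}} \subseteq \bar{\mathcal{R}} = \mathcal{R}$. For the reverse inclusion I would take an arbitrary $\phi \in \mathcal{R}$ and invoke Assumption \ref{ass} in the form $|\phi - \Pi_{\mathcal{M}_k}\phi|_{H^1(D_1)} \to 0$. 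Since $\Pi_{\mathcal{M}_k}\phi \in \mathcal{S}_k \subseteq \mathcal{S}$ for every $k$, this exhibits $\phi$ as an $H^1$-limit of elements of $\mathcal{S}$, so $\phi \in \bar{\mathcal{S}}$. Combining the two inclusions yields $\bar{\mathcal{S}} = \mathcal{R}$, and together with $\lim_{k\to\infty}\mathcal{S}_k = \mathcal{S}$ this gives $\overline{\lim_{k\to\infty}\mathcal{S}_k} = \bar{\mathcal{S}} = \mathcal{R}$.

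The two inclusions in the density step are essentially routine once Assumption \ref{ass} and the closedness of $\mathcal{R}$ are in hand. The one step deserving care is the implication $f = \Pi_{\mathcal{M}_k}f \Rightarrow f = \Pi_{\mathcal{M}_{k+1}}f$ under refinement and, with it, the nesting of the discrete constraints. Here Assumption \ref{ass:roi} does the real work: it guarantees that the ROI, line, and boundary configurations of $\mathcal{M}_{k+1}$ sit inside the continuous constraint regions $O_i$, $l_j$, $\partial D_1$, so that a function already in $\mathcal{R}$ automatically satisfies the finer discrete constraints and no constraint is violated by subdivision. I expect this bookkeeping, rather than any analytic difficulty, to be the main obstacle.
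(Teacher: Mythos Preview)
Your proposal is correct and follows essentially the same route as the paper: the monotonicity is obtained from the identity $\Pi_{\mathcal{M}_{k+1}}\Pi_{\mathcal{M}_k}f=\Pi_{\mathcal{M}_k}f$ (which you justify via the subdivision structure) together with Lemma~\ref{lemma:rep}, and the density is obtained by combining Assumption~\ref{ass} with the closedness of $\mathcal{R}$ in $H^1(D_1)$. Your write-up is in fact slightly more explicit than the paper's on why piecewise-affine maps on $\mathcal{M}_k$ remain piecewise-affine on the refinement $\mathcal{M}_{k+1}$.
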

\begin{proof}
  From the proof of Lemma \ref{lemma:rep}, we know that the ROIs, line structures, boundary conditions and $f\(\mathcal{D}_1\)=\mathcal{D}_2$ constraints are all equivalent.
  We also have $\Pi_{M_{k+1}}\Pi_{M_{k}}f = \Pi_{M_{k}}f$ for $f\in\mathcal{R},\,k\in\mathbb{N}$, so $\Pi_{M_{k}}f\in\mathcal{S}_{k+1}$.
  Therefore, $\mathcal{S}_{1}\subseteq\mathcal{S}_{2}\subseteq\mathcal{S}_{3}\subseteq\dots$.

  Given the increasing sequence of sets $\{\mathcal{S}_k\subseteq\mathcal{R}\}_{k\in\mathbb{N}}$, we have $\lim_{k\rightarrow \infty}\mathcal{S}_k=\bigcup_k^\infty{\mathcal{S}_k} = \mathcal{S}$. 
  By Assumption \ref{ass}, $f^* \in \overline{\mathcal{S}}$ holds, so $f^* \in \overline{\lim_{k\rightarrow \infty} {\mathcal{S}_k}}$.
\end{proof}

\begin{theorem}[Convergence of minimums and minimizers]
\label{thm:conv}
  If Assumption \ref{ass:omega}, Assumption \ref{ass:mk}, Assumption \ref{ass:roi}, and Assumption \ref{ass} are true, we have
  \newline
  \begin{enumerate}[label=(\roman*)]
    \item $E^C\(f^*_k\) \rightarrow E^C\(f^*\),\, k \rightarrow \infty$;
    \item there exists a convergent subsequence $\{f^*_{k_p}\}_{p\in\mathbb{N}}$ of $\{f^*_{k}\}_{k\in\mathbb{N}}$;
    \item for any convergent subsequence $\{f^*_{k_q}\}_{q\in\mathbb{N}}$, $\lim_{q\rightarrow \infty}\left|f^*-f^*_{k_q}\right|_{H^1\(\mathcal{D}_1\)} = 0$.
  \end{enumerate}
\end{theorem}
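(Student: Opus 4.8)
The plan is to read this as a Galerkin-type convergence statement: by Lemma \ref{lemma:rep} the discrete feasible sets are nested inside the continuous one, $\mathcal{S}_k\subseteq\mathcal{R}$, and by Lemma \ref{lemma:increasing} they exhaust it, while the discrete energies converge pointwise to $E^C$. I would prove the items in the order (i), (ii), (iii), letting the strong convexity of Proposition \ref{pro:strong} do the decisive work at the end. For (i) I would squeeze $E^C(f^*_k)$ between two bounds. Since $f^*_k\in\mathcal{S}_k\subseteq\mathcal{R}$ and $f^*$ is the global minimizer over $\mathcal{R}$ (Theorem \ref{thm:con}), the lower bound $E^C(f^*_k)\ge E^C(f^*)$ holds for every $k$. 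For the matching upper bound I would use the recovery competitor $\Pi_{\mathcal{M}_k}f^*$, which lies in $\mathcal{S}_k$ by Lemma \ref{lemma:rep}; minimality of $f^*_k$ over $\mathcal{S}_k$ applied to this competitor, together with idempotency of the interpolation operator (so that $E^C_k(\Pi_{\mathcal{M}_k}f^*)=E^C_k(f^*)=E^C(\Pi_{\mathcal{M}_k}f^*)$), gives $E^C(f^*_k)\le E^C(\Pi_{\mathcal{M}_k}f^*)$, and the point-wise convergence theorem sends the right-hand side to $E^C(f^*)$. Taking $\liminf$ and $\limsup$ then yields (i).

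For (ii) I would first note that the energy bound from (i) controls the Dirichlet seminorm $|\nabla f^*_k|_{L^2}$, since $E^C=E^D-\mathcal{A}(D_2)$; because every $f^*_k$ carries the fixed boundary trace $g$, the Poincaré–Friedrichs inequality used already in Theorem \ref{thm:con} upgrades this to a uniform bound on $|f^*_k|_{H^1(D_1)}$. Reflexivity of $H^1$ (equivalently Banach–Alaoglu, Theorem \ref{thm:banach}, in the Hilbert setting) then extracts a weakly convergent subsequence $f^*_{k_p}\rightharpoonup\hat f$; as $\mathcal{R}$ is closed and convex it is weakly closed, so $\hat f\in\mathcal{R}$, and Rellich–Kondrachov (Theorem \ref{thm:rel}) gives strong $L^2$ convergence along this subsequence.

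The heart of the argument is (iii), which I would obtain from strong convexity; the same estimate retroactively makes the subsequence of (ii) converge strongly in $H^1$. Writing the $\mu$-strong convexity of $E^C$ (Proposition \ref{pro:strong}) at the midpoint of $f^*_{k_q}$ and $f^*$, and using that $\tfrac12(f^*_{k_q}+f^*)\in\mathcal{R}$ so that its energy is at least $E^C(f^*)$, I would derive $\tfrac{\mu}{8}\,|f^*_{k_q}-f^*|_{H^1(D_1)}^2\le \tfrac12\bigl(E^C(f^*_{k_q})-E^C(f^*)\bigr)$. The right-hand side tends to $0$ by (i), so $|f^*_{k_q}-f^*|_{H^1(D_1)}\to 0$, which is exactly (iii). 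Since this estimate applies verbatim to the entire sequence, one in fact gets $f^*_k\to f^*$ strongly in $H^1(D_1)$, so (ii) and (iii) both follow with no further work.

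The main obstacle is making the strong-convexity step legitimate, and I would spend care there. The quadratic form $E^C$ is only positive semidefinite in the gradient variables (its kernel is the conformal directions), so strong convexity cannot hold on all of $H^1(D_1)$; it holds only after restricting to $\mathcal{R}$, where the oriented-area term $\mathcal{A}(f(D_1))$ is a null Lagrangian determined solely by the common boundary data and is therefore constant, so that $E^C$ agrees with the coercive Dirichlet energy up to an additive constant. I would make this restriction explicit and combine it with Poincaré to pass between the $H^1$ seminorm and the full $H^1$ norm on the homogeneous difference space $\{h: h|_{\partial D_1}=0,\ h|_{O_i}=0,\ h|_{l_j}=0\}$, which is precisely where Proposition \ref{pro:strong} is invoked. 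A secondary technical point to verify is the interpolation identity $E^C_k(\Pi_{\mathcal{M}_k}f^*)=E^C(\Pi_{\mathcal{M}_k}f^*)$ underpinning the upper bound in (i), which rests on $\Pi_{\mathcal{M}_k}$ being a projection onto piecewise-affine maps.
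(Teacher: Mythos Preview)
Your argument is correct and, in the decisive step (iii), takes a genuinely different route from the paper. Both proofs handle (i) the same way---squeezing $E^C(f^*_k)$ between $E^C(f^*)$ and $E^C(\Pi_{\mathcal{M}_k}f^*)$---and both obtain the $H^1$ bound for (ii) via Poincar\'e and weak compactness. The divergence is in upgrading to strong $H^1$ convergence: the paper argues that weak convergence in $H^1$ together with convergence of the Dirichlet energies forces $\nabla f^*_{k_p}\to\nabla\phi^*$ strongly in $L^2$ (the Radon--Riesz/uniform-convexity mechanism), and then invokes uniqueness of the minimizer to identify $\phi^*=f^*$. You instead exploit Proposition~\ref{pro:strong} directly through the midpoint inequality, obtaining the quantitative estimate $\tfrac{\mu}{8}|f^*_{k}-f^*|_{H^1}^2\le\tfrac12\bigl(E^C(f^*_{k})-E^C(f^*)\bigr)$.

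Your route is shorter and yields more: it bypasses the weak-limit identification entirely and proves strong $H^1$ convergence of the \emph{full} sequence, not just subsequences, so that (ii) and (iii) become immediate corollaries of (i). The paper's approach, by contrast, is the standard direct-method template and would survive even if only strict (not strong) convexity were available. Your care about where strong convexity actually holds---on the affine set $\mathcal{R}$, after the null-Lagrangian area term has been frozen and Poincar\'e has been applied on the homogeneous difference space---is exactly the point that makes Proposition~\ref{pro:strong} usable, and it is well placed.
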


\begin{proof}
  Denote that $E^C_k\(f\) := E^C_{M_k}\(f\)$. By Lemma \ref{lemma:rep}, we have $\mathcal{S}_k = \{f\mid f=\Pi_{M_{k}}\phi, \phi\in\mathcal{R}\},\, k\in\mathbb{N}$, 
  so 
  $$
  \begin{aligned}
    \min_{f\in \mathcal{S}_k}{E^C_k}\(f\) = \min_{f\in \mathcal{R}}{E^C_k}\(f\) = \min_{f\in \mathcal{S}_k}{E^C}\(f\).
  \end{aligned}
  $$ 
  From Lemma \ref{lemma:increasing}, we know $\mathcal{S}_1\subseteq \mathcal{S}_2\subseteq \mathcal{S}_3\subseteq\dots\subseteq\mathcal{R}$, so 
  $$
  \begin{aligned}
    \min_{f\in \mathcal{S}_k}{E^C_k}\(f\) &= \min_{f\in \mathcal{S}_k}{E^C}\(f\) \le \min_{f\in \mathcal{S}_{k+1}}{E^C}\(f\)\\
    &=\min_{f\in \mathcal{S}_{k+1}}{E^C_{k+1}}\(f\),\,k\in\mathbb{N}.
  \end{aligned}
  $$
  Due to the existence and uniqueness in the continuous setting (Theorem \ref{thm:con}) and the discrete setting (Theorem \ref{thm:diss}), $E^C_k\(f^*_k\) = \min_{f\in \mathcal{S}_k}{E^C_k}\(f\)$ and $E^C\(f^*\) = \min_{f\in \mathcal{R}}{E^C}\(f\)$, so $\{E^C_k\(f^*_k\) \}_{k\in\mathbb{N}}$ is a decreasing sequence with lower bound $E^C\(f^*\)$. 
  By Lemma \ref{lemma:increasing}, there exists a sequence of mappings $\{\psi_k \in \mathcal{S}_k\}_{k\in\mathbb{N}}\subset \mathcal{R}$ such that $\lim_{k\rightarrow \infty} \psi_k = f^*$. 
  Since the functional $E^C$ is continuous, $\lim_{k\rightarrow \infty}E^C\(\psi_k\) = E^C\(f^*\)$. 
  It is obvious that $E^C\(\psi_k\) \ge E^C\(f^*_k\)$. 
  Therefore, we can get $\lim_{k\rightarrow \infty}E^C\(f^*_k\) = E^C\(f^*\)$ by dominated convergence theorem.

  Because $\lim_{k\rightarrow \infty}E^C\(f^*_k\) = E^C\(f^*\)$, 
  for any $\epsilon>0$, there exists $N \in \mathbb{N}^+$ such that $E^C\(f^*_k\) \le E^C\(f^*\)  + \epsilon$ for any $k\ge N\in\mathbb{N}^+$. 
  $\operatorname{int}\(\mathcal{D}_1\)\subset \mathbb{R}^2$ is bounded open set with Lipschitz boundary, and the measure of $\partial \mathcal{D}_1$ is 0.
  There exists $r>0$, such that the sequence of functions $\{f^*_{N + k}\}_{k\in\mathbb{N}}\subset B_r^{H^1\(\mathcal{D}_1\)}:=\{ f\in H^1\(\mathcal{D}_1\)\mid\left|f \right|_{H^1}<r \}$ due to Poincaré inequality. 
  Then, by Theorem \ref{thm:banach}, there exists a subsequence $\{f^*_{N + k_p}\}_{p\in\mathbb{N}}$ of $\{f^*_{N + k}\}_{k\in\mathbb{N}}$ such that $w\text{-}\lim_{p\rightarrow \infty}{f^*_{N + k_p}} = \phi^*$. 
  From Theorem \ref{thm:rel}, $\lim_{p\rightarrow \infty}{f^*_{N + k_p}} = \phi^*$ with respect to $L^2\(\mathcal{D}_1\)$ norm. 
  We have $\lim_{p\rightarrow \infty}{E^C\(f^*_{N + k_p}\)} = E^C \(\phi^*\)$. 
  In other words, $\lim_{p\rightarrow \infty}{Df^*_{N + k_p}} = D\phi^*$ with respect to $L^2\(\mathcal{D}_1\)$ norm. 
  Since $\left|f\right|_{H^1} = \sqrt{\left|f\right|^2_{L^2\(\mathcal{D}_1\)} + \left|Df\right|^2_{L^2\(\mathcal{D}_1\)}} \le \left|f\right|_{L^2\(\mathcal{D}_1\)} + \left|Df\right|_{L^2\(\mathcal{D}_1\)}$, we have $\lim_{p\rightarrow \infty}{f^*_{N + k_p}} = \phi^*$ in $H^1$.

  Because $\mathcal{R}$ is a close set in $H^1$, $\phi^*\in \mathcal{R}$. 
  By uniqueness of the minimizer of $E^C$ in $\mathcal{R}$, we have $\phi^*=f^*$. 
  Therefore, there exists a subsequence $\{f^*_{N + k_p}\}_{p\in\mathbb{N}}$ of $\{f^*_{k}\}_{k\in\mathbb{N}}$ such that $\lim_{p\rightarrow \infty}f^*_{N + k_p} = f^*$.
\end{proof}

\subsection{Analysis of bijection correction algorithm}
We have three questions about the bijection correction algorithm: Can we ensure the output map is a bijection? Does the algorithm converge? If so, what is the maximum iterative number of the algorithm? 
We will explore these three questions in this subsection.

In our algorithm, the output simplicial mapping of the conformal energy minimizer is guaranteed to be non-degenerate if the retargeting ratio $w \neq 0$.
Lipman has given a theorem in \cite{lipman2014bijective}, which ensures the bijection of non-degenerate orientation preserving simplicial mapping. 
\begin{theorem}[Theorem 1 in \cite{lipman2014bijective}]
  \label{theorem:ori}
  A non-degenerate orientation preserving simplicial mapping $f: M \rightarrow \mathbb{R}^d$ of a $d$-dimensional compact mesh with boundary $M$, $f: M \rightarrow M^\prime$ is a bijection if the boundary map $\left.f\right|_{\partial M}: \partial M \rightarrow \partial M^\prime$ is bijective.
\end{theorem}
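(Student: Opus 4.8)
The plan is to prove the statement via topological (Brouwer) degree theory for piecewise-linear maps, in the spirit of the bijectivity arguments of \cite{lipman2014bijective}. The guiding observation is that for an orientation-preserving non-degenerate simplicial map the \emph{signed} preimage count coincides with the \emph{unsigned} one, so the degree literally counts preimages; the boundary hypothesis then pins the degree down to $1$ inside the target and $0$ outside.

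First I would fix a generic target point $y \in \operatorname{int}(\mathcal{S}')$ lying off the image $f(\mathcal{S}^{(d-1)})$ of the $(d-1)$-skeleton, a set of measure zero. For such $y$ the fiber $f^{-1}(y)$ is finite and contained in the interiors of top-dimensional simplices, on each of which $f$ is an affine isomorphism. The topological degree is then $\deg(f,\mathcal{S},y)=\sum_{x\in f^{-1}(y)}\operatorname{sign}\det(Df|_x)$, and since $f$ is non-degenerate and orientation-preserving every term is $+1$, giving $\deg(f,\mathcal{S},y)=\#f^{-1}(y)$.

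Next I would invoke the two structural properties of the degree: it is locally constant on $\mathbb{R}^d\setminus f(\partial\mathcal{S})$, and for $y\notin f(\partial\mathcal{S})$ it depends only on the boundary map $f|_{\partial\mathcal{S}}$ (homotopy invariance, or equivalently the winding-number formula $\deg(f,\mathcal{S},y)=\deg\bigl(x\mapsto (f(x)-y)/|f(x)-y|\bigr)$). By hypothesis $f|_{\partial\mathcal{S}}:\partial\mathcal{S}\to\partial\mathcal{S}'$ is a bijection, hence a PL homeomorphism of the boundary $(d-1)$-manifold; by the Jordan--Brouwer separation theorem $\partial\mathcal{S}'$ splits $\mathbb{R}^d$ into the bounded component $\operatorname{int}(\mathcal{S}')$ and the unbounded exterior. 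On the exterior $y\notin f(\mathcal{S})$ forces $\deg=0$, while on $\operatorname{int}(\mathcal{S}')$ the orientation-preserving boundary homeomorphism gives $\deg=+1$. Combined with the preceding step, every generic interior point has exactly one preimage and every exterior point has none, so $f(\mathcal{S})=\mathcal{S}'$ and $f$ is injective on the full-measure set of generic points. To finish I would upgrade this almost-everywhere bijection to a genuine bijection: restricted to the open star of any simplex a non-degenerate simplicial map is open, so if two distinct interior points shared an image their neighborhoods would map onto overlapping open sets, producing generic points with two preimages and contradicting the count; the lower-dimensional skeleton and the boundary are then absorbed by continuity together with the injectivity of $f|_{\partial\mathcal{S}}$, and surjectivity onto $\mathcal{S}'$ follows from compactness (closedness of the image) and density of the generic interior points.

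I expect the main obstacle to be the degree computation of the middle step: setting up the PL degree rigorously, establishing its dependence only on $f|_{\partial\mathcal{S}}$, and evaluating it as $+1$ inside and $0$ outside via the separation theorem (cleanest when $\mathcal{S}$ is a topological $d$-ball, as for a rectangular domain). The concluding promotion from an almost-everywhere statement to a pointwise one on the closed complex, where non-degeneracy and the openness of the map are indispensable, is the second delicate point.
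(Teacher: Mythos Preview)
The paper does not supply its own proof of this theorem: it is quoted verbatim as Theorem~1 of \cite{lipman2014bijective} and used as a black box in the analysis of the bijection correction algorithm. There is therefore no ``paper's proof'' to compare against.

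That said, your degree-theoretic outline is correct and is precisely the route taken in the cited reference \cite{lipman2014bijective}. The three pillars you identify---(i) for a non-degenerate orientation-preserving simplicial map the degree at a regular value equals the unsigned preimage count, (ii) the degree depends only on the boundary map and is locally constant off $f(\partial\mathcal{S})$, (iii) a bijective boundary map onto $\partial\mathcal{S}'$ forces degree $1$ inside and $0$ outside---are exactly Lipman's argument. Your caveat that the cleanest version requires $\mathcal{S}$ to be a topological ball is well placed: Lipman's statement in full generality needs $\mathcal{S}'$ to be a simply connected region bounded by $f(\partial\mathcal{S})$, and in the present paper's setting ($\mathcal{S}$ a triangulated rectangle, $\mathcal{S}'$ another rectangle) this is automatic. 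The upgrade from generic to all points via openness of the restriction to open stars is also standard and correctly sketched.
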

Because the output mapping is orientation-preserving, we can ensure the output is a bijection by this theorem.

Then, we will show the algorithm's convergence and maximum iterative number.
\begin{definition}[Convex combination map]
Let $f: D \rightarrow \mathbb{R}^2$ be a simplicial mapping whose mesh is $M$, and suppose, for every interior vertex $\mathbf{v} \in \mathcal{V}(M)\setminus \mathcal{B}(M)$, that there exist weights $\lambda_{\mathbf{v} \mathbf{w}}>0$, for $\mathbf{w} \in \mathcal{N}_\mathbf{v}^1$, such that
$$
\sum\limits_{\mathbf{w} \in \mathcal{N}_\mathbf{v}^1} \lambda_{\mathbf{v} \mathbf{w}}=1
$$
and
$$
f(\mathbf{v})=\sum\limits_{\mathbf{w} \in \mathcal{N}_\mathbf{v}^1} \lambda_{\mathbf{v} \mathbf{w}} f(\mathbf{w}),
$$
in which $\mathcal{N}_\mathbf{v}^1 = \{\mathbf{w}\in \mathcal{V}(M) \mid \left[ \mathbf{v}, \mathbf{w} \right]\in \mathcal{E}\(M\)\}$.
We will call $f$ a convex combination map.
\end{definition}

\begin{proposition}[Proposition 3.2 in \cite{floater2003one}]
  \label{pro:co}
The discrete conformal energy minimizer $f_{M}$ with only boundary constraints on mesh $M$ is a convex combination map if and only if $M$ is a finite Delaunay triangulation and no quadrilateral of $M$ has cocircular vertices.
\end{proposition}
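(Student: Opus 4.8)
The plan is to identify the conformal-energy minimizer carrying only boundary constraints with the discrete harmonic map governed by the cotangent Laplacian $L$ of \ref{Lap}, and then to read the convex-combination structure directly off the rows of $L$. With the interior free, the first-order optimality conditions reduce to the discrete harmonic equation $\sum_j L_{ij}\,f(\mathbf{v}_j)=0$ at every interior vertex $\mathbf{v}_i$. Using $L_{ii}=-\sum_{j\neq i}L_{ij}$ this rearranges to
\begin{equation*}
  f(\mathbf{v}_i)=\sum_{\mathbf{v}_j\in\mathcal{N}_{\mathbf{v}_i}^{1}}\lambda_{ij}\,f(\mathbf{v}_j),\qquad
  \lambda_{ij}:=\frac{-L_{ij}}{L_{ii}},\qquad \sum_j\lambda_{ij}=1 .
\end{equation*}
Thus the minimizer is always an affine-combination map with these canonical weights, and it is a \emph{convex}-combination map exactly when every $\lambda_{ij}$ is strictly positive, i.e.\ exactly when every off-diagonal entry $-L_{ij}=\tfrac12(\cot\alpha_{ij}+\cot\alpha_{ji})$ from Lemma \ref{lemma:<0} is positive.

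Next I would record the elementary identity $\cot\alpha_{ij}+\cot\alpha_{ji}=\sin(\alpha_{ij}+\alpha_{ji})/(\sin\alpha_{ij}\sin\alpha_{ji})$. Since each opposite angle lies in $(0,\pi)$ the denominator is positive, so the edge weight is positive precisely when $\alpha_{ij}+\alpha_{ji}<\pi$. By the inscribed-angle characterization of the empty-circumcircle property, an interior edge $[i,j]$ is Delaunay if and only if $\alpha_{ij}+\alpha_{ji}\le\pi$, with equality exactly when the four vertices of the two triangles sharing $[i,j]$ are cocircular. Consequently the statement ``$\mathcal{S}$ is a Delaunay triangulation and no quadrilateral of $\mathcal{S}$ has cocircular vertices'' is equivalent to ``$\alpha_{ij}+\alpha_{ji}<\pi$ for every interior edge,'' which by the previous paragraph is equivalent to ``all canonical weights $\lambda_{ij}$ are strictly positive.''

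The ($\Leftarrow$) direction is then immediate: strict Delaunayness forces $-L_{ij}>0$ on every edge, hence $\lambda_{ij}>0$, so $f_{\mathcal{S}}$ is a convex-combination map realized by the cotangent weights themselves. For ($\Rightarrow$) I would argue by contraposition: if $\mathcal{S}$ fails strict Delaunayness then some interior edge has $\alpha_{i_0 j_0}+\alpha_{j_0 i_0}\ge\pi$, so $\lambda_{i_0 j_0}\le 0$ in the canonical representation. The remaining step --- converting one non-positive \emph{canonical} weight into the assertion that \emph{no} admissible positive weighting exists at $\mathbf{v}_{i_0}$ --- is where the real work lies, since the convex-combination weights are not unique once $\deg(\mathbf{v}_{i_0})>3$. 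I expect this to be the main obstacle, and I would attack it by first settling the degree-three case, where the three images $\{f(\mathbf{v}_j)\}$ are affinely independent, so the solution of $f(\mathbf{v}_{i_0})=\sum_j\mu_j f(\mathbf{v}_j),\ \sum_j\mu_j=1$ is unique and equals $\lambda$, immediately contradicting $\mu_j>0$; I would then try to reduce the higher-degree case to it by exploiting the affine independence of $\{(f(\mathbf{v}_j),1)\}$ together with the coincidence of the canonical and convex representations, or, where such genericity fails, by perturbing the boundary data $g$ so that $f(\mathbf{v}_{i_0})$ is driven out of the open convex hull of its neighbours' images.
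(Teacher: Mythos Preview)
The paper states Proposition \ref{pro:co} without proof, so there is no argument to compare against. Your $\Leftarrow$ direction is correct and is essentially the computation already recorded in Lemma \ref{lemma:<0}: strict Delaunayness forces every cotangent weight $-L_{ij}=\tfrac12(\cot\alpha_{ij}+\cot\alpha_{ji})$ to be positive, and since $L_{ii}=-\sum_{j\neq i}L_{ij}>0$ the normalized weights $\lambda_{ij}$ are positive and sum to one. That is all the paper actually uses downstream (in the corollary on the bijection correction), so for the paper's purposes you are done.

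For the $\Rightarrow$ direction you have put your finger on a genuine gap, and I do not think your proposed fixes close it. The definition of convex-combination map in the paper is existential in the weights, so showing that the canonical cotangent weights fail does not, by itself, rule out other positive weights. Your degree-three argument is fine, but the reduction of higher degree to degree three via ``affine independence of $\{(f(\mathbf{v}_j),1)\}$'' cannot work: the images $f(\mathbf{v}_j)$ live in $\mathbb{R}^2$, so once $\deg(\mathbf{v}_{i_0})\ge 4$ the vectors $(f(\mathbf{v}_j),1)\in\mathbb{R}^3$ are automatically linearly dependent and the barycentric representation is non-unique. Your alternative of perturbing $g$ silently changes the statement from one about \emph{the} minimizer $f_{\mathcal{S}}$ to one quantified over all boundary data; under that reading the contrapositive is provable (choose $g$ so the images of the neighbours of $\mathbf{v}_{i_0}$ are affinely independent, or so that $f(\mathbf{v}_{i_0})$ lands outside their open convex hull), but as literally stated --- for a single, fixed but unspecified boundary datum --- the $\Rightarrow$ implication can fail (take $g$ affine, so $f_{\mathcal{S}}$ is globally affine and every interior vertex is trivially a strict convex combination of its neighbours regardless of the triangulation). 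In short: your $\Leftarrow$ is solid and suffices for the paper's needs; the $\Rightarrow$ half, as the proposition is phrased, is not salvageable without reinterpreting the quantification over boundary data, and you should say so explicitly rather than attempt the reduction you sketch.
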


\begin{theorem}[Theorem 6.1 in \cite{floater2003one}]
  \label{thm:one}
  Suppose $\mathcal{S}$ is any triangulation and that $f: D \rightarrow \mathbb{R}^2$ is a convex combination mapping which maps $\partial D$ homeomorphically into the boundary $\partial \Omega$ of some convex region $\Omega \subset \mathbb{R}^2$. 
  Then, $\phi$ is one-to-one if and only if no dividing edge $[v, w]$ of $\mathcal{S}$ is mapped by $f$ into $\partial \Omega$, 
  where a dividing edge $\left[\mathbf{v}, \mathbf{w}\right] \in \mathcal{E}(\mathcal{S})$ is an interior edge and both $\mathbf{v}$ and $\mathbf{w}$ are boundary vertices.
\end{theorem}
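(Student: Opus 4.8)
The plan is to build everything on a single discrete maximum principle and then separate the two implications, handling the nontrivial ``if'' direction through a no-foldover analysis followed by a degree argument.

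First I would record the central lemma: for any fixed $a\in\mathbb{R}^2$ the scalar function $u_a(\mathbf{v}):=a\cdot f(\mathbf{v})$ is discrete harmonic at every interior vertex, since the convex-combination identity gives $u_a(\mathbf{v})=\sum_{\mathbf{w}\in\mathcal{N}^1_\mathbf{v}}\lambda_{\mathbf{v}\mathbf{w}}\,u_a(\mathbf{w})$ with positive weights summing to one. Hence $u_a$ satisfies the strong discrete maximum principle on the connected complex $\mathcal{S}$: it attains its extrema only on $\partial D$, and if an interior vertex attains an extremal value then $u_a$ is constant. Taking $a$ normal to a supporting line of $\Omega$ immediately yields that every interior vertex maps into $\Omega$, and, more importantly, that $f(\mathbf{v})$ lies in the relative interior of the convex hull of its one-ring images unless that whole star collapses onto a line.

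For the ``only if'' direction I would argue by contraposition. If a dividing edge $[\mathbf{v},\mathbf{w}]$ is mapped into $\partial\Omega$, then, $\Omega$ being convex, its image lies on a single supporting line $\ell$ of $\Omega$. The dividing edge splits the disk $D$ into two subcomplexes, and at least one of them, say $D_1$, has positive area while its entire topological boundary --- an arc of $\partial D$ together with $[\mathbf{v},\mathbf{w}]$ --- is mapped onto $\ell$. Applying the strong maximum principle to the signed distance to $\ell$ on $D_1$ forces it to vanish identically there, i.e.\ $f$ collapses the positive-area region $D_1$ onto a segment; a piecewise-linear homeomorphism cannot do this, so $f$ is not one-to-one.

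For the ``if'' direction I would first prove that $f$ has no foldovers, meaning every triangle is nondegenerate and all are consistently oriented. The natural route is the global sign-counting (index) argument in the spirit of \cite{floater2003one}: assign to each interior vertex $\mathbf{v}$ the number of sign changes of $u_a(\mathbf{w})-u_a(\mathbf{v})$ as $\mathbf{w}$ traverses the link of $\mathbf{v}$, assign a complementary index to each face, and sum these indices over the disk via an Euler-characteristic identity. The maximum principle forces at least two sign changes at each interior vertex, and the global count forces exactly two, which says precisely that the triangle fan about each interior vertex winds once around $f(\mathbf{v})$; at a boundary vertex the relevant half-fan is controlled by the hypothesis that no dividing edge maps into $\partial\Omega$, which is exactly what prevents the boundary link from folding back onto $\partial\Omega$. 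Once local injectivity and consistent orientation are in hand, I would finish by a degree argument: since $f|_{\partial D}$ is a homeomorphism onto the convex Jordan curve $\partial\Omega$, the topological degree of $f$ equals $+1$ over each point of $\operatorname{int}\Omega$ and $0$ over each exterior point, so with local injectivity every interior point has a unique preimage and $f$ is a bijection of $D$ onto $\Omega$. The hard part will be this no-foldover step, and specifically the bookkeeping at boundary vertices: the interior maximum principle cleanly excludes interior folds and degeneracies, but the one place non-injectivity can enter is a boundary vertex whose half-fan folds onto $\partial\Omega$, and showing that the single condition ``no dividing edge maps into $\partial\Omega$'' is exactly what excludes this --- while reconciling it with the global index identity --- is the delicate combinatorial-geometric core of the proof.
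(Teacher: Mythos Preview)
The paper does not supply a proof of this statement: it is quoted verbatim as Theorem~6.1 of Floater~\cite{floater2003one} and used as a black box in the subsequent corollary. So there is no in-paper proof to compare against.

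That said, your sketch is essentially Floater's original argument, and it is sound. For the ``only if'' direction your contraposition works, but one step deserves a word of justification you glossed over: when you restrict the discrete maximum principle to the subcomplex $D_1$, you need that every vertex $p$ in the topological interior of $D_1$ has its full one-ring contained in $D_1$. This holds because the dividing edge $[\mathbf{v},\mathbf{w}]$ genuinely separates $D$, so no triangle of the star of $p$ can cross into $D_2$; hence the convex-combination identity at $p$ involves only vertices of $D_1$ and the maximum principle is self-contained there. You should also remark that \emph{both} pieces $D_1,D_2$ have positive area (each contains at least one triangle), and that exactly one of the two boundary arcs of $\partial D$ from $\mathbf{v}$ to $\mathbf{w}$ is mapped by the boundary homeomorphism onto the chord $[f(\mathbf{v}),f(\mathbf{w})]\subset\ell$; that arc selects the correct $D_1$.

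For the ``if'' direction, the index/sign-change count you describe is precisely Floater's mechanism, and your identification of the boundary-vertex bookkeeping as the crux is accurate: in Floater's paper this is handled by augmenting the boundary half-fans so that the Euler-characteristic identity closes, and the ``no dividing edge maps into $\partial\Omega$'' hypothesis is exactly what prevents a boundary link from degenerating onto the supporting line. Your plan to conclude with a degree argument after establishing local injectivity and consistent orientation is the standard finishing move and is correct.
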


\begin{corollary}
\label{cor}
  If Assumption \ref{ass:roi} and Assumption \ref{ass:co} hold, the iterative bijection correction algorithm will converge in $n_c$ steps, where $n_c := \min \{n \mid \mathcal{O}(M) \cup \mathcal{L}(M) \setminus \( \bigcup_{k=1}^n \mathcal{N}^k \) = \varnothing\} + 1 <n_v$.
\end{corollary}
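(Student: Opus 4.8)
The plan is to prove two things separately: first, that the cumulative relaxation set grows quickly enough to delete every ROI and line constraint after finitely many steps, and second, that once only the boundary constraint survives the corrected map is forced to be an orientation-preserving bijection, so the while loop must already have terminated. Combining the two yields the count $n_c$.

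First I would track the cumulative relaxation set $U_n := \bigcup_{k=1}^{n}\mathcal{N}^k \subseteq \mathcal{V}(\mathcal{M})$, which is exactly the set of vertices whose ROI/line constraints have been switched off in \ref{equ:bi} by step $n$. Since $f_\mathcal{M}$ is not orientation-preserving there is a flipped triangle and $\mathcal{N}^0\ne\varnothing$, while the recursion $\mathcal{N}^{k+1}=\{\mathbf{v}\mid [\mathbf{v},\mathbf{w}]\in\mathcal{E}(\mathcal{M}),\,\mathbf{w}\in\mathcal{N}^k\}$ is the one-ring graph dilation. Because $\mathcal{M}$ is finite and connected, $\{U_n\}$ is non-decreasing and strictly grows until it saturates the whole vertex set; in particular it absorbs the finite set $\mathcal{O}(\mathcal{M})\cup\mathcal{L}(\mathcal{M})$ after finitely many steps, so $m:=\min\{n\mid \mathcal{O}(\mathcal{M})\cup\mathcal{L}(\mathcal{M})\setminus U_n=\varnothing\}$ is well defined. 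Since $U_n$ gains at least one new vertex per step until saturation and $\mathcal{O}(\mathcal{M})\cup\mathcal{L}(\mathcal{M})$ is a proper subset of $\mathcal{V}(\mathcal{M})$ (it excludes $\mathcal{B}(\mathcal{M})$, which always contains the four corners), I obtain $m+1<n_v$, i.e. $n_c=m+1<n_v$.

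Next I would identify the map $\phi^m_\mathcal{M}$ produced at step $m$. By the choice of $m$, the clauses of \ref{equ:bi} imposed on $O_i(\mathcal{M})\setminus U_m$ and $l_j(\mathcal{M})\setminus U_m$ are vacuous; only the boundary clause $\phi^m_\mathcal{M}=g$ on $\mathcal{B}(\mathcal{M})$ survives, and the harmonic clause $\partial E^C_\mathcal{M}/\partial\phi^m_\mathcal{M}(\mathbf{v}_\ell)=\mathbf{0}$ now holds at \emph{every} non-boundary vertex. Hence $\phi^m_\mathcal{M}$ is precisely the discrete conformal-energy minimizer subject to the single boundary constraint $g$. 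Invoking Assumption \ref{ass:co} together with the Delaunay structure of $\mathcal{M}$, Proposition \ref{pro:co} then tells me that $\phi^m_\mathcal{M}$ is a convex combination map.

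Finally I would apply Floater's Theorem \ref{thm:one} with the convex region $\Omega=D_2$: the rectangle $D_2$ is convex and $g$ carries $\partial D_1$ homeomorphically onto $\partial D_2$ with corners to corners. The one hypothesis left to verify---and the crux of the whole argument---is that no dividing edge is mapped into $\partial D_2$. Here the rectangular geometry does the work: a dividing edge is an interior edge with both endpoints on $\partial\mathcal{M}$, so its endpoints cannot lie on one common straight side of $D_1$ (else the segment would run along that side and be a boundary, not interior, edge); they therefore sit on distinct sides, $g$ sends them to distinct sides of $\partial D_2$, and the image is a genuine interior chord rather than a sub-arc of $\partial D_2$. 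With the dividing-edge condition confirmed, Theorem \ref{thm:one} makes $\phi^m_\mathcal{M}$ one-to-one; since $w\ne 0$ forces non-degeneracy, a one-to-one simplicial map onto the convex $D_2$ whose boundary map $g$ is positively oriented has no flipped triangle and is thus orientation-preserving (the converse companion to Theorem \ref{theorem:ori}). Consequently $\phi^m_\mathcal{M}$ passes the loop's stopping test, the algorithm halts having produced $\phi^0,\dots,\phi^m$, i.e. in $n_c=m+1<n_v$ steps. I expect the dividing-edge verification to be the only genuinely delicate point; the rest is finite-graph bookkeeping plus direct citation of the convex-combination embedding results.
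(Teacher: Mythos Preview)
Your proposal is correct and follows the same architecture as the paper: once the relaxation has absorbed all ROI and line vertices, only the boundary constraint survives, Proposition~\ref{pro:co} makes the resulting map a convex combination map, and Floater's Theorem~\ref{thm:one} gives injectivity, hence orientation preservation and termination. The one place you diverge is the dividing-edge hypothesis of Theorem~\ref{thm:one}: the paper handles it by a discrete maximum-principle argument, writing every interior point's image as a strict convex combination of boundary images and concluding $\phi^{n_c}_{\mathcal{M}}(\operatorname{int}D_1)\subseteq\operatorname{int}D_2$, whereas you argue directly from the rectangular geometry that the two boundary endpoints of a dividing edge must sit on distinct sides of $D_1$ and hence are sent by $g$ to distinct sides of $\partial D_2$. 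Your version is more elementary and specific to the rectangle; the paper's is closer to the general convex-combination theory but is stated a bit loosely (the strictness $\lambda_{\mathbf{v}\mathbf{w}}\in(0,1)$ it asserts for \emph{all} boundary vertices is stronger than what actually holds on a dividing edge). You also supply the finiteness of $n_c$ and the bound $n_c<n_v$, which the paper states but does not argue.
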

\begin{proof}
  In the $n_c$-th iteration, $\mathcal{O}(M)\cup\mathcal{L}(M)\setminus \mathcal{N}^{n_c-1}= \varnothing$, so we only have boundary constraints for $\phi^{n_c}_M$. 
  If Assumption \ref{ass:roi} holds, $\phi^{n_c}_{M}$ maps $\partial \mathcal{D}_1$ homeomorphically into the boundary $\partial \mathcal{D}_2$ of the convex region $\mathcal{D}_2$. 
  If Assumption \ref{ass:co} holds, we can get $\phi^{n_c}_{M}$ is a convex combination map by Proposition \ref{pro:co}. 
  Additionally, $M$ is simply connected mapping. 
  Therefore, for any point $\mathbf{v} \in \operatorname{int}(\mathcal{D}_1)$, there exist $\lambda_{\mathbf{v}\mathbf{w}} \in (0,1)$, such that $\phi^{n_c}_M(\mathbf{v}) = \sum\limits_{\mathbf{w}\in\mathcal{B}(M)} {\lambda_{\mathbf{v}\mathbf{w}} \phi^{n_c}_{M(\mathbf{w})}}$ and $\sum\limits_{\mathbf{w}}{\lambda_{\mathbf{v}\mathbf{w}}} = 1$. 
  Denote that $\phi^{n_c}_{M}(\mathbf{v}) = (u_{\mathbf{v}},v_{\mathbf{v}})$ and $\phi^{n_c}_{M}(\mathbf{w}) = (u_{\mathbf{w}},v_{\mathbf{w}})$, so $(u_{\mathbf{v}},v_{\mathbf{v}}) = \sum\limits_{\mathbf{w}\in\mathcal{B}(M)} {\lambda_{\mathbf{v}\mathbf{w}} (u_{\mathbf{w}},v_{\mathbf{w}})}$. 
  We can get $\min \{u_\mathbf{w}  \mid\mathbf{w} \in \mathcal{B}(M) \} < u_\mathbf{v} < \max \{ u_\mathbf{w}  \mid\mathbf{w} \in \mathcal{B}(M)\}$ and $\min \{v_\mathbf{w}  \mid \mathbf{w} \in \mathcal{B}(M) \} < v_\mathbf{v} < \max \{ v_\mathbf{w}  \mid\mathbf{w} \in \mathcal{B}(M)\}$, 
  so we have $f_M\(\operatorname{int}(\mathcal{D}_1)\)\subseteq f_M\(\operatorname{int}(\mathcal{D}_2)\)$. 
  We can ensure there is no dividing edge being mapped into $\partial \mathcal{D}_2$. 
  By Theorem $\ref{thm:one}$, we can get $\phi^{n_c}_{M}$ is a one-to-one mapping, 
  so $\phi^{n_c}_{M}$ is orientation-preserving and the algorithm converges.
\end{proof}

\section{Experimental results}
\label{sec:4}
\begin{table}[htbp]

\centering
\begin{tabular}{l|cc|cc}
\hline
\hline
\textbf{Ratio} & \multicolumn{2}{c|}{\textbf{Avg. Conformal Energy ($E^C_M$)}} & \multicolumn{2}{c}{\textbf{Avg. Flips (\textperthousand)}} \\
\cline{2-5}
               & \textbf{Our Method} & \textbf{BR \cite{lau2018image}} & \textbf{Our Method} & \textbf{BR \cite{lau2018image}} \\
\hline
0.50           & \textbf{184,568.60} & 240,671.72 & \textbf{0.00} & 21.08 \\
0.75           & \textbf{51,321.07}  & 52,503.28  & \textbf{0.00} & 9.97  \\
\hline
\hline
\end{tabular}
\caption{Quantitative comparison between our method and the BR method on the RetargetMe dataset (87 images). Our method achieves lower distortion energy and, crucially, guarantees bijectivity (zero flips).}
\label{tab}
\end{table}
We have tested the proposed algorithm on various images in the $RetargetMe$ Dataset \cite{rubinstein2010comparative}.
The results show that our model can preserve ROIs and line structures in images while changing the local geometry of the object as little as possible. {Similar to the Beltrami representation (BR) method \cite{lau2018image}, the core of our algorithm involves solving a sparse linear system, ensuring high computational efficiency. To provide a more comprehensive quantitative evaluation, we compare our algorithm with the BR method across all 87 images in the dataset for different aspect ratios. We measure two key metrics: the average conformal energy of the warping map($E^C_M$) on mesh $M$, where lower values indicate less geometric distortion, and the rate of flipped triangles (Flips ‰), which measures the failure of bijectivity. Zero flips indicate a valid, fold-over-free mapping. As summarized in Table~\ref{tab}, our method consistently achieves lower conformal energy. Crucially, it produces zero flipped triangles across all test cases, empirically verifying its bijective nature. In contrast, the BR method's failure to guarantee bijectivity results in a significant number of flipped triangles, particularly under strong compression (21.08‰ at a 0.5 ratio), which is the direct cause of visual artifacts.}

These quantitative findings are visually substantiated by qualitative examples. he non-bijective nature of the BR method leads to geometric instabilities, resulting in fold-overs and content distortion. This is obvious in the results for a skiing image (Fig.~\ref{fig:ski}) and a lotus image (Fig.~\ref{fig:lotus}). For the skiing image, the BR result in Fig.~\ref{fig:ski}(c) exhibits unnatural distortions in the cloud structures. For the lotus image, the BR result in Fig.~\ref{fig:lotus}(c) not only truncates the right side of the lotus but also shows fold-overs near the boundary. In contrast, our method produces smooth, bijective mappings in both cases (Fig.~\ref{fig:ski}(d) and Fig.~\ref{fig:lotus}(d)), preserving the geometric integrity and completeness of the main subjects without introducing artifacts.

\begin{figure*}[htbp]
  \centering
  \begin{subfigure}[htbp]{0.45\textwidth}
    \includegraphics[width=0.97\textwidth, keepaspectratio]{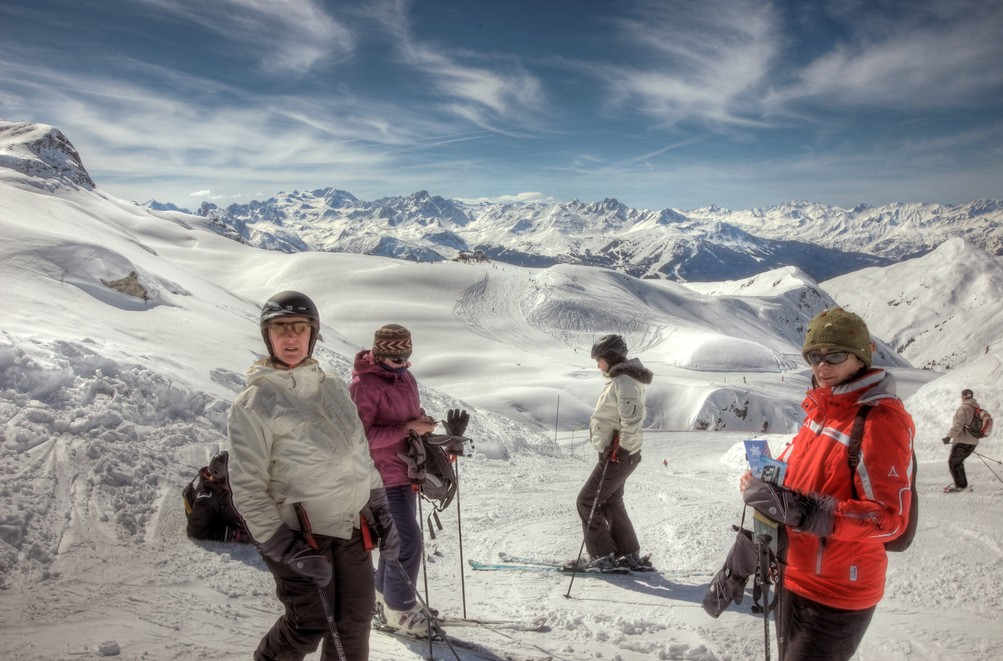}
    \caption{Original image}
  \end{subfigure}%
  \begin{subfigure}[htbp]{0.45\textwidth}
    \includegraphics[width=0.97\textwidth, keepaspectratio]{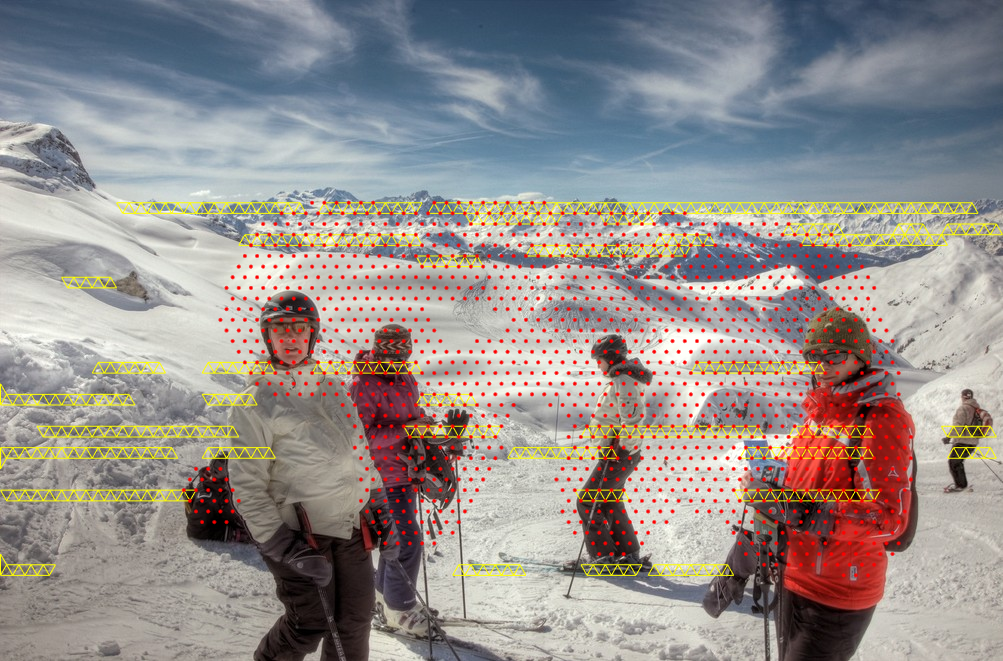}
    \caption{ROIs (red) and line structures (yellow)}
  \end{subfigure}%
    \\
  \begin{subfigure}[htbp]{0.45\textwidth}
    \includegraphics[width=0.97\textwidth, keepaspectratio]{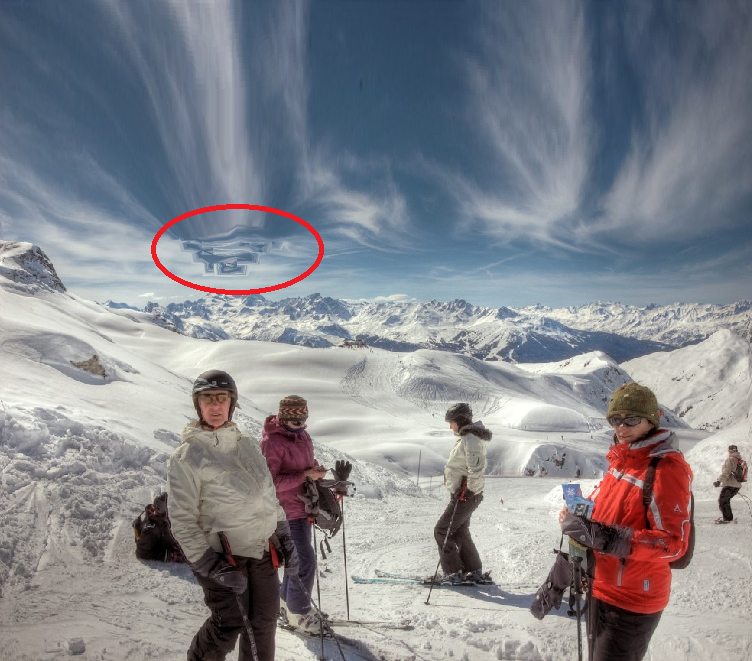}
    \caption{result of BR\cite{lau2018image}}
  \end{subfigure}%
  \begin{subfigure}[htbp]{0.45\textwidth}
    \includegraphics[width=0.97\textwidth, keepaspectratio]{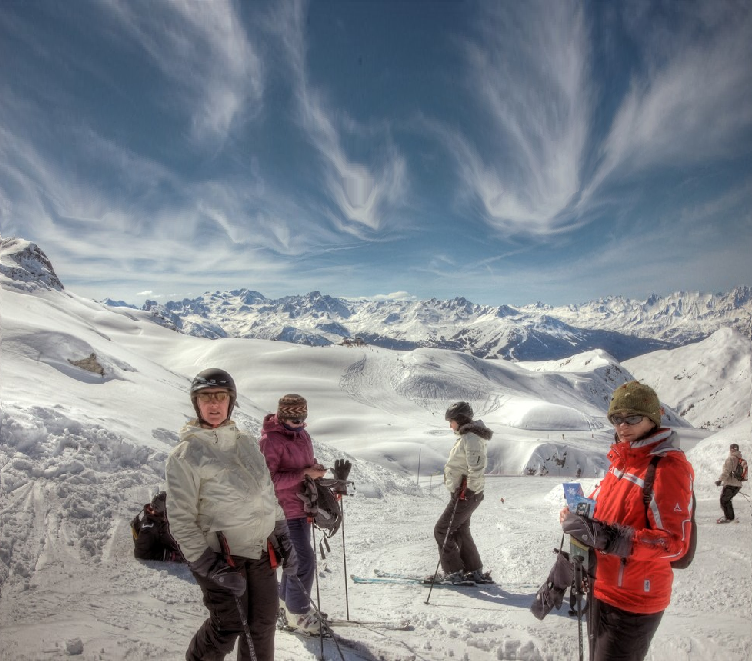}
    \caption{result of our model}
  \end{subfigure}%
\caption{Comparison with BR and our algorithm to resize a skiing image to 75\% of the original width.}
\label{fig:ski}
\end{figure*}
\begin{figure*}[htbp]
\label{fig:lotus}
  \centering
  \begin{subfigure}[htbp]{0.45\textwidth}
    \includegraphics[width=0.97\textwidth, keepaspectratio]{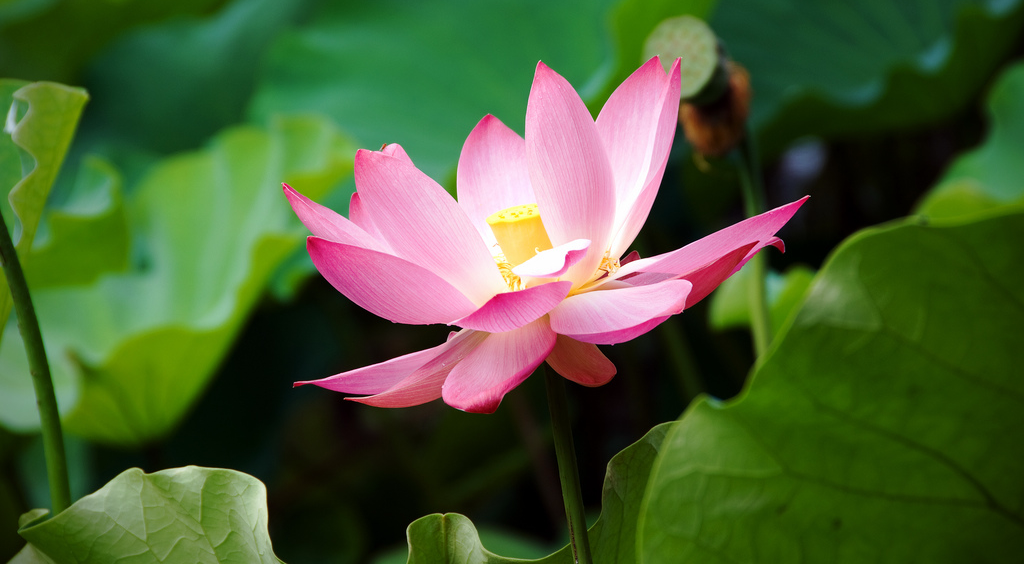}
    \caption{Original image}
  \end{subfigure}%
  \begin{subfigure}[htbp]{0.45\textwidth}
    \label{fig:girl_roi}
    \includegraphics[width=0.97\textwidth, keepaspectratio]{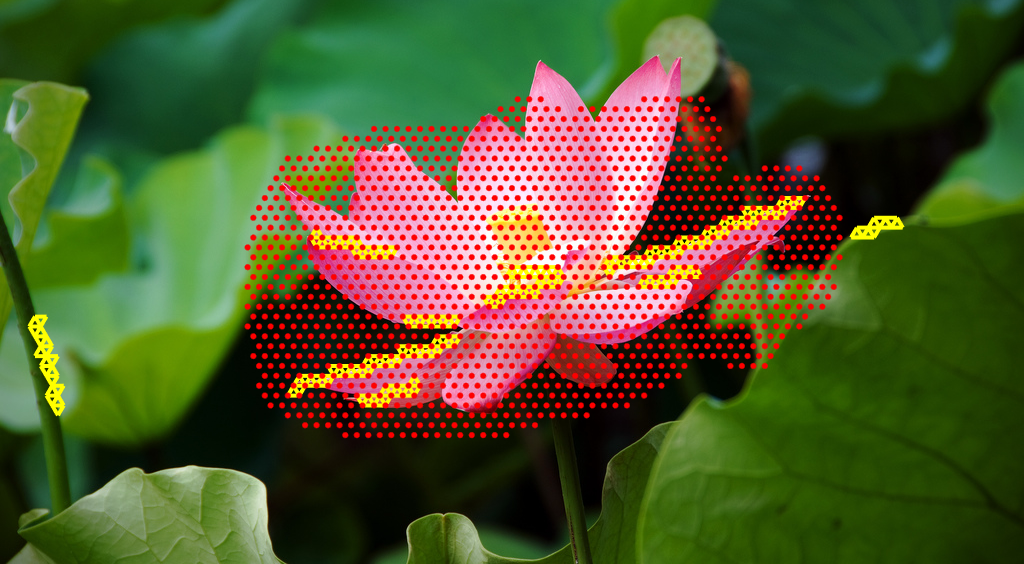}
    \caption{ROIs (red) and line structures (yellow)}
  \end{subfigure}%
    \\
  \begin{subfigure}[htbp]{0.45\textwidth}
    \label{fig:girl_roi}
    \includegraphics[width=0.97\textwidth, keepaspectratio]{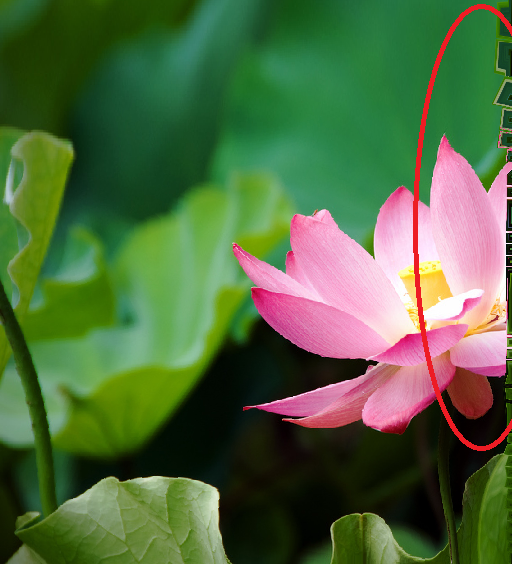}
    \caption{result of BR\cite{lau2018image}}
  \end{subfigure}%
  \begin{subfigure}[htbp]{0.45\textwidth}
    \includegraphics[width=0.97\textwidth, keepaspectratio]{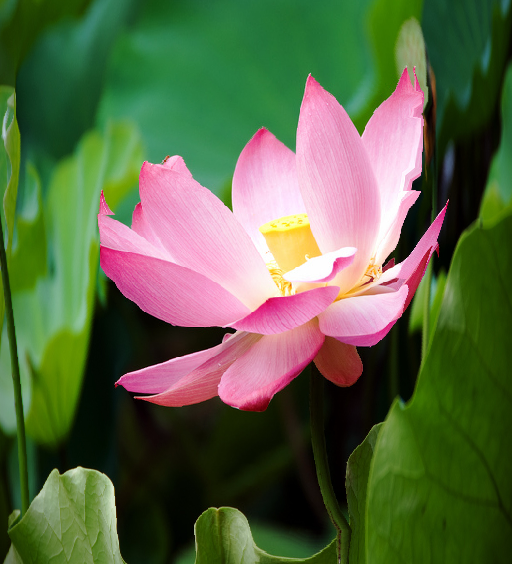}
    \caption{result of our model}
  \end{subfigure}%
\caption{Comparison with BR and our algorithm to resize a lotus image to 50\% of the original width.}
\label{fig:lotus}
\end{figure*}
\iffalse
We can extend our algorithm into an automatic one.  
We choose those parts with the top 25\% saliency scores as our regions of interest (ROIs). 
Here, we give an example \ref{fig:auto}. 
The example shows that our model can deal with the retargeting problem, whose ratio is 0.5, very well.
\begin{figure*}[htbp]
  \centering
  \begin{subfigure}[htbp]{0.28\textwidth}
      \includegraphics[width=0.97\textwidth, keepaspectratio]{lotus_Ori.png}
      \caption{Original image}
  \end{subfigure}%
  \begin{subfigure}[htbp]{0.28\textwidth}
      \includegraphics[width=0.97\textwidth, keepaspectratio]{lotus_SM.png}
      \caption{The saliency map \cite{harel2006graph}}
  \end{subfigure}%
  \begin{subfigure}[htbp]{0.28\textwidth}
      \includegraphics[width=0.97\textwidth, keepaspectratio]{lotus_ROI.png}
      \caption{ROI}
  \end{subfigure}%
  \begin{subfigure}[htbp]{0.14\textwidth}
      \includegraphics[width=0.97\textwidth, keepaspectratio]{lotus_Our.png}
      \caption{Result}
  \end{subfigure}%
\caption{Automatic retargeting algorithm to resize a lotus image to 50\% of the original width.}
\label{fig:auto}
\end{figure*}
\fi

%We can also delete this part.
%%
We find that the performance of our algorithm depends on the proper selection of ROIs and line structures. 
The reflection in our boat picture \ref{fig:boat} looks unnatural because they are not selected as ROIs. 
The results also show the outstanding capacity for retargeting of our model in case of proper selection of ROIs and line structures. A good ROI and line structure detector is 
\begin{figure*}[htbp]
  \centering
  \begin{subfigure}[htbp]{0.66\textwidth}
      \includegraphics[width=0.97\textwidth, keepaspectratio]{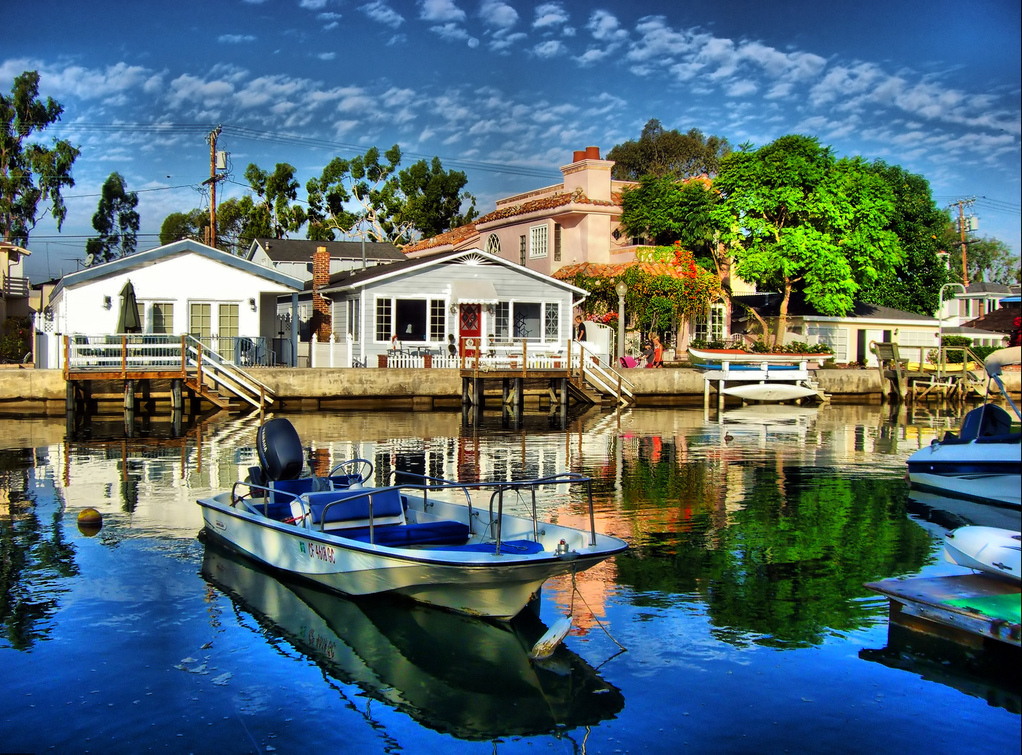}
      \caption{OI}
  \end{subfigure}%
  \begin{subfigure}[htbp]{0.33\textwidth}
      \includegraphics[width=0.97\textwidth, keepaspectratio]{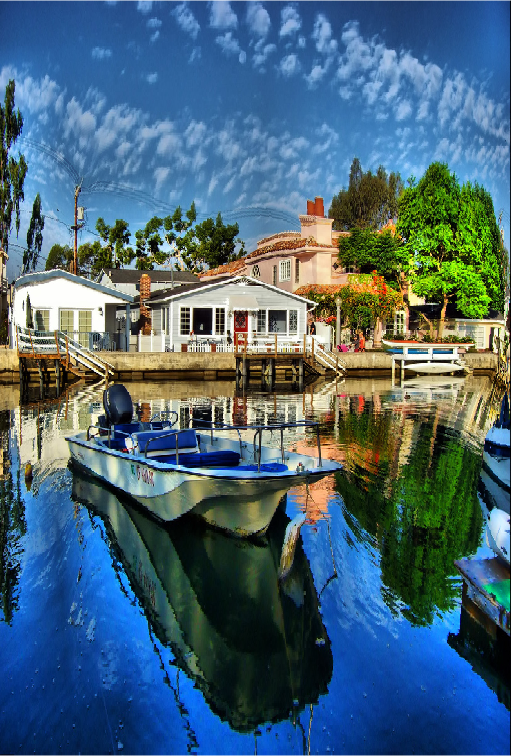}
      \caption{Our}
  \end{subfigure}%

\caption{Resizing a Canalhouse image to 50\% of the original width.}
\label{fig:boat}
\end{figure*}

\begin{figure*}[htbp]
  \centering
  \captionsetup[subfigure]{labelformat=empty}
  \begin{subfigure}[htbp]{0.33\textwidth}
      \includegraphics[width=0.97\textwidth, keepaspectratio]{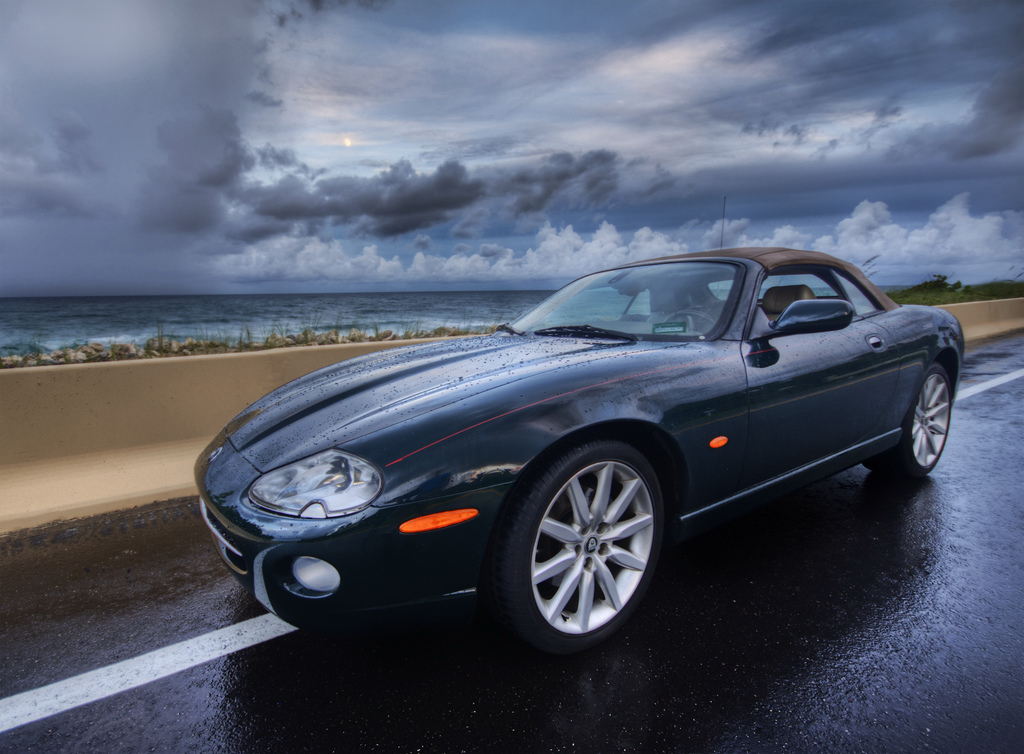}
      \caption{OI}
  \end{subfigure}%
  \begin{subfigure}[htbp]{0.33\textwidth}
      \includegraphics[width=0.97\textwidth, keepaspectratio]{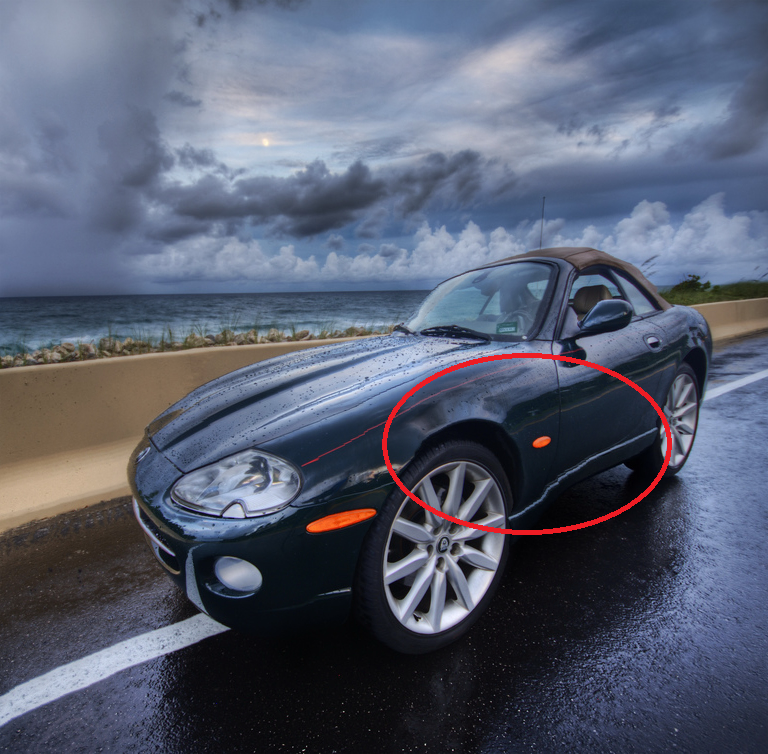}
      \caption{SC\cite{rubinstein2008improved}}
  \end{subfigure}%
  \begin{subfigure}[htbp]{0.33\textwidth}
      \includegraphics[width=0.97\textwidth, keepaspectratio]{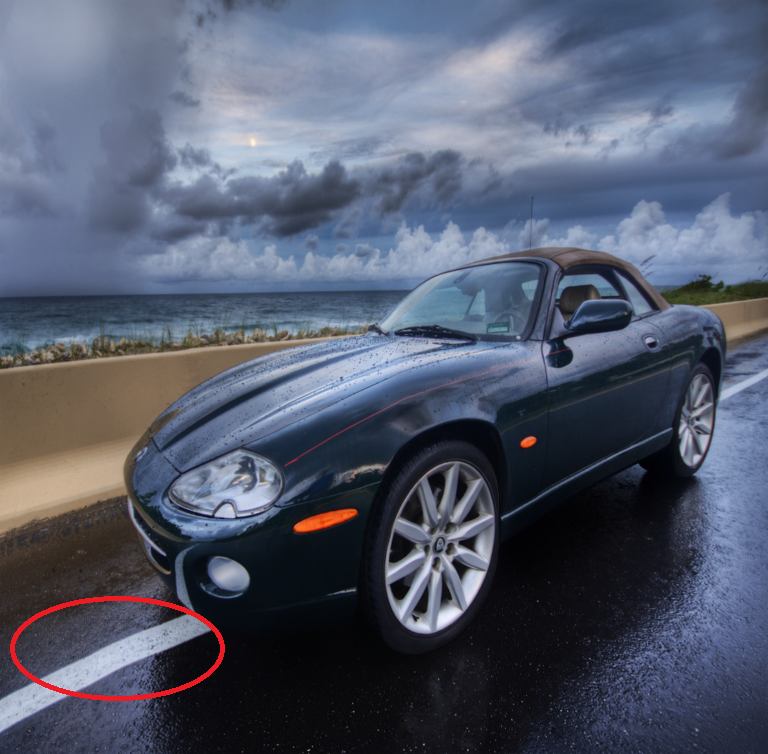}
      \caption{Warp\cite{wolf2007nonhomogeneous}}
  \end{subfigure}%
  \\
  \begin{subfigure}[htbp]{0.33\textwidth}
      \includegraphics[width=0.97\textwidth, keepaspectratio]{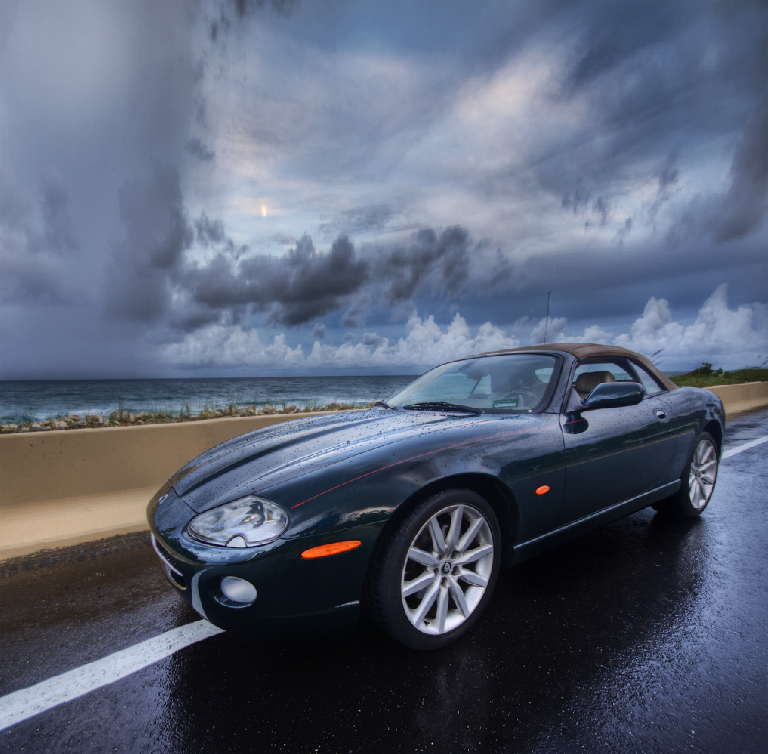}
      \caption{BR\cite{lau2018image}}
  \end{subfigure}%
  \begin{subfigure}[htbp]{0.33\textwidth}
    \includegraphics[width=0.97\textwidth, keepaspectratio]{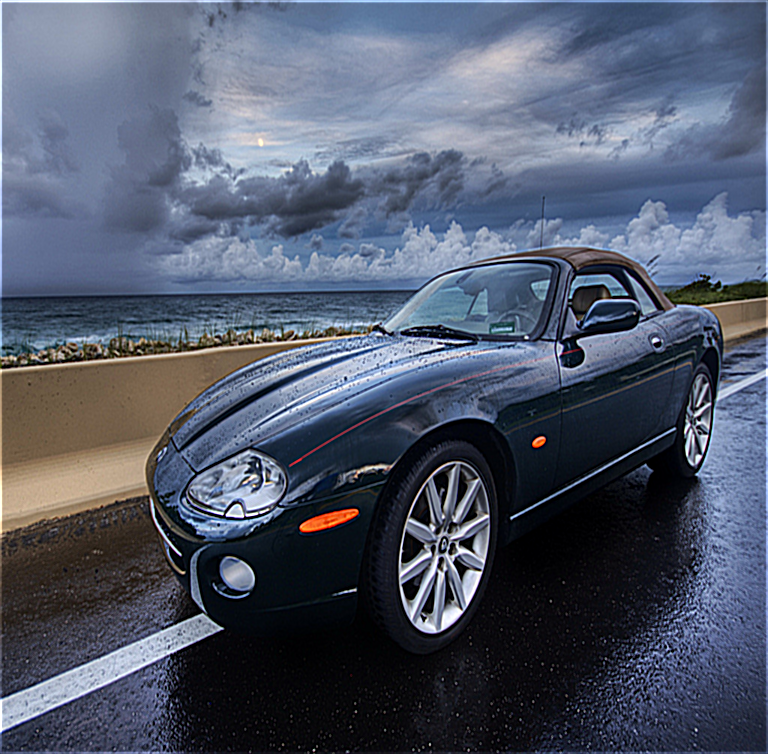}
    \caption{ML\cite{tu2023muller}}
\end{subfigure}%
  \begin{subfigure}[htbp]{0.33\textwidth}
      \includegraphics[width=0.97\textwidth, keepaspectratio]{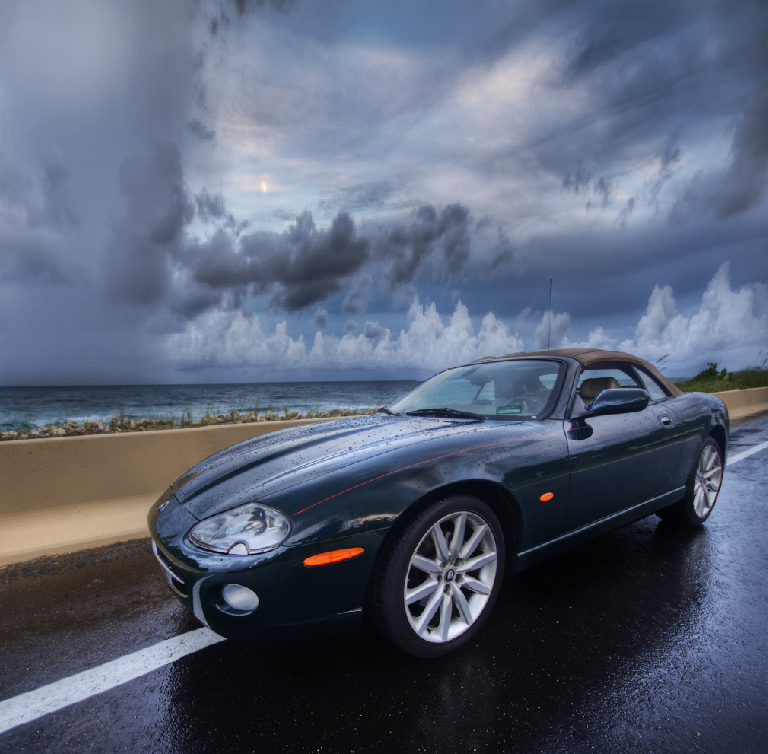}
      \caption{Our}
  \end{subfigure}%
\caption{Comparison of different methods to resize a car image to 75\% of the original width.}
\label{fig:example1}
\end{figure*}

\begin{figure*}[htbp]
    \centering
    % --- Row 1: House ---
    \makebox[\textwidth][c]{(a) House}\\
    \begin{subfigure}[htbp]{0.12\textwidth}
        \includegraphics[width=0.97\textwidth, keepaspectratio]{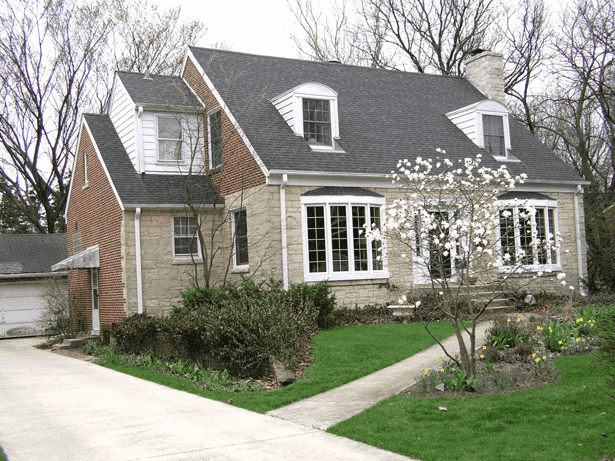}
    \end{subfigure}%
    \begin{subfigure}[htbp]{0.12\textwidth}
        \includegraphics[width=0.97\textwidth, keepaspectratio]{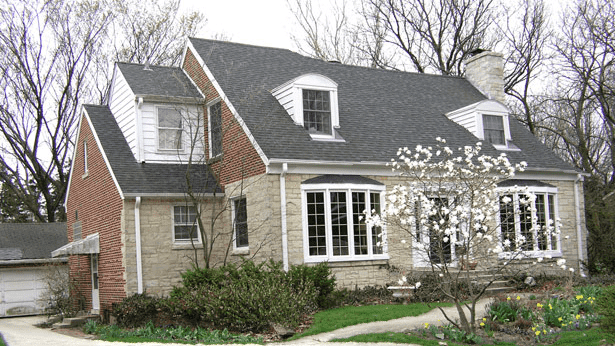}
    \end{subfigure}%
    \begin{subfigure}[htbp]{0.12\textwidth}
        \includegraphics[width=0.97\textwidth, keepaspectratio]{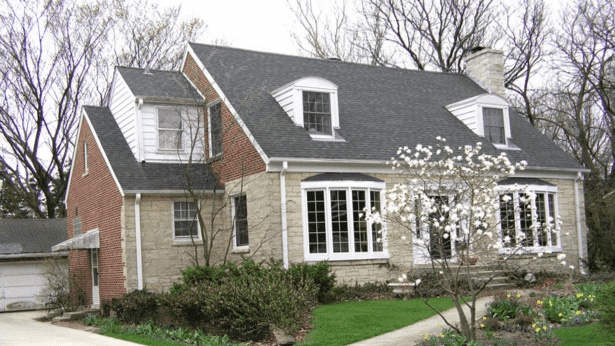}
    \end{subfigure}%
    \begin{subfigure}[htbp]{0.12\textwidth}
        \includegraphics[width=0.97\textwidth, keepaspectratio]{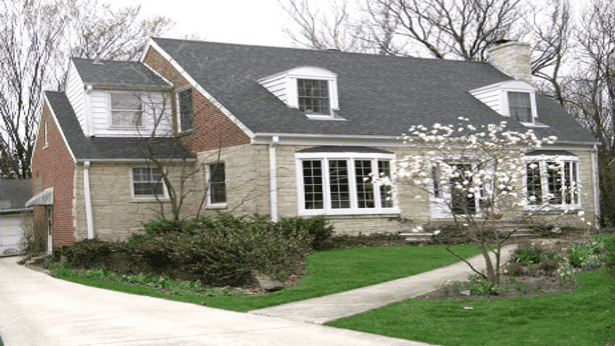}
    \end{subfigure}%
    \begin{subfigure}[htbp]{0.12\textwidth}
        \includegraphics[width=0.97\textwidth, keepaspectratio]{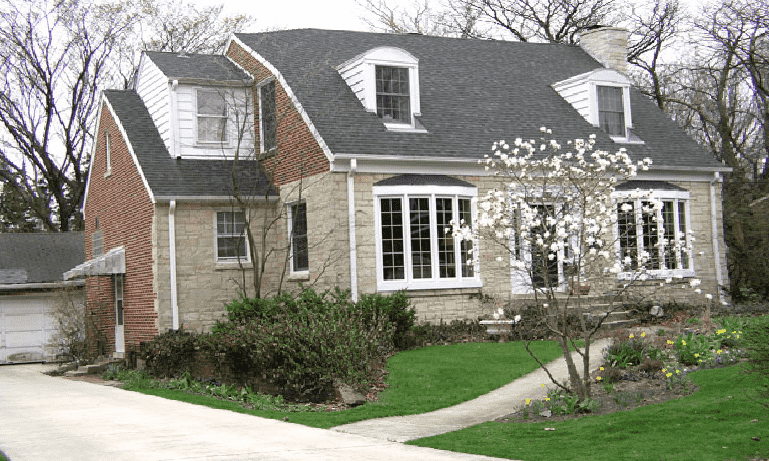}
    \end{subfigure}%
    \begin{subfigure}[htbp]{0.12\textwidth}
        \includegraphics[width=0.97\textwidth, keepaspectratio]{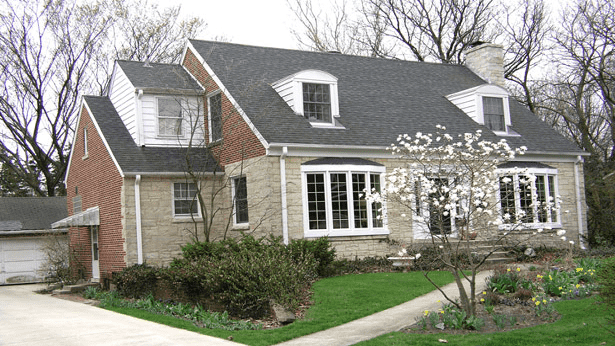}
    \end{subfigure}%
    \begin{subfigure}[htbp]{0.12\textwidth}
        \includegraphics[width=0.97\textwidth, keepaspectratio]{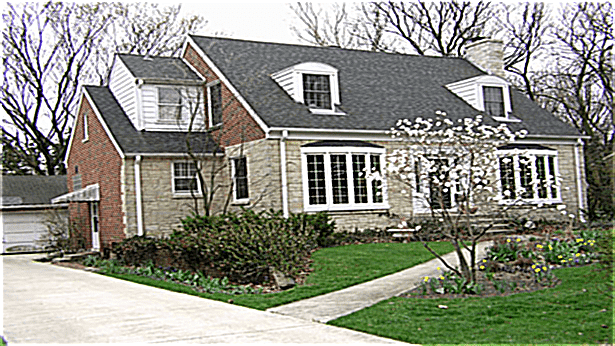}
    \end{subfigure}%
    \begin{subfigure}[htbp]{0.12\textwidth}
        \includegraphics[width=0.97\textwidth, keepaspectratio]{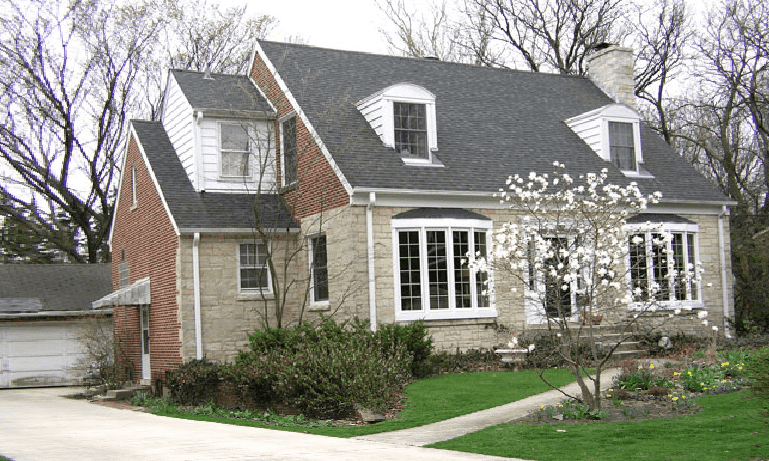}
    \end{subfigure}%
    \vspace{\smallskipamount} % 增加行间距

    % --- Row 2: Family ---
    \makebox[\textwidth][c]{(b) Family}\\
    \begin{subfigure}[htbp]{0.12\textwidth}
        \includegraphics[width=0.97\textwidth, keepaspectratio]{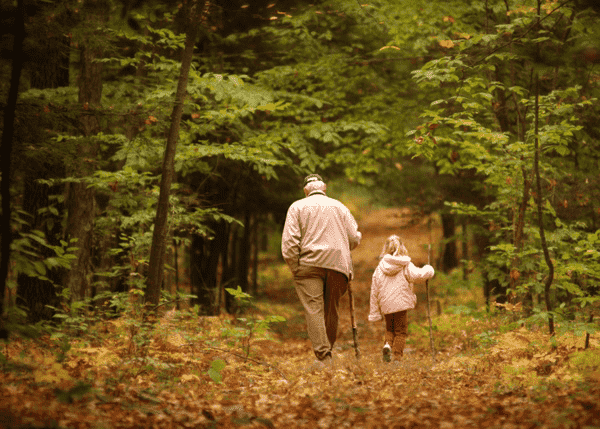}
    \end{subfigure}%
    \begin{subfigure}[htbp]{0.12\textwidth}
        \includegraphics[width=0.97\textwidth, keepaspectratio]{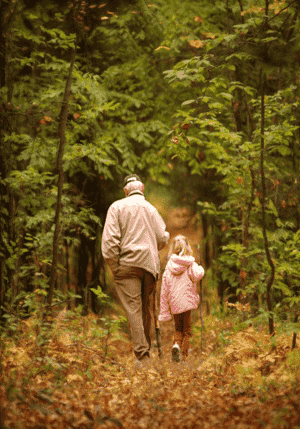}
    \end{subfigure}%
    \begin{subfigure}[htbp]{0.12\textwidth}
        \includegraphics[width=0.97\textwidth, keepaspectratio]{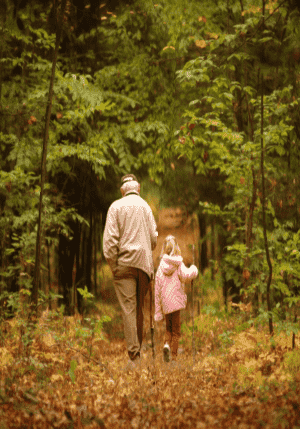}
    \end{subfigure}%
    \begin{subfigure}[htbp]{0.12\textwidth}
        \includegraphics[width=0.97\textwidth, keepaspectratio]{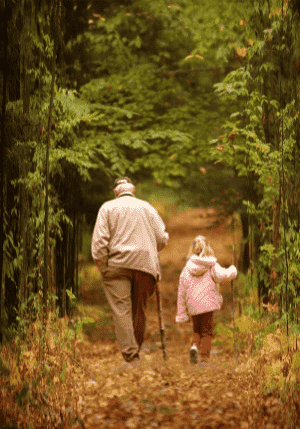}
    \end{subfigure}%
    \begin{subfigure}[htbp]{0.12\textwidth}
        \includegraphics[width=0.97\textwidth, keepaspectratio]{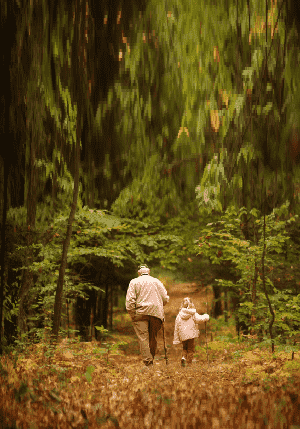}
    \end{subfigure}%
    \begin{subfigure}[htbp]{0.12\textwidth}
        \includegraphics[width=0.97\textwidth, keepaspectratio]{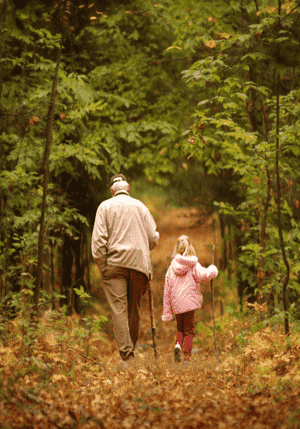}
    \end{subfigure}%
    \begin{subfigure}[htbp]{0.12\textwidth}
        \includegraphics[width=0.97\textwidth, keepaspectratio]{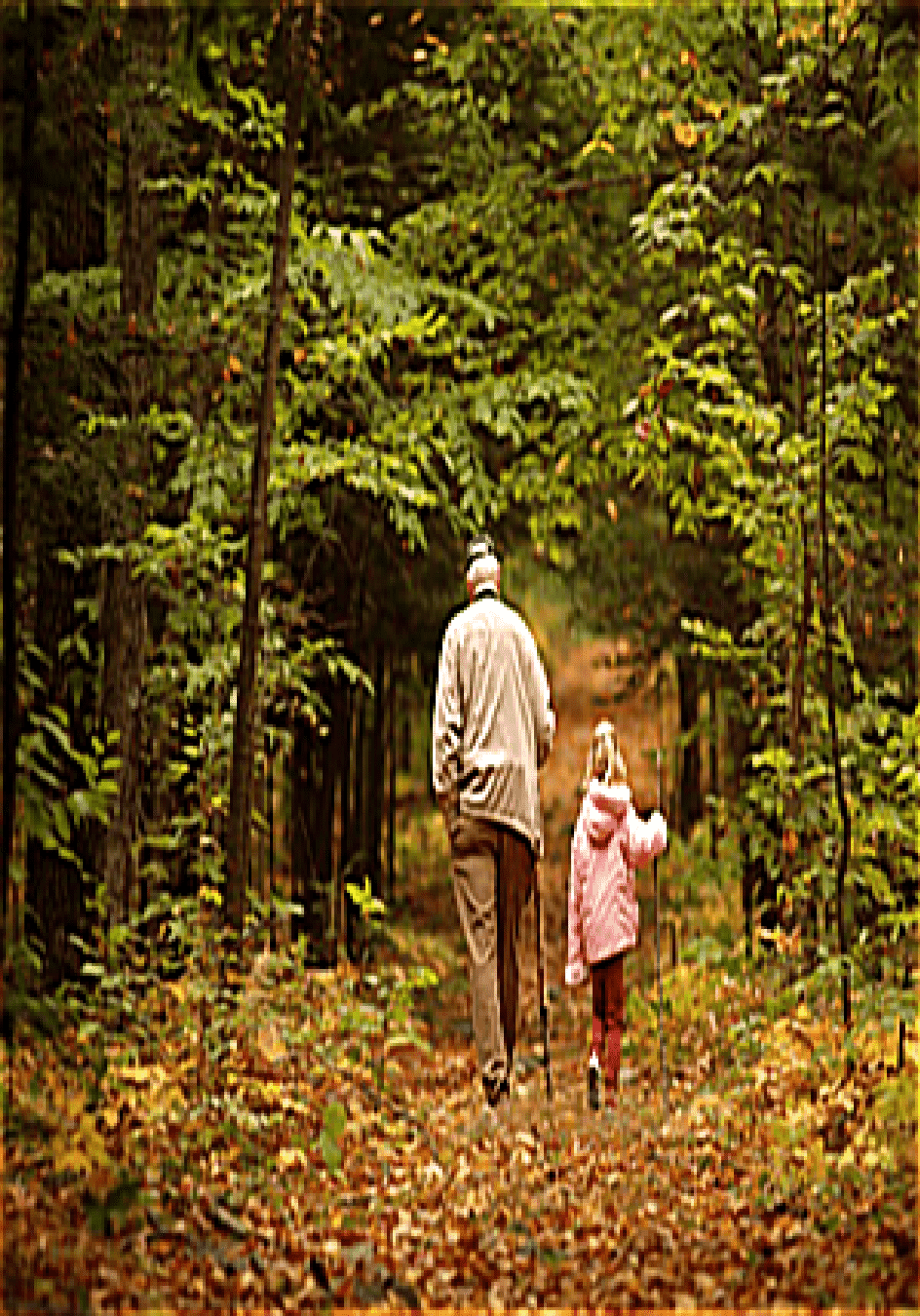}
    \end{subfigure}%
    \begin{subfigure}[htbp]{0.12\textwidth}
        \includegraphics[width=0.97\textwidth, keepaspectratio]{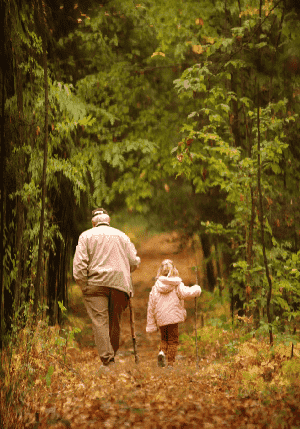}
    \end{subfigure}%
    \vspace{\smallskipamount} % 增加行间距

    % --- Row 3: Penguins ---
    \makebox[\textwidth][c]{(c) Penguins}\\
    \begin{subfigure}[htbp]{0.12\textwidth}
        \includegraphics[width=0.97\textwidth, keepaspectratio]{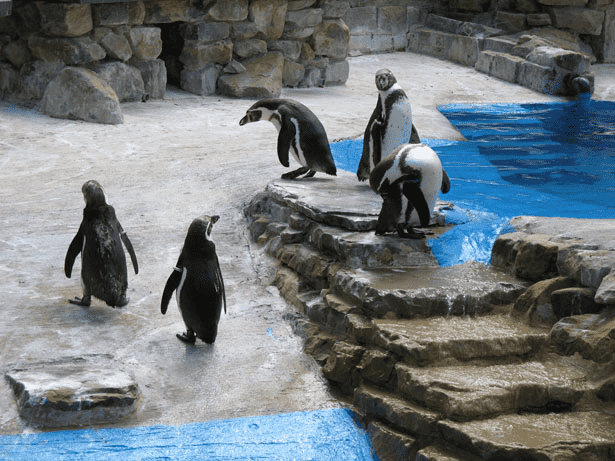}
    \end{subfigure}%
    \begin{subfigure}[htbp]{0.12\textwidth}
        \includegraphics[width=0.97\textwidth, keepaspectratio]{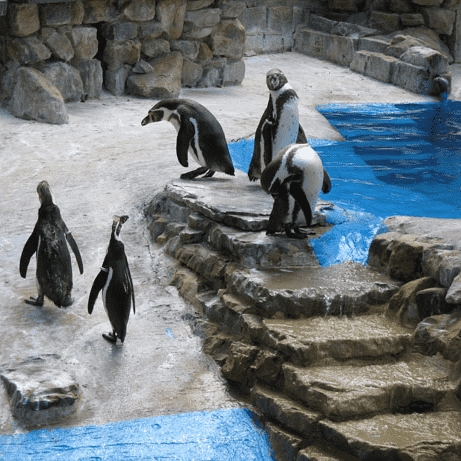}
    \end{subfigure}%
    \begin{subfigure}[htbp]{0.12\textwidth}
        \includegraphics[width=0.97\textwidth, keepaspectratio]{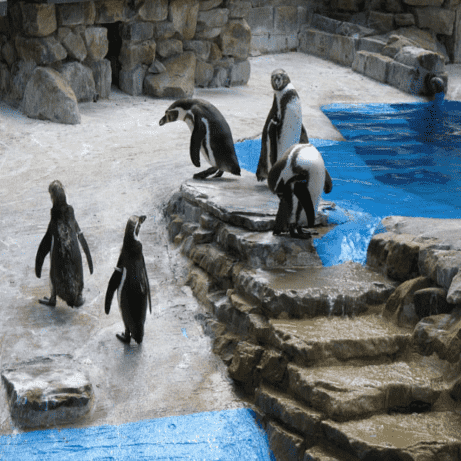}
    \end{subfigure}%
    \begin{subfigure}[htbp]{0.12\textwidth}
        \includegraphics[width=0.97\textwidth, keepaspectratio]{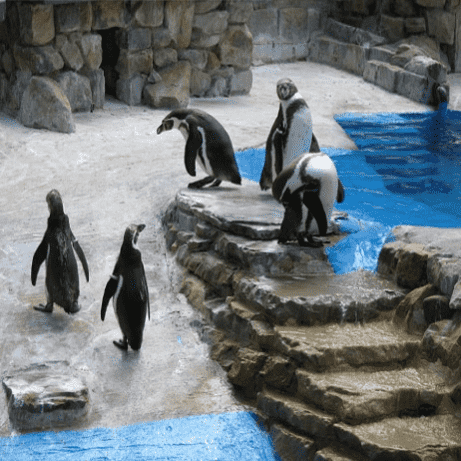}
    \end{subfigure}%
    \begin{subfigure}[htbp]{0.12\textwidth}
        \includegraphics[width=0.97\textwidth, keepaspectratio]{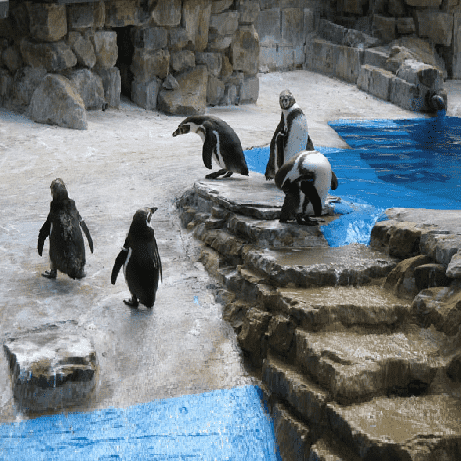}
    \end{subfigure}%
    \begin{subfigure}[htbp]{0.12\textwidth}
        \includegraphics[width=0.97\textwidth, keepaspectratio]{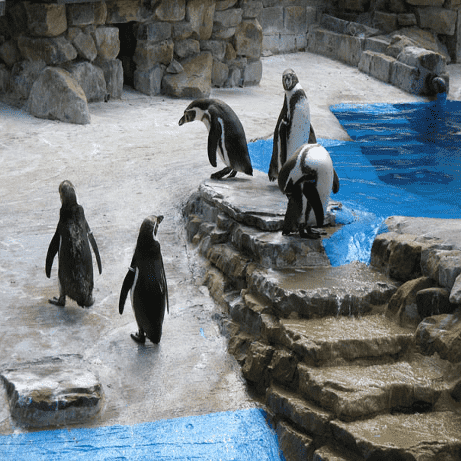}
    \end{subfigure}%
    \begin{subfigure}[htbp]{0.12\textwidth}
        \includegraphics[width=0.97\textwidth, keepaspectratio]{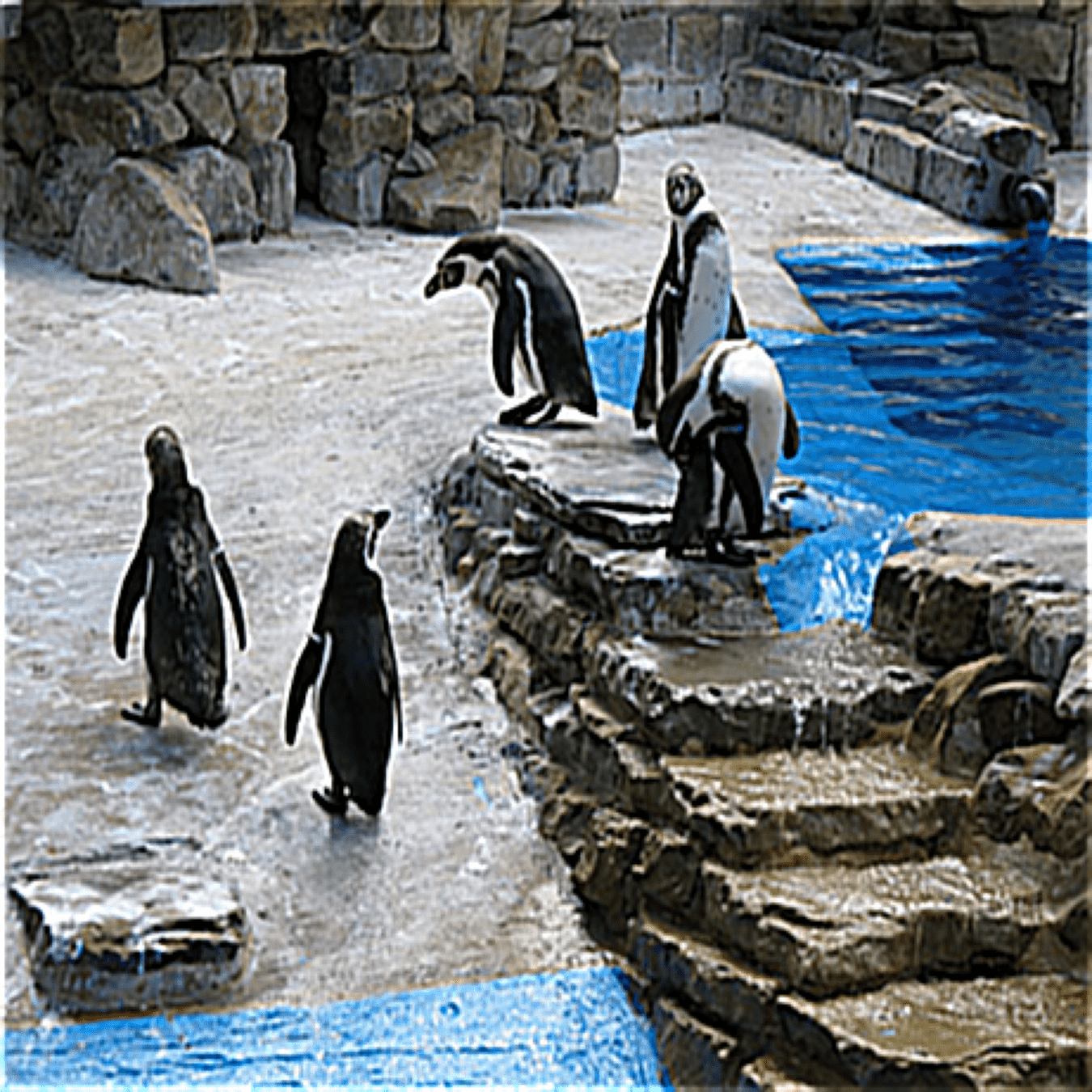}
    \end{subfigure}%
    \begin{subfigure}[htbp]{0.12\textwidth}
        \includegraphics[width=0.97\textwidth, keepaspectratio]{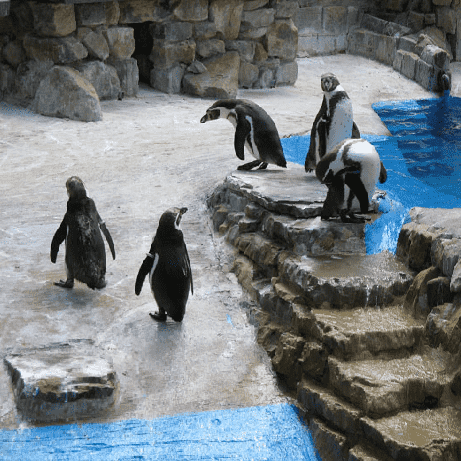}
    \end{subfigure}%
    \vspace{\smallskipamount} % 增加行间距

    \makebox[\textwidth][c]{(d) San Francisco}\\
    \begin{subfigure}[htbp]{0.12\textwidth}
        \includegraphics[width=0.97\textwidth, keepaspectratio]{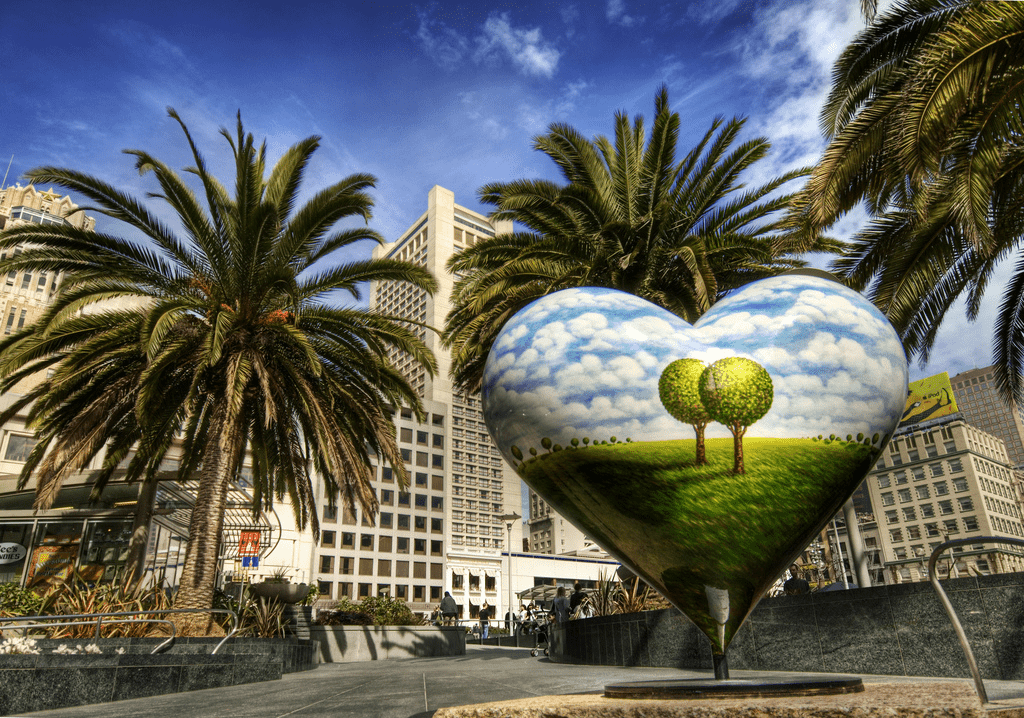}
    \end{subfigure}%
    \begin{subfigure}[htbp]{0.12\textwidth}
        \includegraphics[width=0.97\textwidth, keepaspectratio]{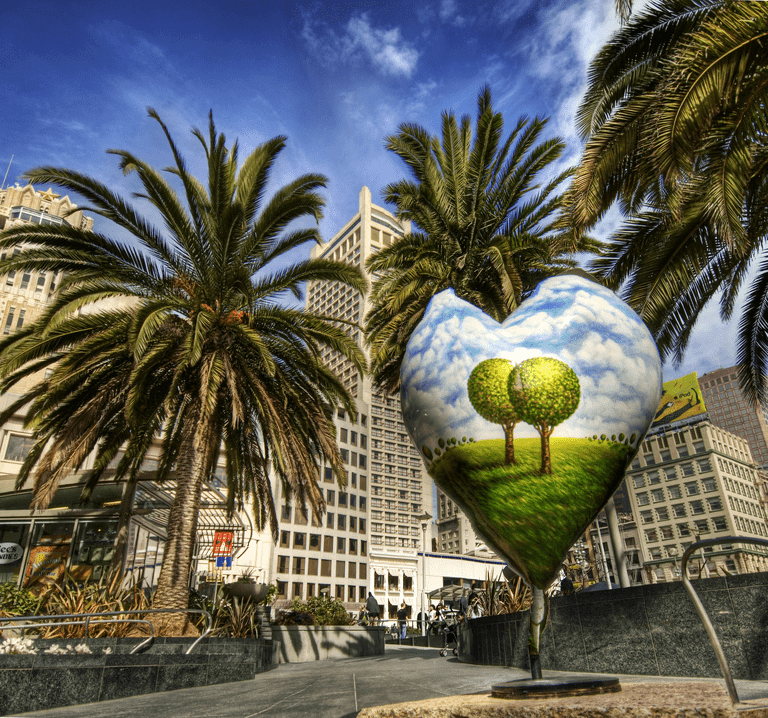}
    \end{subfigure}%
    \begin{subfigure}[htbp]{0.12\textwidth}
        \includegraphics[width=0.97\textwidth, keepaspectratio]{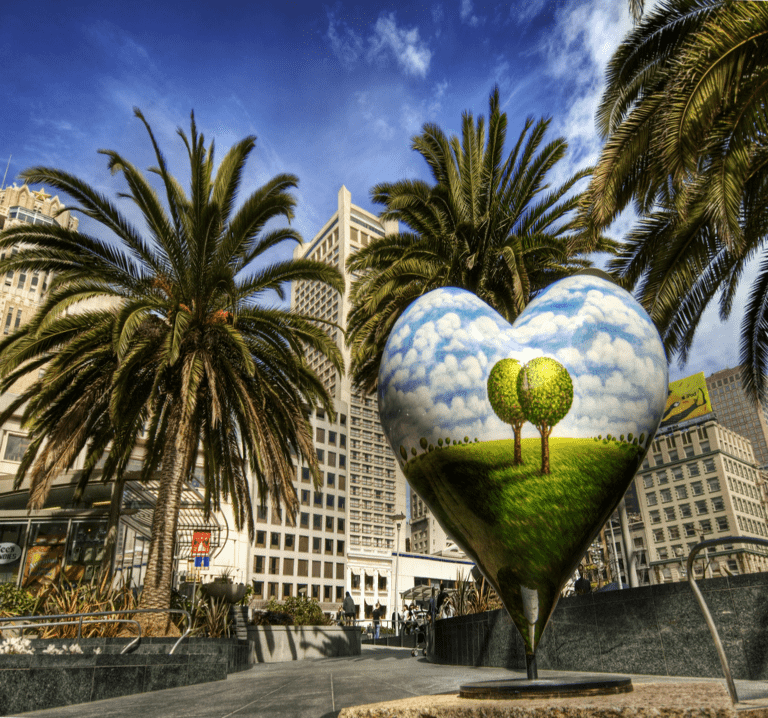}
    \end{subfigure}%
    \begin{subfigure}[htbp]{0.12\textwidth}
        \includegraphics[width=0.97\textwidth, keepaspectratio]{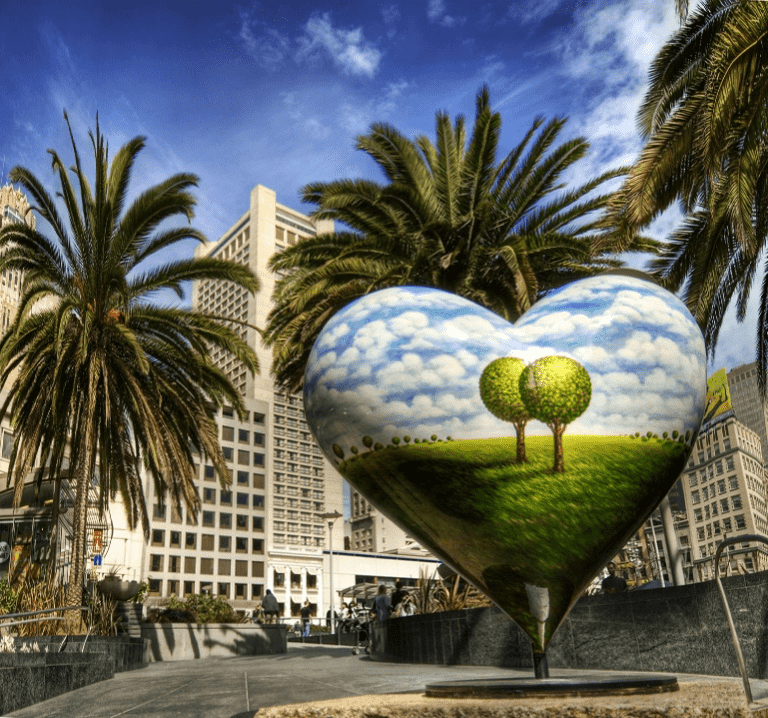}
    \end{subfigure}%
    \begin{subfigure}[htbp]{0.12\textwidth}
        \includegraphics[width=0.97\textwidth, keepaspectratio]{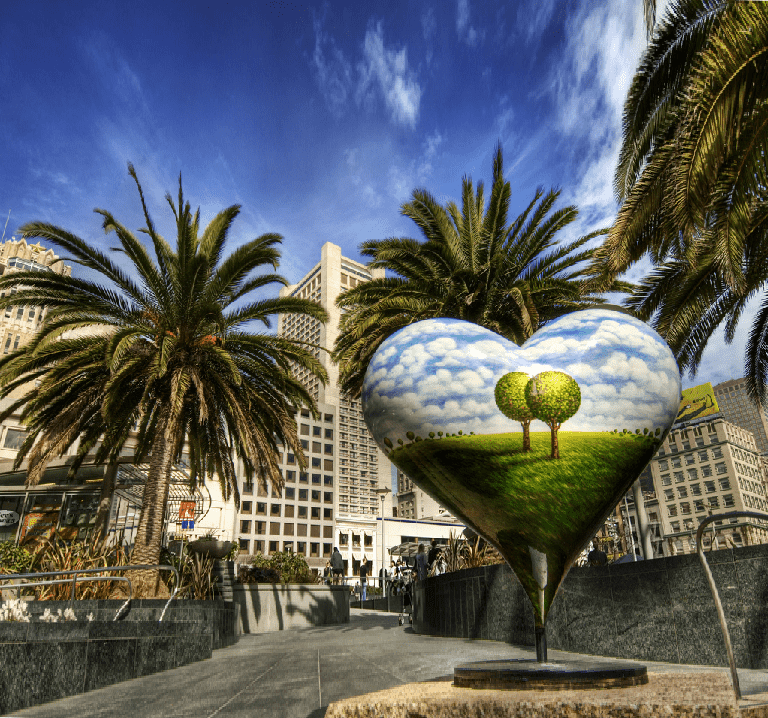}
    \end{subfigure}%
    \begin{subfigure}[htbp]{0.12\textwidth}
        \includegraphics[width=0.97\textwidth, keepaspectratio]{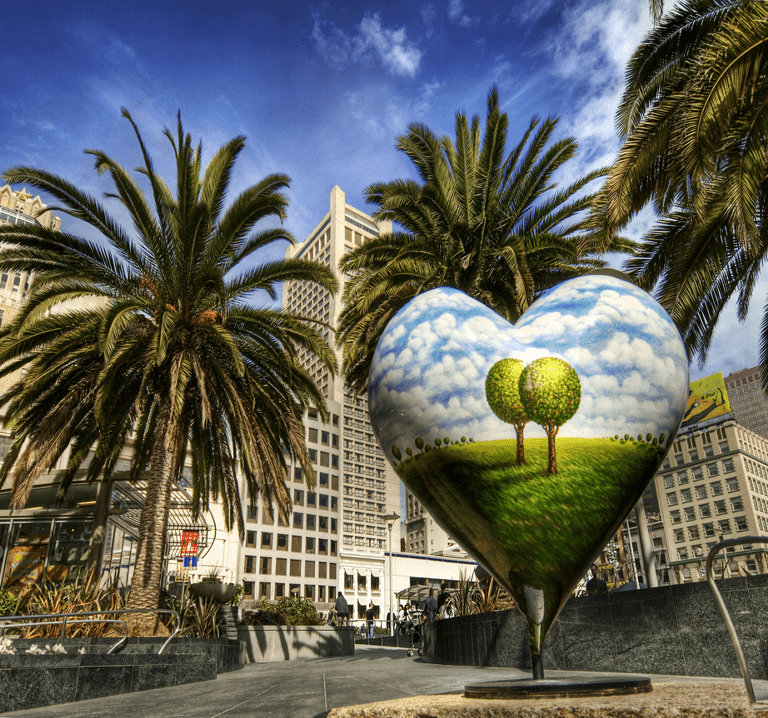}
    \end{subfigure}%
    \begin{subfigure}[htbp]{0.12\textwidth}
        \includegraphics[width=0.97\textwidth, keepaspectratio]{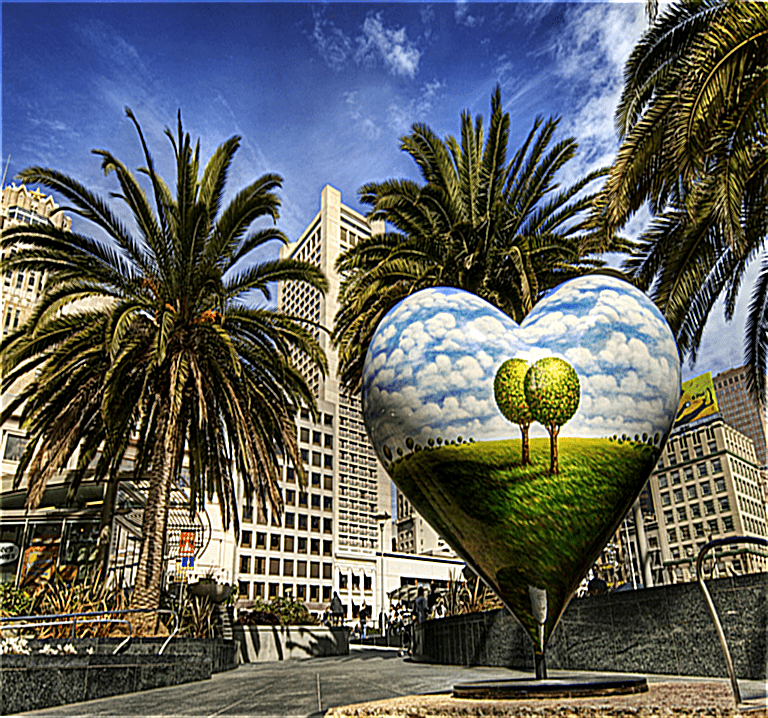}
    \end{subfigure}%
    \begin{subfigure}[htbp]{0.12\textwidth}
        \includegraphics[width=0.97\textwidth, keepaspectratio]{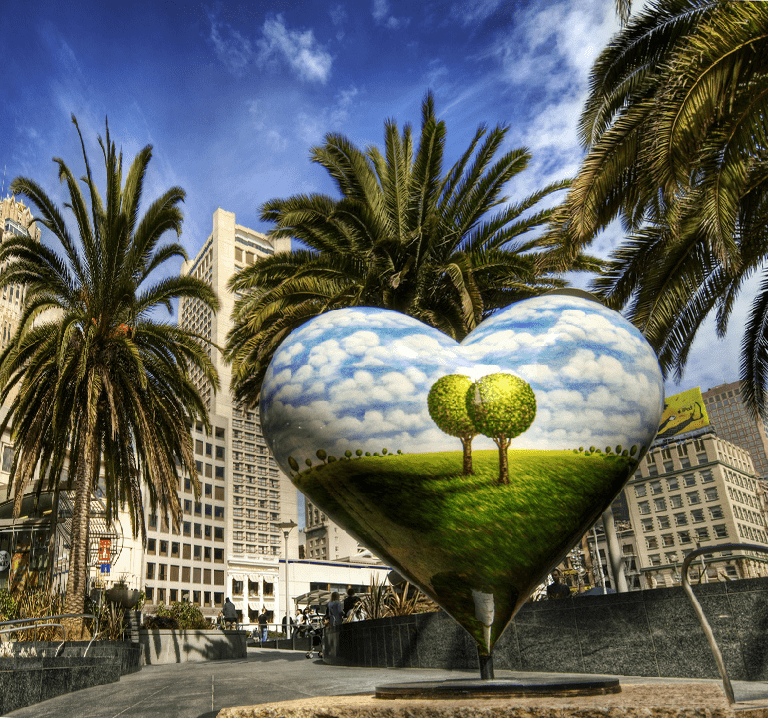}
    \end{subfigure}%
    \vspace{\smallskipamount} % 增加行间距

    % --- Row 4: Sailing ---
    \makebox[\textwidth][c]{(d) Sailing}\\
    \begin{subfigure}[htbp]{0.12\textwidth}
        \includegraphics[width=0.97\textwidth, keepaspectratio]{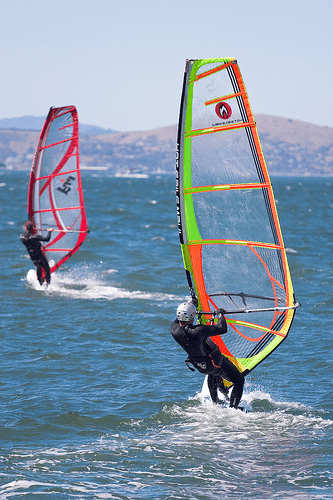}
    \end{subfigure}%
    \begin{subfigure}[htbp]{0.12\textwidth}
        \includegraphics[width=0.97\textwidth, keepaspectratio]{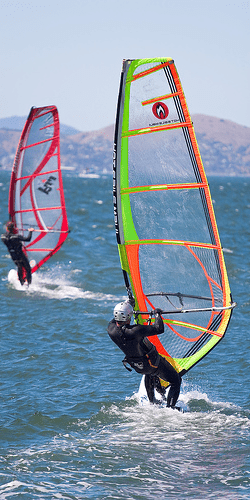}
    \end{subfigure}%
    \begin{subfigure}[htbp]{0.12\textwidth}
        \includegraphics[width=0.97\textwidth, keepaspectratio]{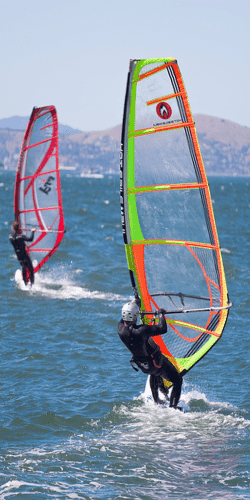}
    \end{subfigure}%
    \begin{subfigure}[htbp]{0.12\textwidth}
        \includegraphics[width=0.97\textwidth, keepaspectratio]{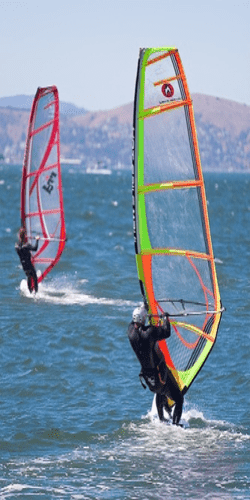}
    \end{subfigure}%
    \begin{subfigure}[htbp]{0.12\textwidth}
        \includegraphics[width=0.97\textwidth, keepaspectratio]{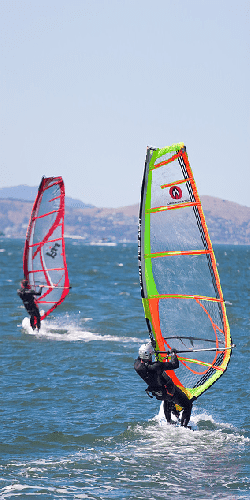}
    \end{subfigure}%
    \begin{subfigure}[htbp]{0.12\textwidth}
        \includegraphics[width=0.97\textwidth, keepaspectratio]{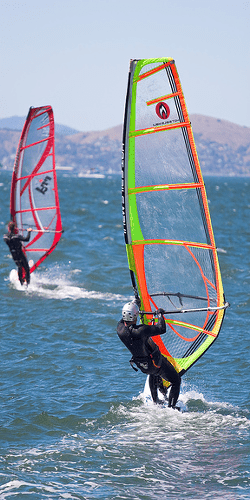}
    \end{subfigure}%
    \begin{subfigure}[htbp]{0.12\textwidth}
        \includegraphics[width=0.97\textwidth, keepaspectratio]{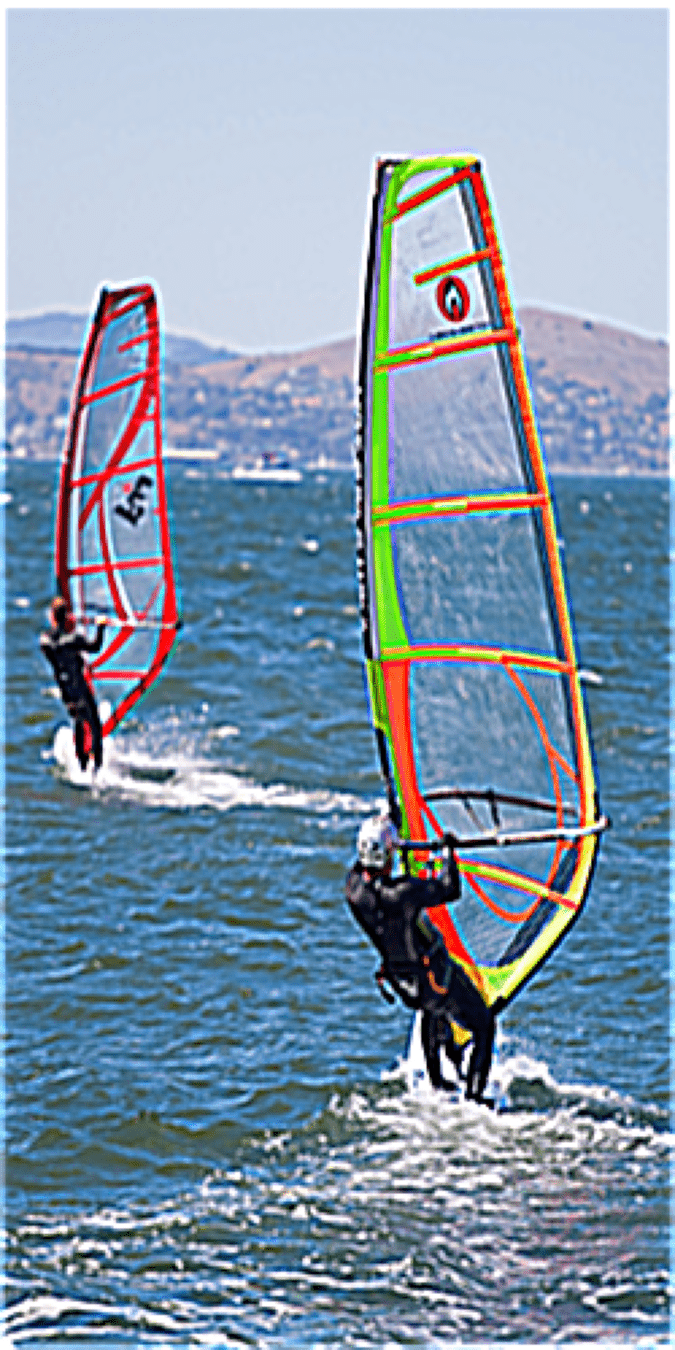}
    \end{subfigure}%
    \begin{subfigure}[htbp]{0.12\textwidth}
        \includegraphics[width=0.97\textwidth, keepaspectratio]{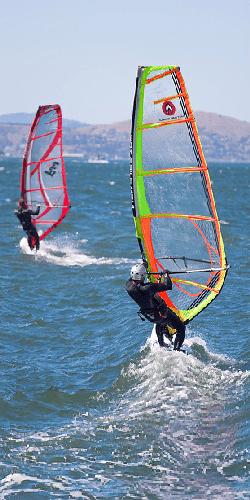}
    \end{subfigure}%
    \vspace{\smallskipamount} % 增加行间距

    % --- Row 5: Bridge ---
    \makebox[\textwidth][c]{(e) Bridge}\\
    \begin{subfigure}[htbp]{0.12\textwidth}
        \includegraphics[width=0.97\textwidth, keepaspectratio]{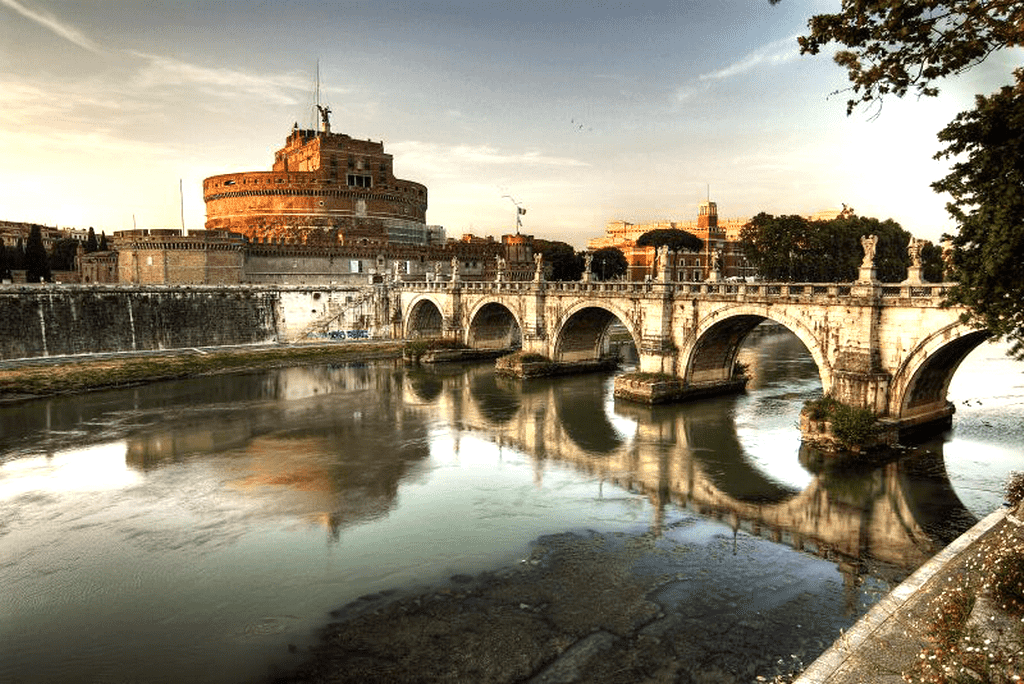}
        \caption*{OI}
    \end{subfigure}%
    \begin{subfigure}[htbp]{0.12\textwidth}
        \includegraphics[width=0.97\textwidth, keepaspectratio]{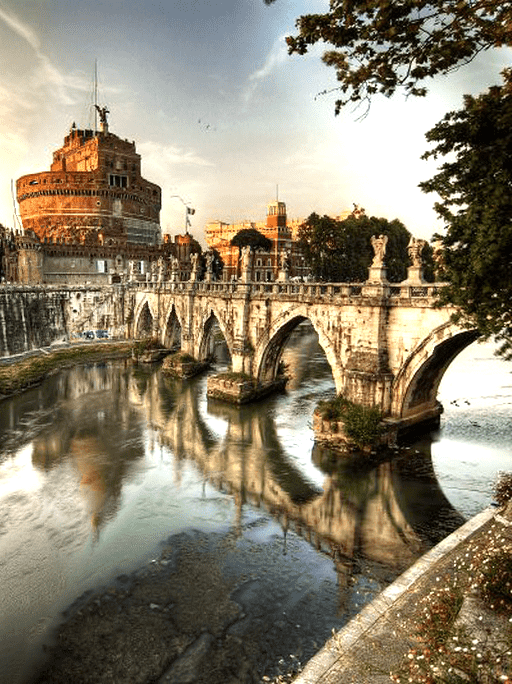}
        \caption*{SC\cite{rubinstein2008improved}}
    \end{subfigure}%
    \begin{subfigure}[htbp]{0.12\textwidth}
        \includegraphics[width=0.97\textwidth, keepaspectratio]{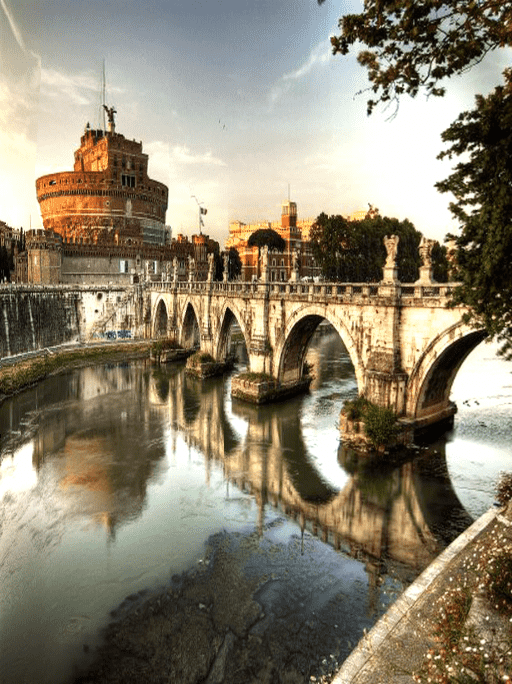}
        \caption*{Warp\cite{wolf2007nonhomogeneous}}
    \end{subfigure}%
    \begin{subfigure}[htbp]{0.12\textwidth}
        \includegraphics[width=0.97\textwidth, keepaspectratio]{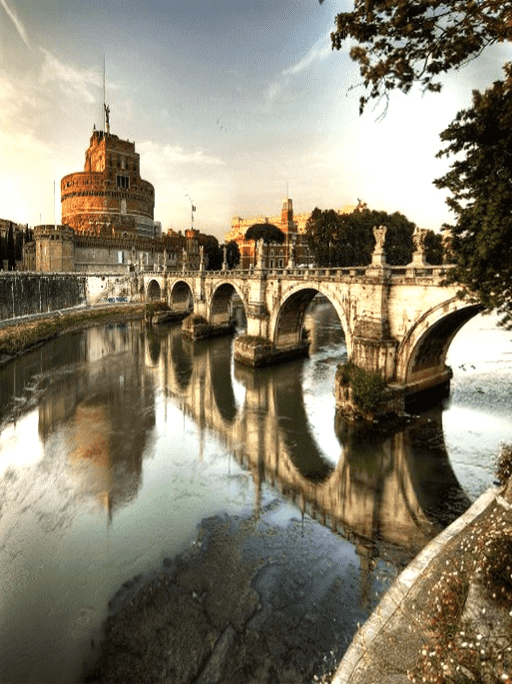}
        \caption*{QP\cite{chen2010content}}
    \end{subfigure}%
    \begin{subfigure}[htbp]{0.12\textwidth}
        \includegraphics[width=0.97\textwidth, keepaspectratio]{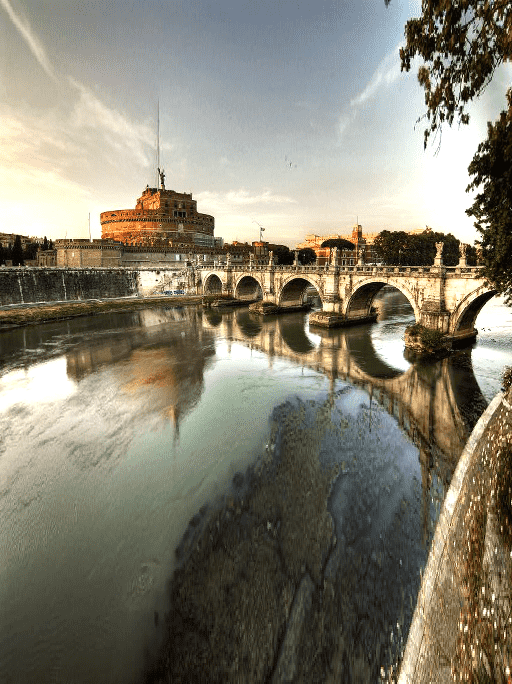}
        \caption*{BR\cite{lau2018image}}
    \end{subfigure}%
    \begin{subfigure}[htbp]{0.12\textwidth}
        \includegraphics[width=0.97\textwidth, keepaspectratio]{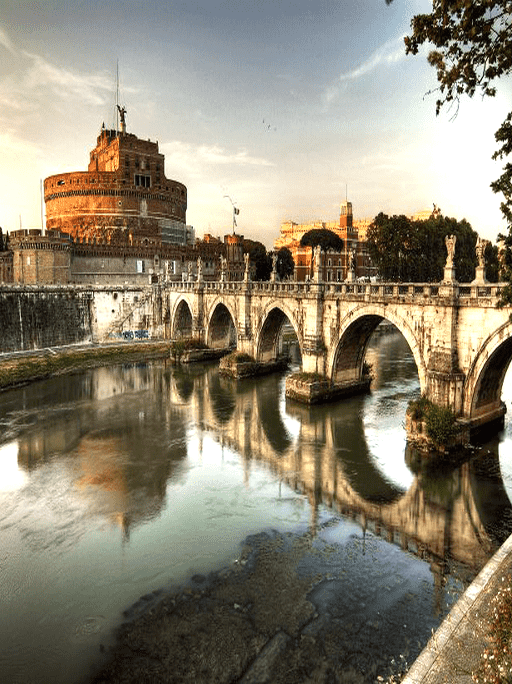}
        \caption*{MO\cite{rubinstein2009multioperator}}
    \end{subfigure}%
    \begin{subfigure}[htbp]{0.12\textwidth}
        \includegraphics[width=0.97\textwidth, keepaspectratio]{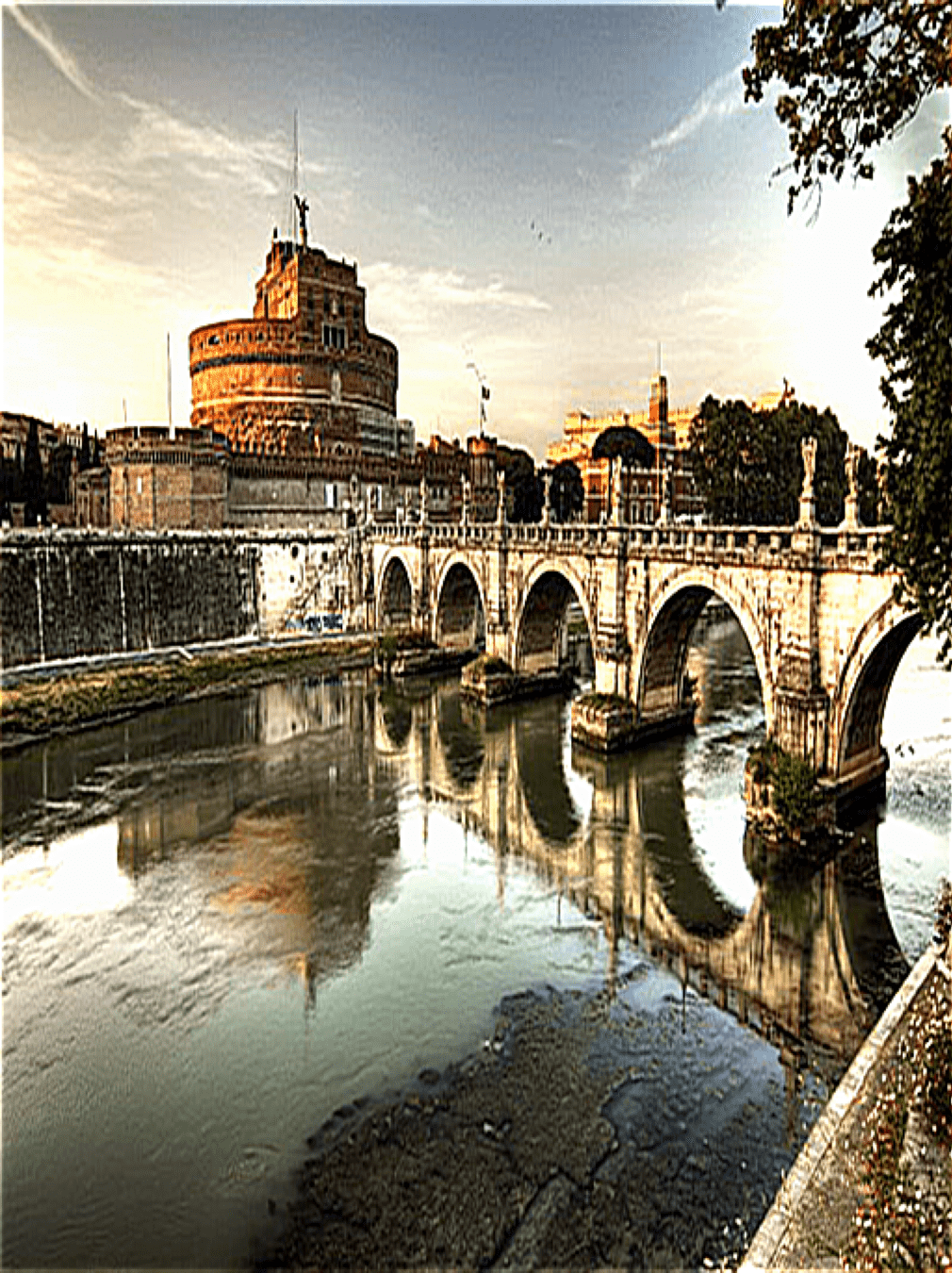}
        \caption*{ML\cite{tu2023muller}}
    \end{subfigure}%
    \begin{subfigure}[htbp]{0.12\textwidth}
        \includegraphics[width=0.97\textwidth, keepaspectratio]{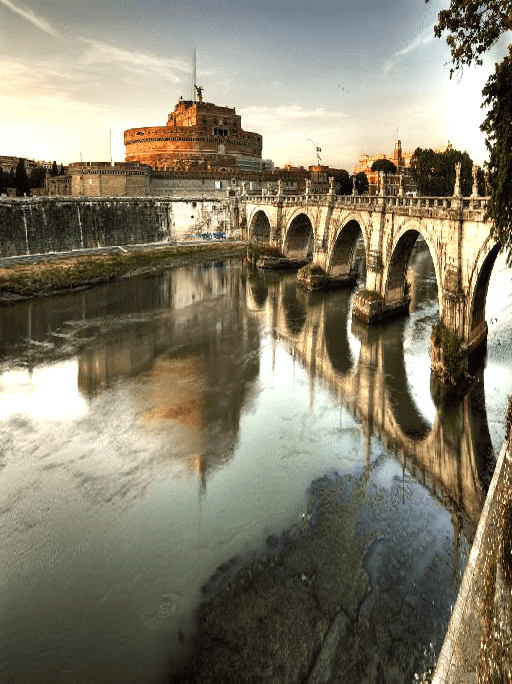}
        \caption*{Our model}
    \end{subfigure}%
    \caption{Comparison of well-known retargeting methods on $RetargetMe$.}
    \label{fig:dataset}
\end{figure*}
  We compare our automatic method with several retargeting algorithms, including seam-carving method (SC) \cite{rubinstein2008improved}, Nonhomogeneous warping (Warp) \cite{wolf2007nonhomogeneous}, image retargeting algorithm via Beltrami representation (BR) \cite{lau2018image}, and multilayer Laplacian resizer (ML) \cite{tu2023muller}. 
  OI represents the original image. 
The $r_{\mathcal{O}}, \mathbf{t}_{\mathcal{O}_i}, R_{\mathcal{L}_j}, \mathbf{t}_{\mathcal{L}_j}, g$ of our retargeting model in these experiments are all computed by (\ref{equ:roi}). 
In Figure \ref{fig:example1}, the SC model shows a noticeable distortion of line structures on the car (circled in red). 
For the Warp model, the white line on the ground is turned into curves (circled in red). 
The car in the results of the ML method looks longer than the original one.
While the car and line structures are well-preserved in the result of our model.

More experimental results are given in Figure \ref{fig:dataset}, in which QP represents Quadratic programming algorithm \cite{chen2010content}, and MO represents Multi-operator media retargeting algorithm \cite{rubinstein2009multioperator}. These results show that, compared with other algorithms, the shape of ROIs and line structures in the results of our model are preserved very well, e.g., the boy with snowman and the Bridge in Figure \ref{fig:dataset}, and the geometric distortion of the background part is also minimized. Since the retargeting mapping of our model is theoretically ensured bijective, all the details in the original images can also be found in the retargeted ones and the image overlapping issues are perfectly avoided.

The convergence of energy minimums during the subdivision process is also checked by experiments (Figure \ref{fig:sub1}). The vertices of the original mesh in Figure \ref{fig:sub1} are a $5\times 4$ lattice. The subdivision method is the corner-chopped subdivision (each simplex is divided into four congruent triangles). We can see that the conformal energy of the warping map is monotonically decreasing with respect to $i$. The energy of both experiments tends to converge. 
\begin{figure*}[htbp]
  \centering
  \caption{Resizing a clownfish image to 50\% of the original width.}
  \begin{subfigure}[htbp]{0.35\textwidth}
    \includegraphics[width=0.97\textwidth, keepaspectratio]{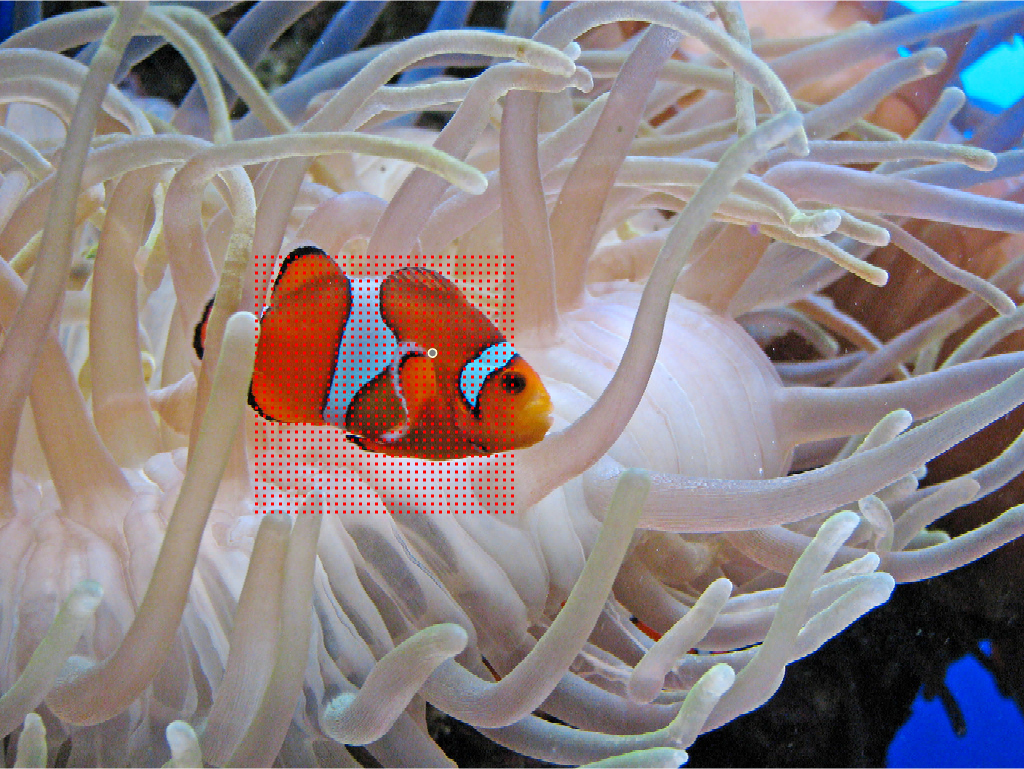}
    \caption{Original image (ROI in red)}
  \end{subfigure}%
  \begin{subfigure}[htbp]{0.65\textwidth}
    \includegraphics[width=0.97\textwidth, keepaspectratio]{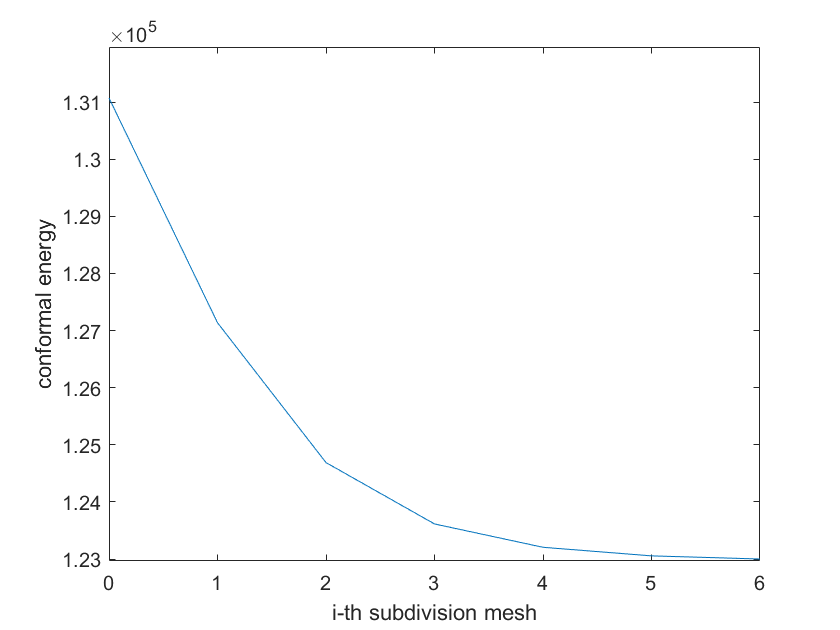}
    \caption{Conformal energy of the warping map on $i^{\text{th}}$ mesh}
  \end{subfigure}%
  \\
  \begin{subfigure}[htbp]{0.5\textwidth}
    \includegraphics[width=0.97\textwidth, keepaspectratio]{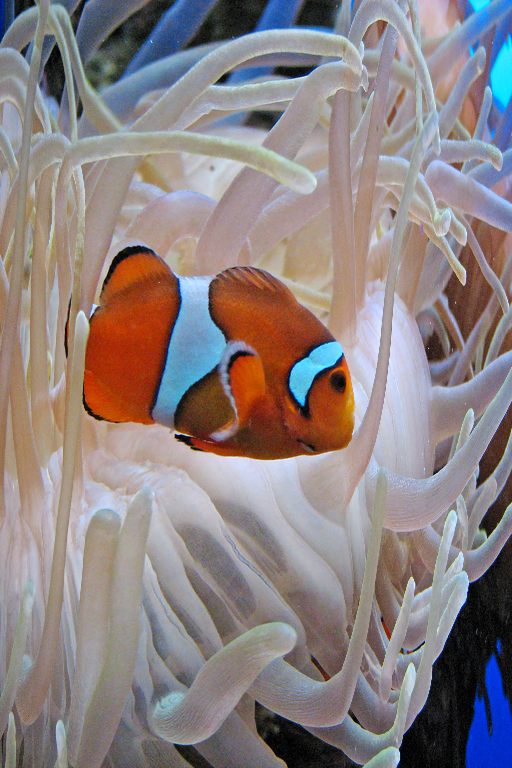}
    \caption{The result on the $0^{\text{th}}$ mesh}
    \end{subfigure}%
  \begin{subfigure}[htbp]{0.5\textwidth}
    \includegraphics[width=0.97\textwidth, keepaspectratio]{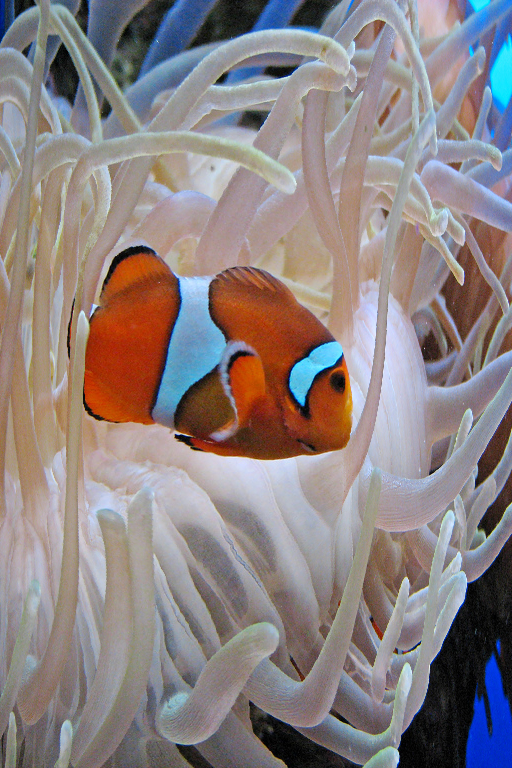}
    \caption{The result on the $6^{\text{th}}$ mesh}
  \end{subfigure}%

\label{fig:sub1}
\end{figure*}

\section{Conclusions}
In this paper, we propose a bijective image retargeting model by minimizing the conformal energy of the warping map, 
which can preserve ROIs and line structures in images. 
We also prove that our model is well-posed and the discrete energy minimizer in our model is a promising approximation of the optimal map if the mesh is dense enough.

Our proofs show the incredible capabilities of simplicial-mesh-based methods in image processing. Under some conditions, any mapping in the $H^1$ space can be well approximated by a simplicial mapping. This proposition will lay a solid theoretical foundation for various simplicial-mesh-based image processing methods.

In this paper, we have discussed the issues related to conformal energy. However, there is a gap in the simplicial mapping approximation of the quasi-conformal energy minimizer because the corresponding matrix of $L^a$ in this paper is not an M-matrix on some meshes.  
Therefore, it is meaningful to research what meshes can be applied to quasi-conformal energy-minimizing problems. Video retargeting is also popular these days. We may extend this algorithm to this problem in the future.

\bibliographystyle{siam}
\bibliography{references}

\begin{thebibliography}{10}

\bibitem{avidan2007seam}
{\sc S.~Avidan and A.~Shamir}, {\em Seam carving for content-aware image resizing}, ACM Transactions on Graphics (TOG), 26 (2007), pp.~10--es.

\bibitem{berman1994nonnegative}
{\sc A.~Berman and R.~J. Plemmons}, {\em Nonnegative matrices in the mathematical sciences}, SIAM, 1994.

\bibitem{chen2016automatic}
{\sc J.~Chen, G.~Bai, S.~Liang, and Z.~Li}, {\em Automatic image cropping: A computational complexity study}, in Proceedings of the IEEE conference on computer vision and pattern recognition, 2016, pp.~507--515.

\bibitem{chen2010content}
{\sc R.~Chen, D.~Freedman, Z.~Karni, C.~Gotsman, and L.~Liu}, {\em Content-aware image resizing by quadratic programming}, in 2010 IEEE Computer Society Conference on Computer Vision and Pattern Recognition-Workshops, IEEE, 2010, pp.~1--8.

\bibitem{conway2019course}
{\sc J.~B. Conway}, {\em A course in functional analysis}, vol.~96, Springer, 2019.

\bibitem{cormen2022introduction}
{\sc T.~H. Cormen, C.~E. Leiserson, R.~L. Rivest, and C.~Stein}, {\em Introduction to algorithms}, MIT press, 2022.

\bibitem{dehghani2023navit}
{\sc M.~Dehghani, B.~Mustafa, J.~Djolonga, M.~Minderer, M.~Caron, A.~Steiner, R.~Geirhos, I.~Alabdulmohsin, A.~Oliver, J.~Heek, J.~Puigcerver, P.~Padlewski, A.~Gritsenko, M.~Lu{\v{c}}i{\'c}, and N.~Houlsby}, {\em {Patch n' Pack: NaViT, a Vision Transformer for any Aspect Ratio and Resolution}}, arXiv preprint arXiv:2307.06304,  (2023).

\bibitem{evans2010partial}
{\sc L.~C. Evans}, {\em Partial Differential Equations}, vol.~19, American Mathematical Soc., 2010.

\bibitem{floater2003one}
{\sc M.~Floater}, {\em One-to-one piecewise linear mappings over triangulations}, Mathematics of Computation, 72 (2003), pp.~685--696.

\bibitem{garg2022improved}
{\sc A.~Garg, A.~Nayyar, and A.~K. Singh}, {\em Improved seam carving for structure preservation using efficient energy function}, Multimedia Tools and Applications, 81 (2022), pp.~12883--12924.

\bibitem{gilbert1986analysis}
{\sc J.~R. Gilbert and R.~E. Tarjan}, {\em The analysis of a nested dissection algorithm}, Numerische mathematik, 50 (1986), pp.~377--404.

\bibitem{gu2008computational}
{\sc X.~D. Gu and S.-T. Yau}, {\em Computational conformal geometry}, vol.~1, International Press Somerville, MA, 2008.

\bibitem{harel2006graph}
{\sc J.~Harel, C.~Koch, and P.~Perona}, {\em Graph-based visual saliency}, Advances in neural information processing systems, 19 (2006).

\bibitem{itti1998model}
{\sc L.~Itti, C.~Koch, and E.~Niebur}, {\em A model of saliency-based visual attention for rapid scene analysis}, IEEE Transactions on pattern analysis and machine intelligence, 20 (1998), pp.~1254--1259.

\bibitem{kim2018quad}
{\sc Y.~Kim, H.~Eun, C.~Jung, and C.~Kim}, {\em A quad edge-based grid encoding model for content-aware image retargeting}, IEEE transactions on visualization and computer graphics, 25 (2018), pp.~3202--3215.

\bibitem{lau2018image}
{\sc C.~P. Lau, C.~P. Yung, and L.~M. Lui}, {\em Image retargeting via beltrami representation}, IEEE Transactions on Image Processing, 27 (2018), pp.~5787--5801.

\bibitem{lin2019deepir}
{\sc J.~Lin, T.~Zhou, and Z.~Chen}, {\em {DeepIR: A Deep Semantics Driven Framework for Image Retargeting}}, arXiv preprint arXiv:1811.07793,  (2019).

\bibitem{lipman2014bijective}
{\sc Y.~Lipman}, {\em Bijective mappings of meshes with boundary and the degree in mesh processing}, SIAM Journal on Imaging Sciences, 7 (2014), pp.~1263--1283.

\bibitem{molitierno2016applications}
{\sc J.~J. Molitierno}, {\em Applications of combinatorial matrix theory to Laplacian matrices of graphs}, CRC Press, 2016.

\bibitem{mukhopadhyay2015survey}
{\sc P.~Mukhopadhyay and B.~B. Chaudhuri}, {\em A survey of hough transform}, Pattern Recognition, 48 (2015), pp.~993--1010.

\bibitem{niu2012image}
{\sc Y.~Niu, F.~Liu, X.~Li, and M.~Gleicher}, {\em Image resizing via non-homogeneous warping}, Multimedia Tools and Applications, 56 (2012), pp.~485--508.

\bibitem{ravi2024sam}
{\sc N.~Ravi, V.~Gabeur, Y.-T. Hu, R.~Hu, C.~Ryali, T.~Ma, H.~Khedr, R.~R{\"a}dle, C.~Rolland, L.~Gustafson, et~al.}, {\em Sam 2: Segment anything in images and videos}, arXiv preprint arXiv:2408.00714,  (2024).

\bibitem{razzaghi2015image}
{\sc P.~Razzaghi and S.~Samavi}, {\em Image retargeting using nonparametric semantic segmentation}, Multimedia Tools and Applications, 74 (2015), pp.~11517--11536.

\bibitem{ren2009image}
{\sc T.~Ren, Y.~Liu, and G.~Wu}, {\em Image retargeting using multi-map constrained region warping}, in Proceedings of the 17th ACM international conference on Multimedia, 2009, pp.~853--856.

\bibitem{rourke2012introduction}
{\sc C.~P. Rourke and B.~J. Sanderson}, {\em Introduction to piecewise-linear topology}, Springer Science \& Business Media, 2012.

\bibitem{rubinstein2010comparative}
{\sc M.~Rubinstein, D.~Gutierrez, O.~Sorkine, and A.~Shamir}, {\em A comparative study of image retargeting}, ACM Transactions on Graphics (TOG), 29 (2010), pp.~1--10.

\bibitem{rubinstein2008improved}
{\sc M.~Rubinstein, A.~Shamir, and S.~Avidan}, {\em Improved seam carving for video retargeting}, ACM transactions on graphics (TOG), 27 (2008), pp.~1--9.

\bibitem{rubinstein2009multioperator}
\leavevmode\vrule height 2pt depth -1.6pt width 23pt, {\em Multi-operator media retargeting}, ACM Transactions on Graphics (TOG), 28 (2009), pp.~1--11.

\bibitem{shocher2019ingan}
{\sc A.~Shocher, S.~Bagon, P.~Isola, and M.~Irani}, {\em Ingan: Capturing and retargeting the" dna" of a natural image}, in Proceedings of the IEEE/CVF international conference on computer vision, 2019, pp.~4492--4501.

\bibitem{tu2023muller}
{\sc Z.~Tu, P.~Milanfar, and H.~Talebi}, {\em Muller: Multilayer laplacian resizer for vision}, ICCV,  (2023).

\bibitem{van2014approximation}
{\sc J.~Van~Schaftingen}, {\em Approximation in sobolev spaces by piecewise affine interpolation}, Journal of Mathematical Analysis and Applications, 420 (2014), pp.~40--47.

\bibitem{wolf2007nonhomogeneous}
{\sc L.~Wolf, M.~Guttmann, and D.~Cohen-Or}, {\em Non-homogeneous content-driven video-retargeting}, in 2007 IEEE 11th International Conference on Computer Vision, IEEE, 2007, pp.~1--6.

\bibitem{xu2018content}
{\sc J.~Xu, H.~Kang, and F.~Chen}, {\em Content-aware image resizing using quasi-conformal mapping}, The Visual Computer, 34 (2018), pp.~431--442.

\bibitem{yueh2017efficient}
{\sc M.-H. Yueh, W.-W. Lin, C.-T. Wu, and S.-T. Yau}, {\em An efficient energy minimization for conformal parameterizations}, Journal of Scientific Computing, 73 (2017), pp.~203--227.

\bibitem{zhou2019end}
{\sc Y.~Zhou, H.~Qi, and Y.~Ma}, {\em End-to-end wireframe parsing}, in Proceedings of the IEEE/CVF International Conference on Computer Vision, 2019, pp.~962--971.

\end{thebibliography}

\end{document}